\newtheorem{theorem}{Theorem}[section]
\newtheorem{lemma}[theorem]{Lemma}
\newtheorem{lem}[theorem]{Lemma}
\newtheorem{proposition}[theorem]{Proposition}
\newtheorem{corollary}[theorem]{Corollary}
\newtheorem{cor}[theorem]{Corollary}
\newtheorem{definition}[theorem]{Definition}
\newtheorem{example}[theorem]{Example}
\newtheorem{remark}[theorem]{Remark}
\newtheorem{Example}[theorem]{Example}
\newtheorem{Theorem}[theorem]{Theorem}
\newcommand{\be}{\begin{equation}}
\newcommand{\ee}{\end{equation}}
\newcommand{\beq}{\begin{equation*}}
\newcommand{\eeq}{\end{equation*}}
\newcommand{\enq}{\end{equation}}
\newcommand{\ben}{\begin{eqnarray}}
\newcommand{\een}{\end{eqnarray}}
\newcommand{\bea}{\begin{eqnarray*}}
\newcommand{\eea}{\end{eqnarray*}}
\newcommand{\At}{ {\widetilde{A}}}
\def\Gammat{{\widetilde{\Gamma}}}
\newcommand{\Hc}{ {\mathcal{H}}}
\def\cH{{\mathcal H}}
\newcommand{\cL}{ {\mathcal{L}}}
\newcommand{\cC}{ {\mathcal{C}}}
\newcommand{\cA}{{\mathcal{A}}}
\newcommand{\zb}{\overline{z}}
\newcommand{\Et}{ {\widetilde{E}}}
\newcommand{\clos}{\mbox{\rm clos}}
\def\ker{{\mathrm{ker\,}}}
\def\Ran{{\mathrm{Ran\,}}}
\def\rank{{\mathrm{rank\,}}}
\def\Span{{\mathrm{Span\,}}}
\newcommand{\Cc}{{\mathbb{C}}}
\newcommand{\la}{\lambda}
\newcommand{\mut}{\widetilde{\mu}}
\newcommand{\mub}{\overline{\mu}}
\newcommand{\mutb}{\overline{\widetilde{\mu}}}
\newcommand{\llangle}{\left\langle}
\newcommand{\rrangle}{\right\rangle}
\def\C{\mathbb C}
\def\R{\mathbb R}
\def\N{\mathbb N}
\newcommand{\norm}[1]{\left\Vert#1\right\Vert}
\newcommand{\twovec}[2]{\left(\begin{array}{c} #1 \\  #2 \end{array}\right)}
\newcommand{\U}{\left(\begin{array}{c} v_{-} \\ u \\ v_{+} \end{array}\right)}
\newcommand{\Uzero}{\left(\begin{array}{c} v_{-} \\ 0 \\ v_{+} \end{array}\right)}
\newcommand{\Ut}{\left(\begin{array}{c} \widetilde{v_{-}} \\ \widetilde{u} \\ \widetilde{v_{+}} \end{array}\right)}
\newcommand{\W}{\left(\begin{array}{c} f \\ w \\ g \end{array}\right)}
\newcommand{\Hvec}{\left(\begin{array}{c} h_{-} \\ h_0 \\ h_{+} \end{array}\right)}
\newcommand{\Hzero}{\left(\begin{array}{c} h_{-} \\ 0 \\ h_{+} \end{array}\right)}
\numberwithin{equation}{section}
\def\C{\mathbb C}
\def\R{\mathbb R}
\def\N{\mathbb N}
\def\H{\mathcal H}
\def\L{\mathcal L}
\newcommand{\twomat}[4]{\left(\begin{array}{cc} #1 & #2 \\  #3 & #4 \end{array}\right)}
\newcommand{\threemat}[9]{\left(\begin{array}{ccc} #1 & #2 &  #3 \\ #4 & #5 & #6\\ #7 & #8 &#9 \end{array}\right)}
\title[Functional model]{The functional model for maximal dissipative operators:\\ An approach in the spirit of {operator knots}}
\author[Brown]{Malcolm Brown}
\address{School of Computer Science, Cardiff University, Queen's Buildings, 5 The Parade, Cardiff CF24 3AA, UK}
\email{Malcolm.Brown@cs.cardiff.ac.uk}
\author[Marletta]{Marco Marletta}
\address{School of Mathematics, Cardiff University, Senghennydd Road, Cardiff CF24 4AG, UK}
\email{MarlettaM@cardiff.ac.uk}
\author[Naboko]{Serguei Naboko}
\address{Department of Math.~Physics, Institute of Physics, St.~Petersburg State University, 1 Ulianovskaia, St.~Petergoff, St.~Petersburg, 198504, Russia}
\email{sergey.naboko@gmail.com}
\author[Wood]{Ian Wood}
\address{School of Mathematics, Statistics and Actuarial Sciences,
 University of Kent, Canterbury, CT2 7FS, UK}
\email{i.wood@kent.ac.uk}
\thanks{{
M.~Marletta and S.N.~Naboko gratefully acknowledge the support of the Leverhulme
Trust, grant RPG167, and of the Wales Institute of Mathematical
and Computational Sciences. S.N.~Naboko also was partially supported by the grant   RFBR 16-11-00443a, as well as the EC Marie Curie grant PIIF-GA-2011-299919.
}}
\begin{document}
\begin{abstract}{
In this article we develop a functional model for a general maximal dissipative operator. We construct the selfadjoint dilation of such operators. Unlike previous functional models, our model is given explicitly in terms of parameters of the original operator, making it more useful in concrete applications. }

{
 For our construction we introduce an abstract framework for working with a maximal dissipative operator and its anti-dissipative adjoint and make use of the \v{S}traus characteristic function  in our setting. Explicit formulae are given for the selfadjoint dilation, its resolvent, a core and the completely non-selfadjoint subspace; minimality of the dilation is shown. The abstract theory is illustrated by the example of a Schr\"odinger operator on a half-line with dissipative potential, and boundary condition and connections to existing theory are discussed. 
}
\end{abstract}
\maketitle

\section{Introduction}

{
In recent years, the spectral and scattering properties of non-selfadjoint problems have become a subject of much
mathematical  and physical interest.
This is the natural setting for many important problems in physics including dissipative problems (where the system loses energy), problems in hydrodynamics and   the study of metamaterials 
where progress has been driven in part by   the development   and feasibility of manufacture of novel materials with unexpected properties.  
Dissipation, at the atomic level,  plays an essential part in many processes, see for example Milton et al.~on cloaking in the presence of a superlens \cite{MNMP05}, 
Weder et al.~\cite{DIW14}  on plasma heating  through tunneling effects in   tokamaks, the work of Figotin and Welters on dissipation in composite  materials \cite{FW12}, Cherednichenko et al.~on quantum graphs using the functional model \cite{CKS} and Fr\"ohlich et al.~on scattering for the Lindblad equations \cite{FFFS17}  where dissipative methods were used. 
}

{
Mathematically these problems pose a challenge, as apart from rather exceptional cases, the well-developed methods used to examine the spectrum of  selfadjoint problems are not applicable.  
  According to Mark Krein the spectral theorem in the selfadjoint case highlights the relationship between the spectral analysis of the operator and the geometry of the Hilbert space; in contrast,
	in the spectral analysis of non-selfadjoint operators   this geometric relationship plays a much reduced role and is replaced by complex analysis.
	A tool  more     appropriate   to analyse the spectrum of non-selfadjoint operators has to be used; such a tool is the  
functional model.
This
reduces the spectral analysis of a non-selfadjoint operator to a problem in complex analysis: the canonical factorisation of the characteristic function {as an analytic} operator-valued function in the upper half-plane  (M. Liv\v{s}ic theorem). 
  The functional model  provides a systematic approach to studying the spectral and scattering theory of non-selfadjoint problems with  wide  applicability. 
}

{
Functional models were first introduced for contractions by Sz.~Nagy and Foias (see \cite{SFBK10,Nik86} and references therein)   to analyse the structure of contractions and relations between an operator, its spectrum and its characteristic function, and simultaneously, {in a different form,} by de Branges \cite{dB68}.
Since then functional models have been developed further including a very useful symmetric version of the Sz.-Nagy-Foias model due to Pavlov \cite{Pav75}.
They have been used to obtain many results in mathematical physics and in spectral analysis with applications to problems such as
Schr\"{o}dinger operators with complex potentials  and non-selfadjoint boundary conditions, and   stochastic quantum dynamics. 
 Pavlov's work on quantum switches \cite{Pav02} and Naboko and Romanov's work on time asymptotics for the  Boltzmann operator \cite{NR01}
  have relied heavily on it. 
The  best known application is Lax-Phillips scattering theory which corresponds to a special case of the Sz.~Nagy-Foias functional model  when the characteristic function of the operator to be studied is an inner function (this excludes the possibility of absolutely continuous spectrum).
Functional models can be used to find conditions for the existence and completeness for wave operators in scattering theory, the scattering matrix and spectral shift function and give explicit formulae for them in the framework of the model (see \cite{Nab81}). Moreover, the functional model has applications in inverse scattering theory and can help provide information on which part of the operator can be reconstructed from measurements and which part can be cloaked, {see e.g.~\cite{LP67}.}
}

{
Constructions of functional models often rely on abstract results, making it  hard to apply the results to specific examples. Based on Pavlov's explicit construction of the functional model for Schr\"{o}dinger operators in \cite{Pav77}, the functional model
for the situation of a family of additive perturbations to a selfadjoint operator was developed in \cite{Nab81}. The approach has several concrete advantages: It gives explicit formulae for all expressions arising in the model and, by developing the spectral form of the functional model, it importantly also allows the study of non-dissipative operators within the framework of the functional model.  
In further work, Ryzhov has developed a functional model for the case when the perturbation is only in the boundary conditions \cite{Ryz07}. 
}

{
 In this paper, we consider  a general maximal dissipative operator and   develop the so-called `translation form' of the functional model. This consists of constructing a selfadjoint dilation of the maximal dissipative operator by adding two so-called channels. The channels model incoming and outgoing effects and the dilation acts in them as a first order differential operator. 
Kudryashov \cite{Kud82} presented an explicit form of the selfadjoint dilation of a  general maximal dissipative operator, announcing only the result without any details of the proof. Later, Ryzhov \cite{Ryz08} reconstructed this form of the selfadjoint dilation and gave a complete proof. In both cases the explicit construction of the dilation is purely abstract and contains the {square roots of some complicated operators which can only be explicitly calculated in a useful form} in the case of a rank one perturbation of a selfadjoint operator.
}

{Our goal in this paper is to present an alternative construction of the selfadjoint dilation of a maximal dissipative operator  
 using an abstract construction in the spirit of {operator knots \cite{Bro71}. One of the main ingredients of that theory is a Lagrange identity which we here extend from the case of bounded operators to our more general setting. This relates functions in the domain of the maximal dissipative operator via so-called $\Gamma$-operators to the non-selfadjoint part of the operator. 
The approach is also inspired by the theory of boundary triples (see, e.g.~\cite{BMNW08,DM91, GG91, Koc75}), where the Lagrange identity is replaced by a 
 Green formula which relates functions in the domain of an operator with their boundary data. }
  In our view, the flexibility of the choice of the $\Gamma$-operators in the Lagrange identity gives a serious advantage to our construction. It will enable us to choose the $\Gamma$-operators so that expressions arising in the dilation can be given explicitly in terms of model parameters such as coefficients or boundary conditions of the maximal dissipative operator. This will allow us to obtain spectral results in explicit terms of these model parameters which is not possible in the constructions in \cite{Kud82} and \cite{Ryz08}.
e roots of operators are involved.}

{
This paper is structured in the following way: Section 2 gives a brief introduction to the main facts about  dissipative operators and selfadjoint dilations and an overview of the historical development.
The Sections 3-4 discuss background material and introduce concepts and notation required. Although the ideas in these sections may not be completely new, many of them can already be found in the work of \v{S}traus for special cases, to the best of our knowledge the presentation in full generality here is new and additionally serves to make our presentation self-contained. 
 In Section 3 we introduce an abstract framework for working with a maximally dissipative operator and its anti-dissipative adjoint. The \v{S}traus characteristic function is introduced in our framework in Section 4 and its relation to the Sz.-Nagy-Foias characteristic function for contractions is  discussed. 
The Sections 5-8 cover the main new material of this paper.
Section 5 introduces our formula for the selfadjoint dilation and its domain. This is illustrated for two examples, including a dissipative Schr\"odinger operator on a half-line, in Section 6, showing the advantages of our explicit construction in the spirit of operator knots. {Properties of the dilation are shown in Section 7, in particular, we calculate the resolvent and show minimality of the dilation. As a by-product we present a relatively simple expression for the dilation on a core. We also discuss  complete non-selfadjointness of the dilation, giving an explicit formula for the completely non-selfadjoint subspace.} Finally, in Section 8, we discuss connections of our construction with the Kudryashov/Ryzhov model and look more closely at the characteristic function in the case of a symmetric minimal operator, {even in the case of differing deficiency indices}.
}

{The paper develops the translation form of the functional model; the so-called spectral form where it is represented as the operator of multiplication by an independent variable in
some auxiliary vector-valued function space will be the topic of a forthcoming paper.}

\section{Basic properties of dissipative operators}
This section reviews some classical results on dissipative operators. For more on the subject, we refer the reader to \cite{HP57,LP67,SFBK10}.
\begin{definition}
A densely defined linear operator $A$ with domain $D(A)$ in $H$ is called dissipative if $\Im\llangle Ah,h\rrangle \geq 0$ for all $h\in D(A)$. $A$ is called anti-dissipative if $(-A)$ is dissipative.
\end{definition}

The next result shows that every $\lambda\in\C^-$ is of regular type for a dissipative operator $A$.

\begin{proposition} Let $A$ be dissipative.
For any $f\in D(A)$ and $\lambda\in\C^-$ we have $$\norm{(A-\lambda)f}\geq |\Im\lambda|\norm{f}.$$
\end{proposition}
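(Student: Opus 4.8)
The plan is to prove the inequality by a direct computation exploiting the dissipativity hypothesis together with the Cauchy--Schwarz inequality. Writing $\lambda = \alpha + i\beta$ with $\beta = \Im\lambda < 0$, the key observation is that the imaginary part of the inner product $\llangle (A-\lambda)f, f\rrangle$ isolates precisely the term $|\Im\lambda|\,\norm{f}^2$ up to the sign-definite contribution coming from $\Im\llangle Af, f\rrangle \geq 0$.

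First I would expand
\begin{equation*}
\llangle (A-\lambda)f, f\rrangle = \llangle Af, f\rrangle - \lambda\norm{f}^2,
\end{equation*}
and take imaginary parts to obtain
\begin{equation*}
\Im\llangle (A-\lambda)f, f\rrangle = \Im\llangle Af, f\rrangle - (\Im\lambda)\norm{f}^2.
\end{equation*}
Since $A$ is dissipative, $\Im\llangle Af, f\rrangle \geq 0$, and since $\lambda\in\C^-$ we have $-\Im\lambda = |\Im\lambda| > 0$. Therefore both terms on the right-hand side are nonnegative and
\begin{equation*}
\Im\llangle (A-\lambda)f, f\rrangle \geq |\Im\lambda|\,\norm{f}^2.
\end{equation*}

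Next I would bound the left-hand side from above using the elementary fact that the imaginary part of a complex number never exceeds its modulus, followed by the Cauchy--Schwarz inequality:
\begin{equation*}
\Im\llangle (A-\lambda)f, f\rrangle \leq \left| \llangle (A-\lambda)f, f\rrangle \right| \leq \norm{(A-\lambda)f}\,\norm{f}.
\end{equation*}
Chaining the two displays gives $|\Im\lambda|\,\norm{f}^2 \leq \norm{(A-\lambda)f}\,\norm{f}$, and dividing by $\norm{f}$ (the case $f=0$ being trivial) yields the claim.

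I do not anticipate a genuine obstacle here, as the argument is entirely elementary; the only point requiring a moment's care is the bookkeeping of signs, ensuring that the sign of $\Im\lambda$ on $\C^-$ is correctly absorbed into $|\Im\lambda|$ and that the dissipativity term enters with the favourable sign so that it can simply be discarded in the lower bound. The handling of the trivial case $f = 0$ should also be noted so that the final division by $\norm{f}$ is justified.
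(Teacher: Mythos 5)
Your proof is correct. The paper states this proposition without proof, as part of a review of classical facts about dissipative operators (citing \cite{HP57,LP67,SFBK10}), and your argument is precisely the standard one: expand $\llangle (A-\lambda)f,f\rrangle$, take imaginary parts so that dissipativity contributes with the favourable sign, and close with Cauchy--Schwarz; the sign bookkeeping and the trivial case $f=0$ are handled properly.
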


 One would like to additionally have invertibility of $A-\lambda$, i.e.~$\mathrm{Ran}(A-\lambda)=H$.  The following theorem guarantees that every densely defined dissipative operator has an extension with this property:

\begin{proposition}[R.~Phillips] For any densely defined dissipative operator $A$ there exists at least one dissipative extension $\At$, i.e.~$D(A)\subseteq D(\At)$ and $\At\vert_{D(A)}=A$, such that $\mathrm{Ran}(\At-\lambda)=H$ for all $\lambda\in\C^-$.
\end{proposition}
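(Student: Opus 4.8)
The plan is to recast maximal dissipativity as a surjectivity statement ($\Ran(\At-\lambda)=H$ means $\lambda$ lies in the resolvent set of $\At$) and to construct the extension through the \emph{Cayley transform}, under which dissipativity becomes contractivity and the extension problem degenerates into the trivial extension of a contraction to the whole space. First I would reduce to the case that $A$ is closed. A densely defined dissipative operator is closable: if $f_n\to 0$ and $Af_n\to h$, then testing $\Im\llangle A(f_n+tg),f_n+tg\rrangle\geq 0$ against an arbitrary $g\in D(A)$ and letting $t\to 0^{\pm}$ through real values forces $\Im\llangle h,g\rrangle=0$, and replacing $g$ by $ig$ gives $\llangle h,g\rrangle=0$ for all $g$ in the dense set $D(A)$, whence $h=0$. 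The closure $\overline A$ is again dissipative with the same lower bound, and any extension of $\overline A$ is a fortiori an extension of $A$; so I may assume $A=\overline A$. Then the preceding Proposition's bound $\norm{(A-\lambda_0)f}\geq|\Im\lambda_0|\norm{f}$ shows that $\mathcal D_0:=\Ran(A-\lambda_0)$ is a \emph{closed} subspace for each fixed $\lambda_0\in\C^-$.

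Next, fix $\lambda_0\in\C^-$ and define the Cayley transform $T:\mathcal D_0\to H$ by $T(A-\lambda_0)f=(A-\overline{\lambda_0})f$; it is well defined and injective by the lower bound. A direct computation gives
\[
\norm{(A-\lambda_0)f}^2-\norm{(A-\overline{\lambda_0})f}^2=-4\,\Im(\lambda_0)\,\Im\llangle Af,f\rrangle\geq 0,
\]
since $\Im\lambda_0<0$ and $A$ is dissipative, so $T$ is a contraction from the closed subspace $\mathcal D_0$ into $H$. I would then extend $T$ to a contraction $\tilde T$ on all of $H$ by setting $\tilde T=0$ on $\mathcal D_0^{\perp}$, and recover the candidate extension via the inverse Cayley transform: $D(\At):=\Ran(\tilde T-I)$, and for $u\in D(\At)$, $\At u:=v+\lambda_0 u$ where $v$ solves $(\tilde T-I)v=(\lambda_0-\overline{\lambda_0})u$.

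The main obstacle is verifying that this inverse transform yields a genuine, single-valued, densely defined dissipative extension rather than a multivalued relation. I would check, in order: (i) $\ker(\tilde T-I)=\{0\}$, so that $v$ above is unique and $\At$ is an operator --- a fixed vector of $\tilde T$ must lie in $\mathcal D_0$ by a norm argument (as $\tilde T$ annihilates $\mathcal D_0^\perp$), and $T-I$ is injective on $\mathcal D_0$ because $(T-I)(A-\lambda_0)f=(\lambda_0-\overline{\lambda_0})f$; (ii) $\Ran(\tilde T-I)\supseteq(T-I)\mathcal D_0=D(A)$, so $\At$ is densely defined and, unwinding the definitions, restricts to $A$ on $D(A)$; (iii) $\At$ is dissipative, by reversing the contraction computation above; and (iv) $\Ran(\At-\lambda_0)=H$, which is immediate, since as $v$ ranges over $H$ the vector $(\At-\lambda_0)u=v$ ranges over all of $H$.

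Finally I would upgrade surjectivity from the single point $\lambda_0$ to all of $\C^-$. The bound of the preceding Proposition applied to $\At$ shows that $(\At-\lambda)^{-1}$ exists and is bounded by $|\Im\lambda|^{-1}$ wherever $\Ran(\At-\lambda)=H$; a Neumann-series perturbation argument then makes the set $\{\lambda\in\C^-:\Ran(\At-\lambda)=H\}$ open, while the lower bound together with closedness of $\At$ makes it closed in $\C^-$. Since $\C^-$ is connected and the set is nonempty (it contains $\lambda_0$), it is all of $\C^-$, completing the proof. I expect step (i) to be the genuinely delicate point, as it is exactly where closedness of $A$ and the domain bound are indispensable.
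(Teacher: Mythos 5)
Your proof is correct, but there is no proof in the paper to compare it with: the statement sits in the review Section 2, is attributed to R.~Phillips, and the paper simply refers the reader to the classical literature \cite{HP57,LP67,SFBK10}. Your route --- close the operator, pass to the Cayley transform $T$ on the closed subspace $\mathcal{D}_0=\Ran(A-\lambda_0)$, extend by zero on $\mathcal{D}_0^{\perp}$ to a contraction $\tilde T$ on $H$, invert the transform, and propagate surjectivity from the single point $\lambda_0$ to all of $\C^-$ by an open-closed-connectedness argument --- is precisely the classical Phillips/Sz.-Nagy argument, and it meshes with the paper's own framework: the paper's discussion immediately following this Proposition states that the Cayley transform identifies MDOs with contractions satisfying $1\notin\sigma_p(T)$, and your step (i), $\ker(\tilde T-I)=\{0\}$, is exactly the verification of that spectral condition for the extended contraction.

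Two points need touching up, neither fatal. First, your side remark that $T$ is ``well defined and injective by the lower bound'' is half wrong: well-definedness does follow, since the lower bound makes $A-\lambda_0$ injective, but injectivity of $T$ would amount to injectivity of $A-\overline{\lambda_0}$, and $\overline{\lambda_0}$ lies in the \emph{upper} half-plane, where a dissipative operator may have eigenvalues (take $A=iI$ and $\lambda_0=-i$; then $T\equiv 0$ on $\mathcal{D}_0=H$). Fortunately you never use injectivity of $T$: what step (i) actually needs is injectivity of $T-I$ on $\mathcal{D}_0$, which you correctly derive from the identity $(T-I)(A-\lambda_0)f=(\lambda_0-\overline{\lambda_0})f$. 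Second, your final step quietly uses that $\At$ is closed (to get relative closedness of the set of $\lambda\in\C^-$ with $\Ran(\At-\lambda)=H$); this deserves a sentence. It is immediate from your construction: $(\At-\lambda_0)^{-1}=(\lambda_0-\overline{\lambda_0})^{-1}(\tilde T-I)$ is bounded, everywhere defined and injective, and an operator possessing a bounded, everywhere defined (hence closed) inverse is itself closed. With these repairs the argument is complete.
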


\begin{definition}
Dissipative operators which have no non-trivial dissipative extensions are called \textit{maximal dissipative operators} (MDO).
\end{definition} 

MDOs are characterised by the existence of a bounded resolvent $(A-\lambda)^{-1}$ on the whole of $H$ for $\lambda\in\C^-$, i.e.~MDOs have no spectrum in the lower half plane. A dissipative operator may have several maximal dissipative extensions.

There is a bijective map between the class of MDOs and contractions which is given by the Cayley transform: For any MDO, $(A+i)^{-1}\in B(H)$. Define
\be\label{Cayley} T=I-2i(A+i)^{-1} = (A-iI)(A+iI)^{-1}.\ee
This is an operator version of the M\"{o}bius transform.

Properties of the Cayley transform:
\begin{enumerate}
	\item Let $T$ be the Cayley transform of an MDO. Then $D(T)=H$ and $\norm{T}\leq 1$.
	\item The Cayley transform is a one-to-one  map from the class of MDOs in $H$ onto the class of all contractions satisfying the extra condition $1\notin \sigma_p(T)$. Here, $\sigma_p(T)$ denotes the set of eigenvalues of $T$. The condition $1\notin \sigma_p(T)$ is equivalent to $\Ran(T-I)$ being dense in $H$.
	\item $\lambda\in\sigma(A)\subseteq\overline{\C^+}$ iff $(\lambda-i)/(\lambda+i)\in\sigma(T)$.
\end{enumerate}

\begin{remark}
At first glance the Cayley transform looks like a very convenient tool, replacing the complicated class of unbounded MDO by the class of contractions. However, for a  particular MDO $A$ there is rarely sufficient information on $(A+i)^{-1}$ to explicitly obtain $T$. Therefore, the importance of the Cayley transform is often purely theoretical.
\end{remark}

The real eigenvalues of an MDO exhibit the same behaviour as the eigenvalues of selfadjoint operators.
\begin{proposition}[Sz.Nagy]
Let $A$ be an MDO. Then the eigenvectors corresponding to real eigenvalues are orthogonal to all eigenvectors corresponding to different eigenvalues (real or complex). Moreover, the subspace spanned by all eigenvectors corresponding to real eigenvalues belongs to the selfadjoint subspace $H_1$ in the Langer decomposition (see Proposition \ref{prop:Langer}).
\end{proposition}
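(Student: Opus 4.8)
The plan is to reduce everything to one key lemma: \emph{if $A$ is dissipative and $Au=\lambda u$ with $u\neq0$ and $\lambda\in\R$, then $u\in D(A^*)$ and $A^*u=\lambda u$}; that is, a real eigenvalue forces its eigenvector to behave exactly as in the selfadjoint case. I would prove this variationally from the dissipativity inequality. Fix $h\in D(A)$ and $t\in\C$ and expand
\[
0\le \Im\langle A(u+th),u+th\rangle = \Im\langle Au,u\rangle + \Im\big(t\langle Ah,u\rangle+\bar t\,\langle Au,h\rangle\big) + |t|^2\,\Im\langle Ah,h\rangle .
\]
Since $\lambda\in\R$ the first term vanishes and $\langle Au,h\rangle=\lambda\langle u,h\rangle$, so the right-hand side is a real-linear function of $t$ plus a nonnegative $O(|t|^2)$ term. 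For the inequality to persist for all $t$ in a neighbourhood of $0$, the linear part must vanish identically; writing $t=x+iy$ and collecting the coefficients of $x$ and $y$ yields $\langle Ah,u\rangle=\overline{\lambda\langle u,h\rangle}=\lambda\langle h,u\rangle$ for every $h\in D(A)$. This is precisely the statement that $u\in D(A^*)$ with $A^*u=\lambda u$.

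Granting the lemma, the orthogonality is immediate. Let $Av=\mu v$ with $\mu\neq\lambda$ (real or complex). Using $v\in D(A)$ and $u\in D(A^*)$,
\[
\mu\langle v,u\rangle=\langle Av,u\rangle=\langle v,A^*u\rangle=\langle v,\lambda u\rangle=\lambda\langle v,u\rangle,
\]
so $(\mu-\lambda)\langle v,u\rangle=0$ and hence $\langle v,u\rangle=0$. Note the argument never uses that $\mu$ is real, so it covers eigenvectors for complex eigenvalues as well.

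For the last assertion I would let $\mathcal E$ be the closed linear span of all eigenvectors of $A$ corresponding to real eigenvalues and show $\mathcal E\subseteq H_1$. By the lemma, on each such eigenvector $A$ and $A^*$ agree ($Au=\lambda u=A^*u$), and by the previous paragraph eigenvectors for distinct real eigenvalues are orthogonal; consequently $A$ and $A^*$ coincide on the algebraic span of these eigenvectors, and $\mathcal E$ is invariant for both $A$ and $A^*$, equivalently for the resolvents $(A-\zeta)^{-1}$, $\zeta\in\C^-$. Thus $\mathcal E$ reduces $A$ and $A|_{\mathcal E}$ is selfadjoint, so by the maximality characterisation of the selfadjoint summand in the Langer decomposition (Proposition~\ref{prop:Langer}) we obtain $\mathcal E\subseteq H_1$. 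Alternatively one may pass to the Cayley transform $T$ of \eqref{Cayley}: the lemma gives $Tu=\zeta u$ and $T^*u=\bar\zeta u$ with $\zeta=(\lambda-i)/(\lambda+i)$ of modulus one, which places $u$ in the unitary part of the Sz.-Nagy--Foias canonical decomposition of $T$, i.e.\ in $H_1$.

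The main obstacle is the key lemma, and within it the care required because $A$ is unbounded: the expansion and the ``linear term must vanish'' step are elementary once set up, but one must check that all pairings are legitimate (they are, since $u,h\in D(A)$) and that the conclusion $\langle Ah,u\rangle=\lambda\langle h,u\rangle$ for all $h\in D(A)$ is correctly read as membership in $D(A^*)$ together with the eigenvalue relation. A secondary point needing attention is the passage to closures in the last paragraph: one should verify that the reduction of $A$ and the selfadjointness of $A|_{\mathcal E}$ survive on all of $\mathcal E$ and not merely on the algebraic span, for which it is cleanest to check invariance of $\mathcal E$ under the resolvent of $A$ rather than under the unbounded operator directly.
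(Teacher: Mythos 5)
The paper offers no proof of this proposition to compare against: it is stated as classical background, attributed to Sz.-Nagy, with the reader referred to \cite{HP57,LP67,SFBK10}. Judged on its own merits, your argument is correct and essentially complete. The variational key lemma is sound: $u+th\in D(A)$ makes all pairings legitimate, the real-linear term in $t$ must vanish against the nonnegative $O(|t|^2)$ term (replace $t$ by $st$, divide by $s$, let $s\to 0$), and testing with $t\in\R$ and $t\in i\R$ gives $\llangle Ah,u\rrangle=\lambda\llangle h,u\rrangle$ for all $h\in D(A)$, which is by definition $u\in D(A^*)$, $A^*u=\lambda u$. The orthogonality step is then the standard adjoint pairing, and you are right that it never uses reality of $\mu$.

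One step in your main route deserves sharper justification: the assertion that $A|_{\mathcal E}$ is selfadjoint. Knowing that $A$ and $A^*$ agree on the algebraic span of the eigenvectors, plus reduction of $\mathcal E$, is not by itself enough; symmetry does not imply selfadjointness for maximal dissipative operators (the paper's own Example in Section 3, $A=i\frac{d}{dx}$ with $D(A)=H^1_0(\R^+)$, is symmetric, maximal dissipative and not selfadjoint), and two closed operators agreeing on a dense set need not coincide unless that set is a core for both. What actually closes this step is precisely your ``alternative'' Cayley-transform argument: on eigenvectors one has both $Tu=\zeta u$ and $T^*u=\bar\zeta u$ with $|\zeta|=1$, hence $T^*T=I$ \emph{and} $TT^*=I$ on a dense subset of $\mathcal E$, and by boundedness on all of $\mathcal E$; thus $T|_{\mathcal E}$ is unitary (not merely isometric, which is where symmetric-but-not-selfadjoint operators fail), $1\notin\sigma_p(T|_{\mathcal E})$, and its inverse Cayley transform $A|_{\mathcal E}$ is selfadjoint. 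That route should be the main argument rather than an aside. A last small point: Proposition \ref{prop:Langer} as stated gives \emph{uniqueness} of the decomposition, not maximality of $H_1$; maximality does follow, by applying the Langer decomposition to $A$ restricted to $\mathcal E^{\perp}$, forming $H=(\mathcal E\oplus F_1)\oplus F_2$, and invoking uniqueness, but this one-line reduction is worth writing out since it is what your final inclusion $\mathcal E\subseteq H_1$ rests on.
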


\begin{remark}
Note that for an MDO $A$  there cannot be any root vectors corresponding to real spectrum. Just as in the well-known case of matrices, this follows from the resolvent estimate $\norm{(A-\lambda I)^{-1}}\leq (|\Im\lambda|)^{-1}$ for $\lambda\in\C^-$.
\end{remark}

The proposition says that our operator consists of a part (corresponding to the set of eigenvectors of the real point spectrum) which looks like a selfadjoint operator and a remaining part. It seems reasonable to try to study the two parts separately. This idea leads to the introduction of the notion of completely non-selfadjoint operators (corresponding to the remaining part of the operator).

\begin{definition}
Let $A$ be an operator on a Hilbert space $H$, $H_1\subseteq H$ a subspace
and $P_{H_1}$ the orthogonal projection of $H$ onto $H_1$.
The subspace $H_1$ 
 is \textit{invariant} with respect to $A$ if $P_{H_1}D(A)\subseteq D(A)$ and $AP_{H_1}h\in H_1$ for all $h\in D(A)$.
It is a \textit{reducing subspace} for $A$ if both $H_1$ and $H\ominus H_1$ are invariant with respect to $A$. 
\end{definition}

\begin{definition} Let $A$ be an MDO.  $A$ is \textit{completely non-selfadjoint (cns)} if there exists no reducing subspace $H_1\subseteq H$ such that $A\vert_{H_1}$ is selfadjoint.
\end{definition}

{The following result gives an explicit formula for the completely non-selfadjoint part of the operator. {In the case of relatively bounded imaginary part the formula is simple. For more general situations the formula involves operators $\Delta$ and $\Delta_*$ which are regularisations of the (possibly non-existing) imaginary part of the operator. In our setting, we will determine an explicit formula for the completely non-selfadjoint part of an MDO in Theorem \ref{thm:cns}.}} 

\begin{proposition}\label{prop:Langer} (Langer decomposition, see \cite{Lan61,Nab81}).
Let $A$ be an MDO. Then there exists a unique decomposition of $H=H_1\oplus H_2$ into an orthognal sum of two reducing subspaces for $A$ such that $A\vert_{H_1}$ is selfadjoint in $H_1$ and  $A\vert_{H_2}$ is completely non-selfadjoint in $H_2$.

{Define
\ben\label{Delta}
\Delta&=& I-T^*T \ =\ 2i\left[(A+i)^{-1}-(A^*-i)^{-1}+2i (A^*-i)^{-1}(A+i)^{-1}  \right],
\een
\ben\label{Deltastar}
\Delta_*&=& I-TT^* \ =\  2i\left[(A+i)^{-1}-(A^*-i)^{-1}+2i (A+i)^{-1}(A^*-i)^{-1} \right]
\een
and set
$$\mathcal{M}:= \Ran(\Delta) +\Ran(\Delta_*)\subseteq H.$$
Then the completely non-selfadjoint part $H_2$ is given by
$$H_2 = \clos\left(\mathrm{Span}_{\Im\lambda< 0} \{ (A-\lambda)^{-1}\mathcal{M}\}+\mathrm{Span}_{\Im\lambda> 0} \{ (A^*-\lambda)^{-1}\mathcal{M}\} \right).
$$}

If $A$ has relatively bounded imaginary part, i.e.~$A=L+iV$ with $L=L^*$, $V\geq 0$, $V$ relatively $L$-bounded, then there is a simple explicit expression for the completely non-selfadjoint part $H_2$:
$$H_2 = \clos\left(\mathrm{Span}_{\Im\lambda\neq 0} \{ (L-\lambda)^{-1}\Ran V\} \right) 
= \clos\left(\mathrm{Span}_{\lambda\notin(\sigma(A)\cup\R)} \{ (A-\lambda)^{-1}\Ran V\} \right),$$
i.e.~$H_2$ is generated by the range of the imaginary part $V$ developed by the resolvent of the operator $A$ or its real part $L$. Moreover, $A\vert_{H_1}=L\vert_{H_1}$.
\end{proposition}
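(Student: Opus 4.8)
The plan is to transfer the whole statement to the Cayley transform $T=I-2i(A+i)^{-1}$ of $A$, which by the properties listed above is a contraction with $1\notin\sigma_p(T)$, to invoke the classical canonical (Sz.-Nagy--Foias) decomposition of a contraction into its unitary and completely non-unitary parts, and to translate back. First I would record the dictionary: a subspace reduces $T$ if and only if it reduces $A$ (for bounded $T$ this is the usual notion, and the inverse Cayley transform $A=i(I+T)(I-T)^{-1}$ defined on $\Ran(I-T)$ respects any reducing subspace), and $T|_{H_1}$ is unitary iff $A|_{H_1}$ is selfadjoint, while $T|_{H_2}$ is completely non-unitary iff $A|_{H_2}$ is completely non-selfadjoint. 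Existence and uniqueness of $H=H_1\oplus H_2$ is then exactly the canonical decomposition of $T$, which I would either cite or reproduce.

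For the explicit formula I would use the standard description of the unitary subspace of a contraction. Since $\|T^{n+1}h\|^2-\|T^nh\|^2=-\|D_TT^nh\|^2$ with $D_T=(I-T^*T)^{1/2}$ (so $D_T^2=\Delta$), the sequence $\|T^nh\|$ is non-increasing and constant iff $D_TT^nh=0$ for all $n\ge0$; together with the analogous statement for $T^*$ this gives $H_1=\bigcap_{n\ge0}\ker(D_TT^n)\cap\bigcap_{n\ge0}\ker(D_{T^*}T^{*n})$. Taking orthogonal complements and using $\clos\Ran D_T=\clos\Ran\Delta$ and $\clos\Ran D_{T^*}=\clos\Ran\Delta_*$ yields $H_2=\clos\Span\{T^{*n}\Ran\Delta,\ T^n\Ran\Delta_*:n\ge0\}$. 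To match the resolvent form I would use the M\"obius correspondence: with $z=(\lambda-i)/(\lambda+i)$ one checks $T-z=(1-z)(A-\lambda)(A+i)^{-1}$, hence $(T-z)^{-1}=(1-z)^{-1}(A+i)(A-\lambda)^{-1}$; expanding $(T-z)^{-1}$ by Neumann series for $|z|>1$ shows $\clos\Span_{\Im\lambda<0}\{(A-\lambda)^{-1}\mathcal{M}\}=\clos\Span\{T^n\mathcal{M}:n\ge0\}$, and symmetrically $\clos\Span_{\Im\lambda>0}\{(A^*-\lambda)^{-1}\mathcal{M}\}=\clos\Span\{T^{*n}\mathcal{M}:n\ge0\}$. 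Finally, since $H_2$ reduces $T$ (so is invariant under $T$ and $T^*$) and contains $\Ran\Delta$ and $\Ran\Delta_*$, the apparently larger generating set $\{T^n\mathcal{M},T^{*n}\mathcal{M}\}$ lies in $H_2$, while it already contains the generators $T^{*n}\Ran\Delta$, $T^n\Ran\Delta_*$; this double inclusion reconciles the two descriptions and gives the stated formula.

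For the relatively bounded case $A=L+iV$ the key computation is an explicit form of the defects. Writing $u=(A+i)^{-1}h$, so that $Th=(A-i)u$, a direct expansion gives $\|h\|^2-\|Th\|^2=\|(A+i)u\|^2-\|(A-i)u\|^2=4\,\Im\langle Au,u\rangle=4\langle Vu,u\rangle$, whence, comparing quadratic forms of bounded selfadjoint operators, $\Delta=4(A^*-i)^{-1}V(A+i)^{-1}$ and, symmetrically, $\Delta_*=4(A+i)^{-1}V(A^*-i)^{-1}$; relative boundedness of $V$ guarantees that $V(A+i)^{-1}$ is bounded, so these expressions are legitimate. Now, because $A|_{H_1}$ is selfadjoint, $\langle Vh,h\rangle=\Im\langle Ah,h\rangle=0$ for $h\in D(A)\cap H_1$, so $H_1\subseteq\ker V$, i.e.\ $\clos\Ran V\subseteq H_2$; consequently $V$ is block-diagonal for the decomposition (zero on $H_1$, mapping $H_2$ into $\clos\Ran V\subseteq H_2$), and since $A$ reduces $H_1,H_2$ so does $L=A-iV$, with $L|_{H_1}=A|_{H_1}$. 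Setting $N:=\clos\Span_{\Im\lambda\neq0}\{(L-\lambda)^{-1}\Ran V\}$, the smallest reducing subspace of the selfadjoint operator $L$ containing $\Ran V$, I would prove $N=H_2$ by two inclusions: since $\clos\Ran V\subseteq H_2$ and $H_2$ reduces $L$, minimality gives $N\subseteq H_2$; conversely the defect formulae give $\Ran\Delta\subseteq(A^*-i)^{-1}\clos\Ran V\subseteq N$ and $\Ran\Delta_*\subseteq(A+i)^{-1}\clos\Ran V\subseteq N$ (using that $N$, reducing $L$ and $V$, also reduces $A$ and $A^*$), so $\mathcal{M}\subseteq N$ and, by minimality of $H_2$ among $A$-reducing subspaces containing $\mathcal{M}$, $H_2\subseteq N$. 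The second equality follows by replacing $L$-resolvents with $A$-resolvents through $(L-\lambda)^{-1}=(A-\lambda)^{-1}+i(A-\lambda)^{-1}V(L-\lambda)^{-1}$, which leaves the developed subspace unchanged because $V$ maps into $\clos\Ran V\subseteq N$.

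I expect the main obstacle to be the rigorous bookkeeping in the general (not relatively bounded) formula: the faithful translation between the functional calculus / power spans of $T,T^*$ and the resolvent spans of $A,A^*$, and the verification that invariance under the two resolvent families is equivalent to reducing $T$; and, in the relatively bounded case, the domain and closure issues created by an unbounded, merely $L$-bounded $V$ in manipulating $\Delta=4(A^*-i)^{-1}V(A+i)^{-1}$ and in interchanging the $A$- and $L$-generated reducing subspaces.
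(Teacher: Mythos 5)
The paper itself gives no proof of this proposition; it is quoted as a classical result with references to Langer and Naboko, so there is nothing internal to compare against. Your route --- pass to the Cayley transform $T$, invoke the Sz.-Nagy--Foias canonical decomposition into unitary and completely non-unitary parts, and translate back --- is precisely the classical argument of those references, and most of it checks out: the dictionary between reducing subspaces of $A$ and of $T$; the identification $H_1=\bigcap_{n\ge0}\ker(D_TT^n)\cap\bigcap_{n\ge0}\ker(D_{T^*}T^{*n})$ and hence $H_2=\clos\Span\{T^{*n}\Ran\Delta,\,T^n\Ran\Delta_*: n\ge 0\}$; the passage between power spans of $T,T^*$ and resolvent spans of $A,A^*$ (Neumann series one way, the limits $-\lambda(A-\lambda)^{-1}\to I$ and difference quotients of resolvents the other); the defect identities $\Delta=4(A^*-i)^{-1}V(A+i)^{-1}$ and $\Delta_*=4(A+i)^{-1}V(A^*-i)^{-1}$ (quadratic forms determine bounded operators on a complex space); the inclusion $\clos\Ran V\subseteq H_2$, which gives $L\vert_{H_1}=A\vert_{H_1}$; and the two-inclusion proof that $\clos\Span_{\Im\lambda\neq0}\{(L-\lambda)^{-1}\Ran V\}=H_2$. (Implicit throughout is that the relative bound of $V$ is $<1$, so that $V(A\pm i)^{-1}$ is bounded and $A^*=L-iV$ on $D(L)$; say this once.)

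The one step that fails as written is the last one. The identity $(L-\lambda)^{-1}=(A-\lambda)^{-1}+i(A-\lambda)^{-1}V(L-\lambda)^{-1}$ exists only for $\lambda\in\rho(A)$, so your substitution converts $(L-\lambda)^{-1}\Ran V$ into $A$-resolvent data only for $\lambda\in\rho(A)\setminus\R$. The generators with $\lambda\in\sigma(A)\cap\C^+$ --- which is exactly why the proposition restricts the second span to $\lambda\notin\sigma(A)\cup\R$ --- are not covered, so the inclusion of the $L$-span into $\clos\Span_{\lambda\notin\sigma(A)\cup\R}\{(A-\lambda)^{-1}\Ran V\}$ is left unproved. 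A repair: if $w$ is orthogonal to the $A$-span, then $V(A^*-\bar\lambda)^{-1}w=0$ for all $\lambda\in\rho(A)\setminus\R$; since $A^*=L-iV$, the equation $V(A^*-\mu)^{-1}w=0$ forces $(A^*-\mu)^{-1}w=(L-\mu)^{-1}w$, hence $V(L-\mu)^{-1}w=0$ for all $\mu\in\C^+$ and for all $\mu\in\C^-$ conjugate to points of $\rho(A)\cap\C^+$. The map $\mu\mapsto V(L-\mu)^{-1}w$ is analytic on each half-plane, and $\rho(A)\cap\C^+$ is a nonempty open set (this itself needs the Kato--Rellich estimate $\norm{V(L-iy)^{-1}}\le a+b/y<1$ for large $y>0$, i.e., relative bound $a<1$; it is false for general maximal dissipative operators), so the vanishing propagates to all of $\C^-$, giving $w\perp(L-\mu)^{-1}\Ran V$ for every nonreal $\mu$, i.e., $w\perp H_2$. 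Without this continuation step, or some substitute, the second displayed equality is not established; note that absent the relatively-bounded structure, $\sigma(A)$ can cover all of $\C^+$ and there is nothing to continue from, so the point is not merely cosmetic bookkeeping.
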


In systems theory, MDOs are used to describe systems with a loss of energy, while Hermitian operators describe systems with energy conservation. This naturally leads to the idea of including a dissipative system in a larger conservative one, taking into account `where' the energy is leaking to. The mathematical realization of this idea is due to the Hungarian mathematician B.~Sz.-Nagy in the late 50ies, but its roots go back to earlier papers by M.~Naimark. Actually, Sz.-Nagy worked with contractions rather than MDOs. However the two formulations are equivalent via the Cayley transform.


\begin{proposition}[Sz.-Nagy]\label{dilation}
 For any MDO $A$ on a Hilbert space $H$ there exists a selfadjoint operator $\L$ on a Hilbert space $\H\supseteq H$ such that 
 \be\label{dilsem}
e^{it A}=P_H e^{it\L}\vert_H,\ t\geq 0 \quad \hbox{ or equivalently } \quad (A-\lambda)^{-1}=P_H (\L-\lambda)^{-1}\vert_H,\quad \lambda\in\C^-.
\ee 
The operator $\L$ is called a \textit{selfadjoint dilation} of $A$.
\end{proposition}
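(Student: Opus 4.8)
The plan is to reduce everything to the Sz.-Nagy unitary dilation theorem for a single contraction through the Cayley transform, and then to invert the Cayley transform at the level of the dilating operator. I would first record that the two displayed forms are equivalent. Since $A$ is an MDO, $iA$ generates the contraction semigroup $\{e^{itA}\}_{t\geq0}$ (Lumer-Phillips: $\Re\llangle iAh,h\rrangle=-\Im\llangle Ah,h\rrangle\leq0$ and $\Ran(A-\lambda)=H$ for $\lambda\in\C^-$), and for $\lambda\in\C^-$, where $\Im\lambda<0$ makes the integral converge, one has $(A-\lambda)^{-1}=-i\int_0^\infty e^{-i\lambda t}e^{itA}\,dt$, with the identical representation for the selfadjoint $\L$. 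Compressing the semigroup identity with $P_H$ and integrating then yields the resolvent identity, while injectivity of the Laplace transform gives the converse; hence it suffices to construct $\L$ and verify either form.

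Next I would pass to the contraction $T=(A-iI)(A+iI)^{-1}$ of \eqref{Cayley}, which satisfies $\norm{T}\leq1$ and $1\notin\sigma_p(T)$. By the Sz.-Nagy dilation theorem (see \cite{SFBK10,Nik86}) there is a Hilbert space $\H\supseteq H$ and a unitary operator $U$ on $\H$ with $T^n=P_H U^n|_H$ for all $n\geq0$, and I would choose the minimal such dilation, so that $\H=\clos \Span\{U^n h: n\in\Z,\ h\in H\}$. Taking adjoints gives $P_H U^{-n}|_H=T^{*n}$ for $n\geq0$ as well. The candidate dilation is the inverse Cayley transform $\L:=i(I+U)(I-U)^{-1}$, defined on $\Ran(I-U)$, which is the exact analogue of the identity $A=i(I+T)(I-T)^{-1}$ on $\Ran(I-T)$.

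The crucial step, which I expect to be the main obstacle, is to show $1\notin\sigma_p(U)$: this is precisely what makes $\L$ a densely defined selfadjoint operator, since for unitary $U$ one has $\Ran(I-U)^\perp=\ker(I-U)$, so both density of $D(\L)$ and self-adjointness hinge on $\ker(I-U)=\{0\}$. Here minimality is essential. I would use the Sz.-Nagy-Foias canonical decomposition $T=U_0\oplus T_1$ into a unitary part $U_0$ and a completely non-unitary part $T_1$: the minimal unitary dilation splits accordingly as $U=U_0\oplus U_1$, and the minimal unitary dilation $U_1$ of a completely non-unitary contraction has purely absolutely continuous spectrum, hence no eigenvalues. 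Consequently every eigenvector of $U$ is an eigenvector of $U_0$, and therefore of $T$; since $1\notin\sigma_p(T)$ this forces $\ker(I-U)=\{0\}$. A direct computation is consistent with this: from $Ux=x$ and $P_H U|_H=T$ one gets $(I-T)P_Hx=P_H U(I-P_H)x$, which, together with $\ker(I-T)=\{0\}$ and minimality, can be used to conclude $x=0$.

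Finally I would transfer the power-dilation identity to resolvents. For $\lambda\in\C^-$ the Cayley image $\mu=(\lambda-i)/(\lambda+i)$ satisfies $|\mu|>1$, so $(T-\mu)^{-1}$ and $(U-\mu)^{-1}$ exist and expand as norm-convergent Neumann series $-\sum_{n\geq0}\mu^{-n-1}T^n$ and $-\sum_{n\geq0}\mu^{-n-1}U^n$; compressing termwise and using $T^n=P_H U^n|_H$ gives $P_H(U-\mu)^{-1}|_H=(T-\mu)^{-1}$. A short computation using $T-\mu=(1-\mu)(A-\lambda)(A+i)^{-1}$ yields
\[
(A-\lambda)^{-1}=\frac{1}{\lambda+i}\big[(1-\mu)(T-\mu)^{-1}-I\big],
\]
together with the identical identity relating $(\L-\lambda)^{-1}$ to $(U-\mu)^{-1}$ (the point $\lambda=-i$ being covered by analyticity in $\lambda$). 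Compressing the latter with $P_H$, and using $P_H I|_H=I_H$ and $P_H(U-\mu)^{-1}|_H=(T-\mu)^{-1}$, gives $P_H(\L-\lambda)^{-1}|_H=(A-\lambda)^{-1}$, which is the desired resolvent form and, via the Laplace equivalence of the first paragraph, the semigroup form as well. The Laplace equivalence and the Neumann/Cayley transfer are routine; the substance lies in the spectral fact $1\notin\sigma_p(U)$ that legitimises the inverse Cayley transform.
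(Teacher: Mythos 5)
Your proof is correct, but it takes a genuinely different route from the paper's. You argue the classical way: Cayley-transform $A$ to the contraction $T$ of \eqref{Cayley}, invoke the Sz.-Nagy unitary dilation theorem to obtain a minimal unitary dilation $U$, and recover $\L$ as the inverse Cayley transform of $U$. The one delicate point, $1\notin\sigma_p(U)$, you settle correctly via the canonical decomposition $T=U_0\oplus T_1$ and the absolute continuity of the spectrum of the minimal unitary dilation of a completely non-unitary contraction (the contraction analogue of the Foias proposition quoted in Section 2); note that your closing ``direct computation'' is only a consistency check, not a substitute for that argument. The Neumann-series transfer $P_H(U-\mu)^{-1}\vert_H=(T-\mu)^{-1}$ for $|\mu|>1$, the M\"obius identity $T-\mu=(1-\mu)(A-\lambda)(A+i)^{-1}$, and the Laplace-transform equivalence of the semigroup and resolvent forms are all sound. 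The paper does none of this: Proposition \ref{dilation} is stated there as classical background, and the paper's own, self-contained proof of existence is the explicit construction of Sections 3--7 --- the Lagrange identity of Lemma \ref{lemma 3.1} producing $\Gamma,\Gamma_*$, the dilation space $L^2(\R^-,E_*)\oplus H\oplus L^2(\R^+,E)$ with the boundary conditions (I) and (II) of \eqref{def:L} involving the \v{S}traus characteristic function, the explicit resolvent (Lemma \ref{lem:res}), symmetry (Lemma \ref{lem:symm}) combined with surjectivity to get selfadjointness, and the dilation property \eqref{eq:dil} together with minimality (Theorem \ref{Th:dil}). What each approach buys: yours is short, but only modulo heavy black boxes, and it produces a dilation with no usable description --- precisely the defect the paper's remark on the Cayley transform points out (for a concrete $A$ one rarely has explicit knowledge of $(A+i)^{-1}$, so $T$, $U$ and hence your $\L$ remain abstract); the paper's construction is far longer but delivers the dilation, its resolvent, a core and the completely non-selfadjoint subspace explicitly in terms of the parameters of $A$, plus minimality, which is what the examples of Section 6 and the rest of the paper require.
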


\begin{definition} A dilation is \textit{minimal} if it contains no non-trivial reducing part which is itself a selfadjoint dilation of $A$. 
\end{definition}

The minimal  selfadjoint dilation of an arbitrary MDO $A$ will be the sum  of the selfadjoint  part of $A$ and the minimal selfadjoint dilation of the completely non-selfadjoint part. Any completely non-selfadjoint operator has a minimal selfadjoint dilation.

\begin{proposition}[Foia\lfhook{s}]
	The minimal selfadjoint dilation of a completely non-selfadjoint MDO $A$ always has pure absolutely continuous spectrum covering the whole real line, in particular $d\llangle E^\L_\lambda h, h\rrangle$ is an absolutely continuous measure for any $h\in\H$, where $ E^\L_\lambda$ is the spectral resolution of the selfadjoint dilation $\L$.
	\end{proposition}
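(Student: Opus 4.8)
The plan is to reduce the statement to the Lax--Phillips--Sinai translation representation theorem for one-parameter unitary groups. Writing $U(t)=e^{it\L}$ for the group generated by the minimal dilation, the key objects are the two channels $D_-,D_+\subseteq\H$ on which, by the construction of $\L$, the generator acts as a first order differential operator $i\,d/dx$; under the Fourier transform this is multiplication by the independent variable, so $U(t)$ acts on each channel as translation. The target is to show that $D_+$ (and, symmetrically, $D_-$) is an \emph{outgoing} (resp.\ incoming) subspace in the Lax--Phillips sense, i.e.\ that
\begin{enumerate}
\item $U(t)D_+\subseteq D_+$ for all $t\ge 0$;
\item $\bigcap_{t\ge 0}U(t)D_+=\{0\}$;
\item $\overline{\bigcup_{t\le 0}U(t)D_+}=\H$.
\end{enumerate}
Once these are in place, Sinai's theorem produces an auxiliary Hilbert space $N$ and a unitary map $\Phi\colon\H\to L^2(\R;N)$ intertwining $U(t)$ with translation by $t$ and sending $D_+$ to $L^2(\R_+;N)$; equivalently, $\L$ is unitarily equivalent to $-i\,d/dx$ on $L^2(\R;N)$.

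First I would verify (1) and (2), which form the ``soft'' part and follow directly from the explicit form of $\L$ on the channels. On $D_+\cong L^2(\R_+;N)$ the group $U(t)$ is a one directional shift, so half line supports are preserved for $t\ge 0$, giving (1); and since the support of $U(t)f$ escapes to infinity as $t\to+\infty$, no nonzero vector can survive in every $U(t)D_+$, giving (2). The nontriviality of $N$ --- hence that the spectrum genuinely fills $\R$ rather than being empty --- follows because a completely non-selfadjoint MDO on a nonzero space cannot be selfadjoint, so it carries genuine dissipation and the channel space $N\neq\{0\}$.

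The main obstacle is the density condition (3), and this is precisely where complete non-selfadjointness and minimality enter. The plan is to argue by contradiction: if $h\perp\bigcup_{t}U(t)D_+$, I would show, using the analogous invariance of $D_-$ and the decomposition $\H=D_-\oplus H\oplus D_+$ furnished by the dilation, that the smallest $U(t)$-reducing subspace containing such an $h$ meets $H$ in a subspace that reduces $A$ and on which $A$ is selfadjoint. Complete non-selfadjointness of $A$ then forces this subspace to be trivial, while minimality of $\L$ (that $\H$ is the smallest reducing subspace of $\L$ containing $H$, cf.\ the dilation relation \eqref{dilsem}) guarantees that a vector orthogonal to all translates of both channels must originate from such a reducing piece. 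Hence $h=0$ and (3) holds. Controlling this orthogonality computation, and identifying the offending subspace as a selfadjoint reducing part of $A$, is the delicate step; everything else is formal.

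Finally, granting (1)--(3), the translation representation identifies $\L$ with $-i\,d/dx$ on $L^2(\R;N)$, whose spectrum is purely absolutely continuous, equal to all of $\R$, with uniform multiplicity $\dim N$. In particular $\L$ has no singular spectrum, so $d\llangle E^{\L}_\lambda h,h\rrangle$ is absolutely continuous for \emph{every} $h\in\H$, which is the assertion. As an alternative route one may pass through the Cayley transform: $A$ corresponds to a completely non-unitary contraction $T$, the minimal selfadjoint dilation $\L$ to the minimal unitary dilation of $T$, and the Sz.-Nagy--Foia\c{s} theorem that the minimal unitary dilation of a c.n.u.\ contraction has purely absolutely continuous spectrum on the whole unit circle transfers back to $\L$ via the inverse Cayley map; in this formulation the role of condition (3) is played by complete non-unitarity of $T$.
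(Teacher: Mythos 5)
The paper itself offers no proof of this Proposition: it is quoted in Section 2 as classical background due to Foias, with the burden carried by the references \cite{LP67,SFBK10}, so your proposal has to stand on its own, and as written it contains a genuine gap. Your conditions (1) and (2) are indeed the soft part, but condition (3) --- density of $\bigcup_{t}U(t)D_+$ for the \emph{single} channel $D_+$ (and ``symmetrically'' for $D_-$) --- is false in general, so the step you defer to the end is not merely delicate, it would fail. The paper's own Example (part 2 of the Example in Section 3) is a counterexample: $A=i\,d/dx$ on $H^1_0(\R^+)$ is a completely non-selfadjoint MDO with $\Gamma=0$ and $E=\{0\}$, hence $D_+=L^2(\R^+,E)=\{0\}$, while its minimal selfadjoint dilation is $i\,d/dx$ on $L^2(\R)$; all translates of $D_+$ span $\{0\}$, yet the conclusion of the Proposition holds. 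Nor can you rescue the argument by switching to whichever channel is nontrivial: the direct sum of this example with its reflection $i\,d/dx$ on $H^1(\R^-)$ (an MDO with $E_*=\{0\}$) is again completely non-selfadjoint, and for it \emph{neither} channel alone has dense translates. Consequently a translation representation of all of $\H$ modelled on $D_+$ does not exist in general, and your final paragraph, which is conditioned on (1)--(3), does not go through.

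The repair is to combine the two channels instead of insisting on one. Conditions (1)--(2) alone, applied inside $K_\pm:=\clos\bigl(\bigcup_{t}U(t)D_\pm\bigr)$ (where the density condition holds by construction), give that $\L$ restricted to the reducing subspace $K_+$ (resp.\ $K_-$) is unitarily equivalent to differentiation on $L^2(\R;E)$ (resp.\ $L^2(\R;E_*)$), hence purely absolutely continuous with spectrum $\R$ whenever the corresponding channel is nonzero. The whole weight then falls on proving $M:=\H\ominus(K_+\vee K_-)=\{0\}$, and here your sketch is essentially correct once it is aimed at \emph{both} channels: $M\perp D_+\oplus D_-$ forces $M\subseteq H$; $M$ reduces $\L$, so by the dilation relation \eqref{dilsem} one gets $(A-\lambda)^{-1}\vert_M=(\L-\lambda)^{-1}\vert_M$ for $\lambda\in\C^-$ and $(A^*-\lambda)^{-1}\vert_M=(\L-\lambda)^{-1}\vert_M$ for $\lambda\in\C^+$, whence $M$ reduces $A$ and $A\vert_M=\L\vert_M$ is selfadjoint, so $M=\{0\}$ by complete non-selfadjointness. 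Since the absolutely continuous subspace of $\L$ is closed and contains $K_+\vee K_-=\H$, the spectrum is purely absolutely continuous; it covers $\R$ because $E=E_*=\{0\}$ would force $A=A^*$, contradicting complete non-selfadjointness on a nonzero space. Two further remarks: this argument proves the statement for the concrete channel-form dilation, so to reach \emph{the} abstract minimal dilation you must invoke uniqueness of minimal selfadjoint dilations up to unitary equivalence; alternatively, your closing Cayley-transform route through the Sz.-Nagy--Foias theorem on minimal unitary dilations of completely non-unitary contractions \cite{SFBK10} is the genuinely classical proof and sidesteps the channel structure --- there, the role you assigned to condition (3) is played correctly by complete non-unitarity, exactly as in the $M=\{0\}$ argument above.
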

\begin{corollary}\label{6.1}
Let $A$ be an MDO such that the spectrum of its minimal dilation does not cover the whole real line. Then $A=A^*$.
\end{corollary}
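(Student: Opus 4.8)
The plan is to argue by contraposition and reduce everything to the theorem of Foia\c{s} stated just above, using the Langer decomposition of Proposition \ref{prop:Langer}. So suppose $A\neq A^*$; I want to deduce that the spectrum of the minimal dilation $\L$ covers all of $\R$.

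First I would invoke Proposition \ref{prop:Langer} to split $H=H_1\oplus H_2$ into reducing subspaces with $A|_{H_1}$ selfadjoint and $A|_{H_2}$ completely non-selfadjoint. The hypothesis $A\neq A^*$ forces $H_2\neq\{0\}$: were $H_2$ trivial, we would have $A=A|_{H_1}$ selfadjoint on all of $H$, a contradiction. Next I would use the structural remark preceding the statement, namely that the minimal selfadjoint dilation of $A$ is the orthogonal sum of the selfadjoint part $A|_{H_1}$ and the minimal selfadjoint dilation $\L_2$ of the cns part $A|_{H_2}$; thus $\L$ acts in $\H=H_1\oplus\H_2$ with $H_2\subseteq\H_2$, and in particular its spectrum is the union $\sigma(\L)=\sigma(A|_{H_1})\cup\sigma(\L_2)$.

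Finally, since $H_2\neq\{0\}$, the operator $A|_{H_2}$ is a non-trivial completely non-selfadjoint MDO in its own Hilbert space $H_2$, so the theorem of Foia\c{s} applies to it directly and gives $\sigma(\L_2)=\R$. Hence $\sigma(\L)\supseteq\sigma(\L_2)=\R$, i.e.~the minimal dilation has spectrum equal to all of $\R$, which is exactly the contrapositive of the assertion. The only points that require care --- both supplied by the results already in hand --- are that the direct-sum structure of $\L$ really makes the spectra add as a union, and that Foia\c{s}'s theorem is genuinely applicable to the cns component $A|_{H_2}$ regarded as an MDO on the possibly smaller space $H_2$ rather than on all of $H$. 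I do not expect any essential obstacle here: the entire content of the corollary is carried by Foia\c{s}'s theorem, with the Langer decomposition serving only to isolate the completely non-selfadjoint part on which that theorem can be brought to bear.
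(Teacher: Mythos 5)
Your proof is correct and takes essentially the same route the paper intends: the corollary is stated there without a separate proof, precisely because it follows at once from the Foias proposition combined with the preceding remark that the minimal selfadjoint dilation of $A$ is the orthogonal sum of the selfadjoint part $A\vert_{H_1}$ and the minimal dilation of the completely non-selfadjoint part $A\vert_{H_2}$. Your contrapositive argument via the Langer decomposition simply spells out this implication, and all the steps (non-triviality of $H_2$, the union of spectra for the orthogonal sum, applicability of the Foias result to $A\vert_{H_2}$ as an MDO on $H_2$) are sound.
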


 \section{The Lagrange identity}

Boundary triples are a way of naturally associating `boundary operators' with an adjoint pair of operators. In the abstract setting, Weyl functions can be introduced and many questions e.g.~concerning the extension theory of operators can be investigated in the framework, see e.g.~\cite{BMNW08,DM91} for details. 
 We now discuss  a similar abstract framework for a  maximally dissipative operator and its anti-dissipative adjoint {which allows us to introduce $\Gamma$-operators associated with the imaginary part of the operator $A$. For the case of bounded operators this goes back to the work of the Odessa school on operator knots \cite{Bro71}.}

 \begin{lemma}\label{lemma 3.1}
 Let $A$ be a maximally dissipative operator on a Hilbert space $H$. Then there exists a Hilbert space $E$ and an operator $\Gamma:D(A)\to E$ which is bounded in the graph norm of $A$, has dense range in $E$ and such that for all $u,v\in D(A)$ we have
 \be \label{eq:Lagrange} \llangle Au,v\rrangle_H - \llangle u, Av\rrangle_H = i \llangle \Gamma u,\Gamma v\rrangle_{E}. \ee
 Similarly, there exists a Hilbert space $E_*$ and an operator $\Gamma_*:D(A^*)\to E_*$ which is bounded in the graph norm, has dense range in $E_*$ and such that for all $u,v\in D(A^*)$ we have
 \be \label{eq:LagrangeStar}\llangle A^*u,v\rrangle_H - \llangle u, A^*v\rrangle_H = -i \llangle \Gamma_* u,\Gamma_* v\rrangle_{E_*}.
 \ee
 \end{lemma}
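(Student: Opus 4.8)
The plan is to construct $E$, $\Gamma$ together, starting from the quadratic form that measures dissipativity. For $u \in D(A)$ the dissipativity of $A$ says $\Im\llangle Au,u\rrangle \geq 0$, i.e.~the sesquilinear form
\[
B(u,v) := \frac{1}{2i}\left(\llangle Au,v\rrangle_H - \llangle u,Av\rrangle_H\right)
\]
is a nonnegative Hermitian form on $D(A)$, since $B(u,u) = \Im\llangle Au,u\rrangle \geq 0$. The identity \eqref{eq:Lagrange} is precisely the statement that $B(u,v) = \tfrac12\llangle \Gamma u,\Gamma v\rrangle_E$, so the problem reduces to realising the nonnegative form $B$ as a Gram form of a densely-ranged map into a Hilbert space. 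This is the abstract analogue of writing $\Im A = \tfrac12\Gamma^*\Gamma$ in the bounded case, but carried out at the level of forms so as not to require that $\Im A$ exist as an operator.

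The key steps I would carry out are as follows. First, equip $D(A)$ with the pre-inner product $\llangle u,v\rrangle_B := B(u,v)$ and form the quotient $D(A)/\ker B$, where $\ker B = \{u : B(u,u)=0\}$; that $\ker B$ is a genuine subspace follows from the Cauchy--Schwarz inequality for the nonnegative form $B$. Completing the quotient with respect to the norm induced by $B$ produces a Hilbert space, which I would take to be $E$, and I would define $\Gamma u$ to be the image of $u$ in $E$ under the quotient-then-inclusion map. By construction $\llangle \Gamma u,\Gamma v\rrangle_E = B(u,v)$, which rearranges to exactly \eqref{eq:Lagrange}. Density of the range is automatic since the image of $D(A)$ is dense in its own completion. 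For boundedness in the graph norm, I would estimate
\[
\norm{\Gamma u}_E^2 = B(u,u) = \Im\llangle Au,u\rrangle \leq \norm{Au}_H\,\norm{u}_H \leq \tfrac12\left(\norm{Au}_H^2 + \norm{u}_H^2\right),
\]
which is controlled by the squared graph norm $\norm{u}_H^2 + \norm{Au}_H^2$, giving $\norm{\Gamma u}_E \leq \norm{u}_{\mathrm{graph}}$.

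For the adjoint statement \eqref{eq:LagrangeStar}, I would apply the same construction to $-A^*$. Since $A$ is maximally dissipative, $A^*$ is maximally anti-dissipative, so $-A^*$ is dissipative; the form
\[
B_*(u,v) := \frac{1}{2i}\left(\llangle u,A^*v\rrangle_H - \llangle A^*u,v\rrangle_H\right)
\]
satisfies $B_*(u,u) = -\Im\llangle A^*u,u\rrangle \geq 0$ on $D(A^*)$, and the identical quotient-completion argument yields $E_*$ and $\Gamma_*$ with $\llangle \Gamma_* u,\Gamma_* v\rrangle_{E_*} = B_*(u,v)$, which is \eqref{eq:LagrangeStar} after rearranging (note the sign on the right-hand side matches because of how $B_*$ is defined).

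The main obstacle I anticipate is not the algebra but the functional-analytic care needed in passing to the completion: $\Gamma$ a priori maps into the \emph{completion} of the quotient rather than into the quotient itself, so one must check that the formula \eqref{eq:Lagrange}, stated for $u,v \in D(A)$, survives this completion step. Because both sides of \eqref{eq:Lagrange} only ever pair $\Gamma u$ with $\Gamma v$ for $u,v$ in the original dense image, the identity holds verbatim with $\Gamma$ viewed as a map into $E$, so no extension by continuity is actually needed for \eqref{eq:Lagrange} itself. The subtler point worth stating explicitly is that this construction determines $(E,\Gamma)$ only up to unitary equivalence, and that any other pair implementing the same form differs by an isometry with dense range; I would flag this non-uniqueness, since the paper's later strategy exploits precisely the freedom to choose $\Gamma$ conveniently in concrete examples.
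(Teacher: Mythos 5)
Your construction is correct, but it takes a genuinely different route from the paper's. You build $E$ by the abstract form-completion (GNS-type) construction: quotient $D(A)$ by the kernel of the nonnegative form $B$, complete, and let $\Gamma$ be the quotient-then-inclusion map. Note that this needs nothing beyond dissipativity of $A$ for the first identity; maximality enters your argument only to guarantee that $-A^*$ is dissipative for the second identity. The paper instead uses maximality from the start: it pulls the form back through the resolvent, $b[f,g]:=a[(A+i)^{-1}f,(A+i)^{-1}g]$, proves the bound $|b[f,f]|\le\frac12\norm{f}^2$ on all of $H$, invokes the Riesz representation theorem to write $b[f,g]=\langle F_bf,g\rangle$ with $F_b\ge 0$ bounded, and then sets $\Gamma=\sqrt{F_b}\,(A+i)$ and $E=\overline{\Ran\sqrt{F_b}}\subseteq H$. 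What the paper's route buys is a concrete realization of $E$ as a subspace of $H$ and of $\Gamma$ as a composition of explicit operators, which lines up with the defect-operator identities $\frac12 D_T^2=(\Gamma(A+i)^{-1})^*\Gamma(A+i)^{-1}$ exploited later in the comparison with the Sz.-Nagy--Foias function; what your route buys is economy of means (no resolvent, no operator square root, no Riesz representation) and the observation that the first half of the lemma holds for any dissipative operator. One normalization slip should be fixed: with your pre-inner product $\langle u,v\rangle_B=B(u,v)=\frac{1}{2i}\left(\langle Au,v\rangle-\langle u,Av\rangle\right)$, the construction yields $\langle\Gamma u,\Gamma v\rangle_E=B(u,v)$ and hence $\langle Au,v\rangle-\langle u,Av\rangle=2i\langle\Gamma u,\Gamma v\rangle_E$, off from \eqref{eq:Lagrange} by a factor of $2$ --- indeed you yourself state earlier that the target is $B(u,v)=\frac12\langle\Gamma u,\Gamma v\rangle_E$. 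Rescaling $\Gamma$ by $\sqrt{2}$ (equivalently, completing with respect to $2B$, which is the paper's form $a$) repairs this, and your graph-norm estimate then reads $\norm{\Gamma u}_E^2\le\norm{Au}^2+\norm{u}^2$, so all the remaining assertions (density of range, validity of \eqref{eq:Lagrange} without any extension by continuity, and the uniqueness up to unitaries, which is the paper's Lemma \ref{lem:unique}) go through unchanged.
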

 
\begin{proof}  
Define the sesquilinear form
$$a[u,v]:=\frac{1}{i}\left(\llangle Au,v\rrangle_H - \llangle u, Av\rrangle_H\right) \quad \hbox{ for }\quad u,v\in D(A).$$
Since $A$ is dissipative, $a$ is positive. Moreover, since $A$ is maximal dissipative, $(A+i)^{-1}$ exists and we can define another positive sesquilinear form
$$b[f,g]:=a[(A+i)^{-1}f,(A+i)^{-1}g] \quad \hbox{ for }\quad f,g\in H.$$
Note that for $u\in D(A)$,
$$2\Im\llangle Au,u\rrangle\leq \norm{Au}^2+\norm{u}^2\quad\Rightarrow\quad \Im\llangle Au,u\rrangle\leq \frac14 \left(\norm{Au}^2+2\Im\llangle Au,u\rrangle+\norm{u}^2\right).$$
 Then, with $u=(A+i)^{-1}f$, for the quadratic form we have
\bea
|b[f,f]| &=&  2\left|\Im  \llangle Au,u\rrangle_H \right|\ \leq\ \frac12 \left(\norm{Au}^2+2\Im\llangle Au,u\rrangle+\norm{u}^2\right)\ =\ \frac12\norm{(A+i)u}^2=\frac12\norm{f}^2.
\eea
 Therefore, by the Riesz representation theorem \cite{RS55}, there exists a non-negative bounded operator $F_b:H\to H$ with $\norm{F_b}\leq 1/\sqrt{2}$ such that 
$$b[f,g]=\llangle F_bf,g\rrangle_H=\llangle \sqrt{F_b}f,\sqrt{F_b}g\rrangle_H,\quad f,g\in H $$ 
 and so
 $$ a[u,v]=\llangle \sqrt{F_b}(A+i)u,\sqrt{F_b}(A+i)v\rrangle_H, \quad u,v\in D(A).$$
Let $E=\overline{\Ran(\sqrt{F_b})}=\overline{\Ran(F_b)}$ equipped with the scalar product from $H$ and set $\Gamma=\sqrt{F_b}(A+i):D(A)\to E$. Then $E$ and $\Gamma$ have the required properties and \eqref{eq:Lagrange} holds.

To obtain $E_*$ and $\Gamma_*$, repeat the same construction for the maximal dissipative operator $-A^*$.
\end{proof} 

 \begin{remark}
In general, $E$ and $E_*$ may be of different dimensions, as can be seen in the examples below. However, in the special case of bounded imaginary part of $A$, we can always choose $E=E_*$ and $\Gamma=\Gamma_*$.
\end{remark}

{The next lemma shows that the operator $\Gamma$ is uniquely determined up to unitary transformations.}
\begin{lemma}\label{lem:unique}
{Assume that for any $u,v\in D(A)$ we have
\beq \llangle Au,v\rrangle_H - \llangle u, Av\rrangle_H = i \llangle \Gamma u,\Gamma v\rrangle_{E}= i \llangle \Gammat u,\Gammat v\rrangle_{\Et}, \eeq
where $\Gamma:D(A)\to E$ and $\Gammat:D(A)\to \Et$ are linear maps with dense range in some auxiliary Hilbert spaces $E$ and $\Et$, respectively.}

{
Then there exists a unitary map $V:\Et\to E$ such that $\Gamma u=V\Gammat u$ for all $u\in D(A)$.}
\end{lemma}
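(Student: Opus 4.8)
The plan is to build $V$ by the only reasonable rule, namely $V\Gammat u := \Gamma u$, first on the range of $\Gammat$ and then by continuity. The starting observation is that both right-hand sides in the hypothesis equal $\tfrac{1}{i}\left(\llangle Au,v\rrangle_H - \llangle u,Av\rrangle_H\right)$, so they coincide, giving the key identity
$$\llangle \Gamma u,\Gamma v\rrangle_{E} = \llangle \Gammat u,\Gammat v\rrangle_{\Et} \qquad \hbox{for all } u,v\in D(A).$$
Taking $v=u$ yields $\norm{\Gamma u}_{E} = \norm{\Gammat u}_{\Et}$, and in particular $\Gammat u = 0$ forces $\Gamma u = 0$.

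Next I would check that the assignment is well defined and isometric. Since $D(A)$ is a linear subspace and $\Gamma,\Gammat$ are linear, applying the displayed norm equality to $u-u'$ shows that $\Gammat u = \Gammat u'$ implies $\Gamma u = \Gamma u'$; hence $V\colon \Gammat u \mapsto \Gamma u$ is a well-defined linear map on $\Ran(\Gammat)$. The same key identity gives $\llangle V\Gammat u, V\Gammat v\rrangle_{E} = \llangle \Gamma u,\Gamma v\rrangle_{E} = \llangle \Gammat u,\Gammat v\rrangle_{\Et}$, so $V$ preserves inner products, i.e.~it is isometric on $\Ran(\Gammat)$. Because $\Ran(\Gammat)$ is dense in $\Et$, $V$ extends uniquely to a bounded operator $V\colon \Et\to E$, and the extension remains isometric.

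The one point I expect to require genuine care is surjectivity: an isometric embedding need not be onto, so I must use both density hypotheses. An isometry has closed range, and by construction the range of $V$ contains $\Ran(\Gamma)$, which is dense in $E$; a closed subspace containing a dense set must be all of $E$. Therefore $V$ is a surjective isometry, hence unitary, and $\Gamma u = V\Gammat u$ holds for every $u\in D(A)$ by the very definition of $V$. Everything else is routine polarization and density bookkeeping.
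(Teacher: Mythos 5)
Your proposal is correct and follows essentially the same route as the paper: define $V$ on $\Ran(\Gammat)$ by $\Gammat u\mapsto\Gamma u$, verify well-definedness and the isometry property from the identity $\llangle\Gamma u,\Gamma v\rrangle_E=\llangle\Gammat u,\Gammat v\rrangle_{\Et}$, extend by continuity, and use density of $\Ran(\Gamma)$ in $E$ for surjectivity. The only (minor, favorable) difference is that you establish well-definedness by applying the norm identity to $u-u'$, which is slightly more elementary than the paper's argument via density of $\Ran(\Gammat)$, and you spell out the closed-range argument for surjectivity that the paper leaves implicit.
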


\begin{proof}
{ We have that $\llangle \Gamma u,\Gamma v\rrangle_{E}=  \llangle \Gammat u,\Gammat v\rrangle_{\Et}$  for any $u,v\in D(A)$. Assume that $h\in\Ran(\Gamma)$, i.e.~$h=\Gamma u$ for some $u\in D(A)$. Assume that $h=\Gamma u'$ for some other $u'\in D(A)$. Then
\beq
\llangle \Gammat u, \Gammat v \rrangle_\Et \ =\  \llangle \Gamma u, \Gamma v \rrangle_E 
\ =\ \llangle \Gamma u', \Gamma v \rrangle_E 
\ =\ \llangle \Gammat u', \Gammat v \rrangle_\Et.
\eeq
Since $\Ran(\Gammat)$ is dense in $\Et$, we get $\Gammat u=\Gammat u'$. Therefore the map $V:\Ran(\Gamma)\to \Ran(\Gammat)$ given by $h=\Gamma u\mapsto \widetilde{h}=\Gammat u$ is one-to-one from
$\Ran(\Gamma)$ onto $\Ran(\Gammat)$  (by symmetry of $\Gamma$ and $\tilde \Gamma$).
As $\Gamma$ and $\Gammat$ are linear, also $V$ is linear.}

{
Setting $u=v$ we get
$$ \norm{ h}^2=\norm{  \Gamma u}^2=\norm{\Gammat u}^2=   \norm{\widetilde h}^2. 
$$
Thus $V$ is a unitary map from $\Ran(\Gammat)$ onto $\Ran (\Gamma)$.  
Its closure is a unitary operator $V$ from $\Et=\overline{\Ran(\Gammat)}$ onto $E=\overline {\Ran (\Gamma)}$ such that
$$\Gamma u=V\Gammat u, \quad \hbox{ for all } u\in  D(A),$$
as required.
}
\end{proof}

{Despite the lemma formally showing the `uniqueness' of $\Gamma$, its content is purely abstract and of little consequence in applications to particular examples. In most concrete applications, the construction using the square root used in the proof of Lemma \ref{lemma 3.1} does not lead to explicit formulae for the operators $\Gamma$ and $\Gamma_*$. However, in the following we will not make use of this construction of $\Gamma$ and $\Gamma_*$. The theory will instead be valid whenever the identity \eqref{eq:Lagrange} holds and $\Gamma$ and $E$ have the properties stated in the lemma. This is very much in the spirit of the boundary triples approach mentioned above. In our case, we use an abstract Lagrange identity, instead of an abstract Green identity.}

{
Besides the choice of $E,E_*,\Gamma$ and $\Gamma_*$ made in the lemma, this approach allows us the freedom of choosing the operators 
$\Gamma$, $\Gamma_*$ as two versions of the `roots' of the `imaginary part of $A$'. {Even in cases when the roots do not exist, this approach allows us to give meaning to the `roots', and in cases when the roots exist, it enables us to choose an alternative, simpler version of the `root'.} In particular examples, this allows us to choose factorisations \eqref{eq:Lagrange} and \eqref{eq:LagrangeStar} which depend explicitly on parameters of the problem (such as coefficients of a differential expression). Already for the case of a rank two dissipative perturbation of a selfadjoint operator the square root is not explicit, while it is easy to find the `correct' choice of $\Gamma$ in the Lagrange identity. This is illustrated in the following examples.
}

\begin{Example}
\begin{enumerate}
	\item We consider a Schr\"{o}dinger operator with dissipative potential and dissipative boundary condition:
	On $H=L^2(\R^+)$, let $$(Af)(x)=-f''(x)+q(x)f(x),$$
	where $q$ is a measurable and bounded complex-valued function on $\R^+$ with $\Im q(x)\geq 0$ for a.e.~$x\in\R^+$ and
	$$D(A):=\{f\in H^2(\R^+):   f'(0)=hf(0)\}$$
	with $\Im(h)\geq 0$. (The two conditions on the imaginary parts of $q$ and $h$ are necessary and sufficient for $A$ to be maximal dissipative.) Then for $u,v\in D(A)$, we have
	\bea
	\llangle Au, v\rrangle -\llangle u, Av\rrangle &=& u'(0) \overline{v(0)}- u(0) \overline{v'(0)} +2i \int_0^\infty \Im q(x)\ u(x) \overline{v(x)}\ dx \\
	&=& 2i \left( \Im h \ u(0) \overline{v(0)}+\int_0^\infty \Im q(x)\ u(x) \overline{v(x)}\ dx\right).
	\eea
	Let $\Omega=\{x\in\R: \Im q(x)>0\}$, 
	 set $E=\C\oplus L^2(\Omega)$ and 
	$$\Gamma u=\twovec{\sqrt{2\Im h} \ u(0)}{\sqrt{2\Im q}\  u\vert_\Omega}.$$ Then \eqref{eq:Lagrange} holds.
	We remark that in this example $E_*=E$ and $\Gamma_*$ acts in the same way as $\Gamma$, but has a different domain.

\item The next simple example shows that the boundary operators $\Gamma$ and $\Gamma_*$ and the spaces $E$ and $E_*$ can differ significantly. Let 
$$A=i\frac{d}{dx} \quad \hbox{ with }\quad D(A)=H^1_0(\R^+).$$
Then it is easy to check that $A$, being symmetric,  is a  maximally dissipative operator, and
$$A^*=i\frac{d}{dx} \quad \hbox{ with }\quad D(A^*)=H^1(\R^+),$$
is an anti-dissipative operator, and we can choose
$\Gamma=0$ with $E=\{0\}$ and $\Gamma_*u=u(0)$ with $E_*=\C$.
\end{enumerate}
\end{Example}

We conclude this section with two useful identities which follow from the Lagrange identity. 

\begin{lem}\label{lemma:2} (Abstract Green Function Identities) For $\lambda\in\C^+$ and $\mu\in\C^-$ we have
\be (A+\lambda)^{-1}-(A^*+\mu)^{-1} +(\lambda-\mu)(A^*+\mu)^{-1}(A+\lambda)^{-1} =  
 -i(\Gamma(A+\overline{\mu})^{-1})^*(\Gamma(A+\lambda)^{-1}) \label{Green} \ee
and
\be
(A+\lambda)^{-1}-(A^*+\mu)^{-1} +(\lambda-\mu)(A+\lambda)^{-1} (A^*+\mu)^{-1}=  
 -i(\Gamma_*(A^*+\overline{\lambda})^{-1})^*(\Gamma_*(A^*+\mu)^{-1}) 
\label{GreenStar}
\ee

\end{lem}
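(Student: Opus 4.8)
The plan is to derive both identities directly from the abstract Lagrange identities of Lemma~\ref{lemma 3.1} by feeding suitable resolvents into them and then transferring all resolvents onto the first slot of the inner product by duality. First I would record which resolvents are available. Since $A$ is maximal dissipative, $\sigma(A)\subseteq\overline{\C^+}$, so $(A+\lambda)^{-1}\in B(H)$ precisely when $-\lambda\in\C^-$, that is for $\lambda\in\C^+$; dually $A^*$ is anti-dissipative with $\sigma(A^*)\subseteq\overline{\C^-}$, so $(A^*+\mu)^{-1}\in B(H)$ for $\mu\in\C^-$. Because $\overline\mu\in\C^+$ and $\overline\lambda\in\C^-$, the resolvents $(A+\overline\mu)^{-1}$ and $(A^*+\overline\lambda)^{-1}$ also exist. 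I would also record the adjoint relations $\big((A+\overline\mu)^{-1}\big)^*=(A^*+\mu)^{-1}$ and $\big((A^*+\overline\lambda)^{-1}\big)^*=(A+\lambda)^{-1}$, which follow from $(A+\overline\mu)^*=A^*+\mu$ and $(A^*+\overline\lambda)^*=A+\lambda$.

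For \eqref{Green}, the key step is to apply the Lagrange identity \eqref{eq:Lagrange} to the pair $u=(A+\lambda)^{-1}f$ and $v=(A+\overline\mu)^{-1}g$, both of which lie in $D(A)$ for $f,g\in H$. Writing $Au=f-\lambda u$ and $Av=g-\overline\mu v$ and expanding the left-hand side $\llangle Au,v\rrangle-\llangle u,Av\rrangle$, the quadratic terms collapse to $\llangle f,v\rrangle-\llangle u,g\rrangle-(\lambda-\mu)\llangle u,v\rrangle$, where I use $\llangle u,\overline\mu v\rrangle=\mu\llangle u,v\rrangle$. Substituting the definitions of $u,v$ and moving each resolvent onto the first argument via the adjoint relations above rewrites this as $\llangle\big[(A^*+\mu)^{-1}-(A+\lambda)^{-1}-(\lambda-\mu)(A^*+\mu)^{-1}(A+\lambda)^{-1}\big]f,g\rrangle$. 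The right-hand side $i\llangle\Gamma u,\Gamma v\rrangle$ becomes $i\llangle(\Gamma(A+\overline\mu)^{-1})^*\Gamma(A+\lambda)^{-1}f,g\rrangle$. Since $f,g\in H$ are arbitrary, equating and multiplying by $-1$ yields \eqref{Green}.

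Identity \eqref{GreenStar} is proved in exactly the same manner, now using the adjoint Lagrange identity \eqref{eq:LagrangeStar} and the pair $u=(A^*+\mu)^{-1}f$, $v=(A^*+\overline\lambda)^{-1}g$ in $D(A^*)$. The only changes are the sign $-i$ already carried by \eqref{eq:LagrangeStar}, the interchanged roles of $\lambda$ and $\mu$ in the cross term, and the fact that the adjoint is now taken on the factor $(A^*+\overline\lambda)^{-1}$, so the product appears as $(A+\lambda)^{-1}(A^*+\mu)^{-1}$ rather than $(A^*+\mu)^{-1}(A+\lambda)^{-1}$; this accounts for the reversed order of resolvents in \eqref{GreenStar} compared to \eqref{Green}.

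I do not expect a serious obstacle here, as the argument is essentially a bookkeeping computation. The two points requiring genuine care are: first, confirming that all four resolvents entering the formulas are bounded on $H$ under the stated half-plane constraints on $\lambda,\mu$, so that every manipulation is legitimate (this is where the maximal dissipativity of $A$ and anti-dissipativity of $A^*$ are used); and second, tracking complex conjugation in the sesquilinear form correctly, in particular the relation $\llangle u,\overline\mu v\rrangle=\mu\llangle u,v\rrangle$ together with the resolvent adjoint identities, since a slip there would corrupt either the coefficient $(\lambda-\mu)$ or the factor $-i$ on the right-hand side.
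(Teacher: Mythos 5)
Your proposal is correct and takes essentially the same route as the paper: the paper's proof tests the operator identity against arbitrary $f,g\in H$, substitutes $f_\lambda=(A+\lambda)^{-1}f$ and $g_\mu=(A+\overline{\mu})^{-1}g$, and observes that the resulting scalar identity is exactly the Lagrange identity \eqref{eq:Lagrange} --- which is your computation read in reverse. Your bookkeeping of the resolvent adjoint relations, the conjugation in the sesquilinear form, and the half-plane conditions is accurate, so nothing is missing.
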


\begin{proof} The first result is equivalent to 
\bea \llangle (A+\lambda)^{-1}f,g\rangle - \langle (A^*+\mu)^{-1}f,g\rrangle  &=& 
 -i \langle (\Gamma(A+\lambda)^{-1})f,(\Gamma(A+\overline{\mu})^{-1})g\rangle \\
 && - (\lambda-\mu)\langle
 (A+\lambda)^{-1}f,(A+\overline{\mu})^{-1}g\rangle 
\eea
for all $f,g\in H$.
Defining $f_\lambda = (A+\lambda)^{-1}f$, $g_\mu = (A+\overline{\mu})^{-1}g$, this in turn is equivalent to
\[ \langle f_\lambda,(A+\overline{\mu})g_\mu\rangle - \langle (A+\lambda)f_\lambda,g_\mu\rangle
 = -i\langle \Gamma f_\lambda, \Gamma g_\mu\rangle - (\lambda-\mu)\langle f_\lambda, g_\mu\rangle \hbox{ for all } f_\lambda,g_\mu\in D(A). \]
This is precisely the Lagrange identity \eqref{eq:Lagrange}. 
The proof of \eqref{GreenStar} is similar.
\end{proof}

\begin{remark}\label{rem:gen}
The identities \eqref{Green} and \eqref{GreenStar} clearly extend to all $\lambda,\mu$ such that $-\lambda,-\mub\in\rho(A)$.
\end{remark}

\section{The \v{S}traus characteristic function and its properties}\label{Straus}

The first characteristic function, discussed below, was introduced by Liv\v{s}ic \cite{Liv73}. Later, by completely different methods, a characteristic function was introduced by Sz.-Nagy and Foia\lfhook{s} \cite{SFBK10} as part of their harmonic analysis of contractions. As was clarified by M.~Krein and Gohberg, the Sz-Nagy-Foia\lfhook{s} charactersitic function is a generalisation of the 
 Liv\v{s}ic characteristic function to a wider class of operators. Simultaneously, in a series of papers by \v{S}traus \cite{Str60,Str68}, another (unitarily equivalent) characteristic function was introduced in his study of extensions of symmetric operators and also in more general settings. We will introduce the \v{S}traus characteristic function in our setting and discuss its connection to the Sz-Nagy-Foia\lfhook{s} charactersitic function below. {This definition of the characteristic function goes back to the idea of the characteristic function of an operator knot as introduced by the Odessa school \cite{Bro71}. It is also related to the characteristic functions in the setting of boundary triples, introduced by Derkach and Malamud, see, e.g.~\cite{DM92,DM95}.}

We recall that in all of the following $A$ is a maximally dissipative operator on $H$ and $\Gamma,\Gamma_*$ and $E$, $E_*$ are operators and, respectively, spaces with the properties given in Lemma \ref{lemma 3.1}. 
We start with a simple identity.

\begin{lem}\label{lemma:2.1} 
For all $u\in D(A)$ and $z\in \rho(A^*)$ we have
\[ \| \Gamma_*(A^*-z)^{-1}(A-z) u \|^2 = 
 \| \Gamma u \|^2 - 2\Im(z)\| (A^*-z)^{-1}(A-z)u - u \|^2. \]
\end{lem}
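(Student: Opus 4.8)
The plan is to prove the identity by starting from the right-hand side and relating everything back to the Lagrange identity \eqref{eq:Lagrange} and its starred version \eqref{eq:LagrangeStar}. Fix $u \in D(A)$ and $z \in \rho(A^*)$, and introduce the auxiliary vector
\[ w := (A^*-z)^{-1}(A-z)u, \]
which lies in $D(A^*)$. The quantity we must control is $\|\Gamma_* w\|^2$, and the appearance of $\Gamma_*$ tells us to apply the starred Lagrange identity \eqref{eq:LagrangeStar} with both arguments equal to $w$. Since $-i\langle \Gamma_* w, \Gamma_* w\rangle_{E_*}$ is the imaginary-part term, taking $v=w$ in \eqref{eq:LagrangeStar} gives
\[ \|\Gamma_* w\|_{E_*}^2 = 2\,\Im\langle A^* w, w\rangle_H. \]
So the whole problem reduces to computing $2\,\Im\langle A^*w, w\rangle$ and matching it against $\|\Gamma u\|^2 - 2\Im(z)\|w-u\|^2$.

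The key algebraic move is to note that $(A^*-z)w = (A-z)u$ by the definition of $w$, hence $A^*w = (A-z)u + zw$. Substituting this into $\langle A^*w, w\rangle = \langle (A-z)u, w\rangle + z\langle w, w\rangle$ and taking imaginary parts, the second term contributes $\Im(z)\|w\|^2$. The plan is then to expand $\langle (A-z)u, w\rangle = \langle (A-z)u, u\rangle + \langle(A-z)u, w-u\rangle$ and again use $(A-z)u = (A^*-z)w$ in the second piece to convert $\langle (A-z)u, w-u\rangle = \langle (A^*-z)w, w-u\rangle$, so that its imaginary part can be organised around the term $\|w-u\|^2$ after careful bookkeeping of the $z$-dependent contributions. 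The term $\langle (A-z)u, u\rangle = \langle Au, u\rangle - z\|u\|^2$ feeds $\Im\langle Au, u\rangle = \tfrac12\|\Gamma u\|^2$ via \eqref{eq:Lagrange} with $v=u$, which is exactly the source of the $\|\Gamma u\|^2$ on the right-hand side.

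I expect the main obstacle to be the bookkeeping of the cross terms and the $z$-versus-$\bar z$ asymmetries: the identity mixes $\langle A^* w, w\rangle$, $\langle Au, u\rangle$, and inner products of $w-u$ with $w$ and $u$, and the $-2\Im(z)\|w-u\|^2$ term must emerge with precisely the right coefficient and sign after the imaginary parts of several $z$-weighted terms are collected. A clean way to avoid errors is to set $r := w - u$ and systematically rewrite every quantity in terms of $u$, $r$, and the two operator actions, expanding $\|w-u\|^2 = \|r\|^2$ and tracking which inner products produce $\Im(z)\|r\|^2$, $\Im(z)\|u\|^2$, and genuine cross terms that must cancel. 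The cancellation of the residual cross terms $\Im\langle \cdot,\cdot\rangle$ between $u$ and $r$ is where the two Lagrange identities must conspire, and verifying that they do is the heart of the argument; once the coefficients line up, rearranging to isolate $\|\Gamma_* w\|^2$ on the left yields the claimed formula.
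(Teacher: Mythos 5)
Your overall skeleton --- apply \eqref{eq:LagrangeStar} with both arguments equal to $w:=(A^*-z)^{-1}(A-z)u$, use $A^*w=(A-z)u+zw$, and feed in $\Im\langle Au,u\rangle=\tfrac12\|\Gamma u\|^2$ from \eqref{eq:Lagrange} --- is the same as the paper's, but the proposal has two concrete problems. The first is a sign error at the very first step: \eqref{eq:LagrangeStar} with $u=v=w$ reads $2i\,\Im\langle A^*w,w\rangle=-i\|\Gamma_*w\|^2$, so $\|\Gamma_*w\|^2=-2\,\Im\langle A^*w,w\rangle$, not $+2\,\Im\langle A^*w,w\rangle$. (The minus sign is forced: $A^*$ is \emph{anti}-dissipative, so $\Im\langle A^*w,w\rangle\le 0$.) Carried through, your sign would negate the whole identity.

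The second problem is a genuine gap: the route you propose for the cross term is circular and cannot close. If you write $\langle (A-z)u,w\rangle=\langle (A-z)u,u\rangle+\langle (A-z)u,w-u\rangle$ and replace $(A-z)u$ by $(A^*-z)w$ in the second piece, then expanding
\[
\langle (A^*-z)w,w-u\rangle=\langle A^*w,w\rangle-z\|w\|^2-\langle A^*w,u\rangle+z\langle w,u\rangle
\]
makes the unknown $\langle A^*w,w\rangle$ reappear with coefficient one; it cancels against the left-hand side, and what survives is precisely the imaginary part of the tautology $\langle A^*w,u\rangle=\langle (A-z)u,u\rangle+z\langle w,u\rangle$ (which is just the definition of $w$ again). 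No bookkeeping of $u$, $r=w-u$ can extract $\Im\langle A^*w,w\rangle$ from an identity in which it has cancelled out. The missing idea --- and what the paper actually does --- is to move the operator \emph{across} the inner product via the plain adjoint relation $\langle Au,w\rangle=\langle u,A^*w\rangle$ (legitimate since $u\in D(A)$, $w\in D(A^*)$; note this is neither of the two Lagrange identities, which only pair vectors within $D(A)$ or within $D(A^*)$). This gives $\langle (A-z)u,w\rangle=\langle u,(A^*-\bar z)w\rangle$, and since $(A^*-\bar z)w=(A-z)u+2i\,\Im(z)\,w$, one gets $\langle (A-z)u,w\rangle=\langle u,(A-z)u\rangle-2i\,\Im(z)\langle u,w\rangle$. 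Taking imaginary parts and using \eqref{eq:Lagrange} then yields $\Im\langle A^*w,w\rangle=-\tfrac12\|\Gamma u\|^2+\Im(z)\bigl(\|u\|^2-2\Re\langle u,w\rangle+\|w\|^2\bigr)=-\tfrac12\|\Gamma u\|^2+\Im(z)\|w-u\|^2$, and multiplying by $-2$ gives the lemma. It is the passage from $z$ to $\bar z$ in the adjoint move that generates the $\Im(z)$ cross terms completing the square $\|w-u\|^2$; your substitution keeps $z$ on the same side of the inner product and therefore can never produce them. With the sign corrected and this step supplied, your argument becomes exactly the computation in the paper.
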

\begin{proof}
This is an explicit calculation. For $u\in D(A)$ we have, by the second Lagrange identity \eqref{eq:LagrangeStar},
\be
\begin{array}{lcl} & &  \hspace{-1.2cm}i\langle \Gamma_* (A^*-z)^{-1}(A-z)u,\Gamma_* (A^*-z)^{-1}(A-z)u\rangle_{E_*} \\
 & & \\
& = & -\langle A^*(A^*-z)^{-1}(A-z)u,(A^*-z)^{-1}(A-z)u\rangle \\
 & & + \langle (A^*-z)^{-1}(A-z)u,A^*(A^*-z)^{-1}(A-z)u\rangle \\
 & & \\
&  =  & -\langle (A-z)u,(A^*-z)^{-1}(A-z)u\rangle + \langle (A^*-z)^{-1}(A-z)u,(A-z)u\rangle \\
& &  -2i\Im(z)\| (A^*-z)^{-1}(A-z)u \|^2 \\ & & \\
& = & -\langle u, (A^*-\overline{z})(A^*-z)^{-1}(A-z)u\rangle 
   + \langle (A^*-\overline{z})(A^*-z)^{-1}(A-z)u,u\rangle \\
 & &   -2i\Im(z) \| (A^*-z)^{-1}(A-z)u \|^2 \\ & & \\
& = & -\langle u,(A-z)u\rangle + \langle (A-z)u,u\rangle + 2i\Im(z)\langle u,(A^*-z)^{-1}(A-z)u\rangle \\
 & &  + 2i\Im(z)\langle (A^*-z)^{-1}(A-z)u,u\rangle -2i\Im(z)\| (A^*-z)^{-1}(A-z)u \|^2. \end{array}
\ee
By the first Lagrange identity \eqref{eq:Lagrange}, this is equal to
\[ i \| \Gamma u \|^2  -2i\Im(z)\| u \|^2   +4i\Im(z)\Re\langle u,(A^*-z)^{-1}(A-z)u\rangle
 -2i\Im(z)\| (A^*-z)^{-1}(A-z)u \|^2,
\]
which, in turn, simplifies to
\[ i\left[ \| \Gamma u \|^2 -2\Im(z) \| (A^*-z)^{-1}(A-z)u-u \|^2\right], \]
proving the identity.
\end{proof}

\begin{cor} \label{corollary:2.2} For $u\in D(A)$ and $z\in \Cc^+$,
\be \| \Gamma_* (A^*-z)^{-1}(A-z)u\| \leq \| \Gamma u \|. \label{eq:contraction}\ee
Hence, there exists a unique contraction $S(z):E\to E_*$, analytic in the upper half-plane, such that
\be S(z)\Gamma u = \Gamma_*(A^*-z)^{-1}(A-z)u \hbox{ for all } u\in D(A).\label{eq:S} \ee
\end{cor}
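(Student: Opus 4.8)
The plan is to read the norm identity of Lemma~\ref{lemma:2.1} as the statement that a certain map is a contraction, and then to extend it by density. The inequality \eqref{eq:contraction} itself is immediate: since $A$ is maximal dissipative we have $\sigma(A)\subseteq\overline{\Cc^+}$, hence $\sigma(A^*)\subseteq\overline{\Cc^-}$, so every $z\in\Cc^+$ lies in $\rho(A^*)$ and Lemma~\ref{lemma:2.1} applies. For such $z$ one has $\Im(z)>0$, so the subtracted term $-2\Im(z)\|(A^*-z)^{-1}(A-z)u-u\|^2$ in Lemma~\ref{lemma:2.1} is non-positive; discarding it yields $\|\Gamma_*(A^*-z)^{-1}(A-z)u\|^2\le\|\Gamma u\|^2$, which is \eqref{eq:contraction}.

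Next I would use \eqref{eq:contraction} to define $S(z)$ on $\Ran(\Gamma)$. The crucial point is well-definedness: if $\Gamma u_1=\Gamma u_2$ for $u_1,u_2\in D(A)$, then applying \eqref{eq:contraction} to $w:=u_1-u_2$ gives $\|\Gamma_*(A^*-z)^{-1}(A-z)w\|\le\|\Gamma w\|=0$, so the two images coincide. Hence $\Gamma u\mapsto\Gamma_*(A^*-z)^{-1}(A-z)u$ is a well-defined map on $\Ran(\Gamma)$; it is linear because both $\Gamma$ and $u\mapsto\Gamma_*(A^*-z)^{-1}(A-z)u$ are linear, and \eqref{eq:contraction} says exactly that it is a contraction from $(\Ran(\Gamma),\|\cdot\|_E)$ into $E_*$. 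Since $\Ran(\Gamma)$ is dense in $E$ by Lemma~\ref{lemma 3.1}, this contraction extends uniquely to a contraction $S(z):E\to E_*$; density together with the defining relation \eqref{eq:S} also makes $S(z)$ unique.

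Finally I would verify analyticity. For fixed $u\in D(A)$, the $E_*$-valued map $z\mapsto\Gamma_*(A^*-z)^{-1}(A-z)u=S(z)\Gamma u$ is analytic on $\Cc^+$, since $(A^*-z)^{-1}$ is analytic there (in the graph norm of $A^*$, using $A^*(A^*-z)^{-1}=I+z(A^*-z)^{-1}$) and $\Gamma_*$ is a fixed operator bounded in that graph norm. Thus $z\mapsto S(z)h$ is analytic for every $h$ in the dense set $\Ran(\Gamma)$. The step needing the most care is upgrading this to analyticity of $S(\cdot)$ on all of $E$: here the uniform bound $\|S(z)\|\le1$ does the work. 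For arbitrary $h\in E$ and $h_*\in E_*$, choosing $h_n\in\Ran(\Gamma)$ with $h_n\to h$, the estimate $|\langle S(z)(h-h_n),h_*\rangle|\le\|h-h_n\|\,\|h_*\|$ shows that the analytic scalar functions $z\mapsto\langle S(z)h_n,h_*\rangle$ converge uniformly on compact subsets of $\Cc^+$, so their limit $z\mapsto\langle S(z)h,h_*\rangle$ is analytic. Weak analyticity of the uniformly bounded family $(S(z))_{z\in\Cc^+}$ then yields its analyticity, completing the proof.
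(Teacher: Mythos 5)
Your proof is correct and follows essentially the same route as the paper's: define $S(z)$ on $\Ran(\Gamma)$ by \eqref{eq:S}, use \eqref{eq:contraction} (read off from Lemma \ref{lemma:2.1} since $\Im(z)>0$) for well-definedness and contractivity, extend by density, and deduce analyticity from that of the right-hand side of \eqref{eq:S}. The only difference is that you spell out the details the paper leaves implicit — in particular the well-definedness argument via $w=u_1-u_2$ and the upgrade from analyticity on the dense set $\Ran(\Gamma)$ to analyticity of $S(\cdot)$ on all of $E$ using the uniform bound $\|S(z)\|\le 1$ — which is a faithful elaboration rather than a different approach.
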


\begin{proof}
Define $S(z)$ on $\Ran(\Gamma)$ by \eqref{eq:S}. Then $S(z)$ is both well-defined and contractive by \eqref{eq:contraction}. Therefore, it can be uniquely extended to a contraction on $E$. Analyticity follows from analyticity of the right hand side of \eqref{eq:S}. 
\end{proof} 

\begin{lem}\label{lemma:2.1b} For $u\in D(A^*)$ and $z\in\rho(A)$,
\[ \| \Gamma (A-z)^{-1}(A^*-z)u\|^2 = \|\Gamma_* u \|^2 + 2\Im(z)\| (A-z)^{-1}(A^*-z)u + u \|^2. \]
Correspondingly, for $z\in\C^-$ there exists a contraction $S_*(z):E_*\to E$, analytic in the lower half-plane, such that
\be S_*(z)\Gamma_* u = \Gamma (A-z)^{-1}(A^*-z)u. \label{Sstar} \ee
\end{lem}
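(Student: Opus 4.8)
The plan is to mirror the proof of Lemma \ref{lemma:2.1} exactly, but with the roles of $A$ and $A^*$ interchanged and the sign of the dissipativity reversed. The key structural observation is that $A^*$ is anti-dissipative, so $-A^*$ is maximally dissipative, and the Lagrange identity \eqref{eq:LagrangeStar} for $A^*$ plays the role that \eqref{eq:Lagrange} played before, while \eqref{eq:Lagrange} now plays the role of the ``starred'' identity. Concretely, for $u \in D(A^*)$ and $z \in \rho(A)$, I would start from
\beq
i\langle \Gamma (A-z)^{-1}(A^*-z)u, \Gamma (A-z)^{-1}(A^*-z)u \rangle_E,
\eeq
apply the first Lagrange identity \eqref{eq:Lagrange} to expand this as $\langle A(\cdots),(\cdots)\rangle - \langle (\cdots), A(\cdots)\rangle$ where $(\cdots) = (A-z)^{-1}(A^*-z)u \in D(A)$, and then telescope $A = (A-z) + z$ and $A - \overline{z} = (A - z) + 2i\Im(z)$ through the resolvents in the same bookkeeping sequence as in Lemma \ref{lemma:2.1}.

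First I would set $w := (A-z)^{-1}(A^*-z)u$, noting $w \in D(A)$ and $(A-z)w = (A^*-z)u$. Substituting into the expansion collapses the outer resolvents: the terms $\langle (A-z)w, \cdot\rangle$ become $\langle (A^*-z)u, \cdot\rangle$, and after moving $(A^*-\overline{z})$ back onto $u$ via $(A^*-z)^{-1}(A^*-\overline z) = I + 2i\Im(z)(A^*-z)^{-1}$ one is left with inner products of the form $\langle A^* u, u\rangle - \langle u, A^* u\rangle$ plus $\Im(z)$-weighted cross terms. At that stage the second Lagrange identity \eqref{eq:LagrangeStar} converts $\langle A^*u,u\rangle - \langle u, A^*u\rangle$ into $-i\|\Gamma_* u\|^2$. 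The crucial sign difference is exactly here: the minus sign in \eqref{eq:LagrangeStar} (versus the plus in \eqref{eq:Lagrange}) is what flips the $\Im(z)$ term from $-2\Im(z)\|\cdots\|^2$ to $+2\Im(z)\|\cdots\|^2$ and turns the subtraction $w - u$ inside the norm into the addition $w + u$. Collecting the real part of the cross terms into a perfect square $\|w + u\|^2$ yields the claimed identity.

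Once the identity is established, the contraction statement is immediate and parallels Corollary \ref{corollary:2.2}: for $z \in \C^-$ we have $\Im(z) < 0$, so the term $2\Im(z)\|w+u\|^2$ is non-positive, giving $\|\Gamma (A-z)^{-1}(A^*-z)u\| \leq \|\Gamma_* u\|$. Defining $S_*(z)$ on $\Ran(\Gamma_*)$ by \eqref{Sstar}, this inequality shows $S_*(z)$ is well-defined (the bound controls $\|\Gamma(\cdots)\|$ by $\|\Gamma_* u\|$, so equal values of $\Gamma_* u$ force equal values of the right-hand side) and contractive; since $\Ran(\Gamma_*)$ is dense in $E_*$, it extends uniquely to a contraction $E_* \to E$. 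Analyticity in the lower half-plane follows from analyticity of $z \mapsto (A-z)^{-1}$ on $\rho(A) \supseteq \C^-$.

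I expect the main obstacle to be purely bookkeeping: keeping the factors $(A^*-z)^{-1}$, $(A-z)^{-1}$ and the conjugates $\overline z$ in the correct positions through the telescoping, and in particular verifying that the sign reversal in \eqref{eq:LagrangeStar} propagates correctly to produce $+2\Im(z)$ and the $\|w+u\|^2$ rather than $\|w-u\|^2$. There is no conceptual difficulty beyond the symmetry $A \leftrightarrow A^*$, $\Gamma \leftrightarrow \Gamma_*$, $z \in \C^+ \leftrightarrow z \in \C^-$; the care is entirely in matching the chain of substitutions in Lemma \ref{lemma:2.1} line by line under this dualization.
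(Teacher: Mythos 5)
Your strategy --- dualize Lemma \ref{lemma:2.1} and Corollary \ref{corollary:2.2} under $A \leftrightarrow A^*$, $\Gamma \leftrightarrow \Gamma_*$, $\C^+ \leftrightarrow \C^-$ --- is precisely the paper's proof, which consists of the single remark that the argument is analogous. However, your sign bookkeeping fails at the one point where care is actually needed. Set $w = (A-z)^{-1}(A^*-z)u$, so $w \in D(A)$ and $(A-z)w = (A^*-z)u$. Then \eqref{eq:Lagrange} gives $i\|\Gamma w\|^2 = \langle Aw,w\rangle - \langle w,Aw\rangle$, and using $Aw = (A^*-z)u + zw$ together with the adjoint pairing $\langle (A^*-z)u,w\rangle = \langle u,(A-\bar z)w\rangle$ one finds
\[
i\|\Gamma w\|^2 = i\|\Gamma_* u\|^2 + 2i\Im(z)\bigl[\,\|u\|^2 - 2\Re\langle u,w\rangle + \|w\|^2\bigr] = i\|\Gamma_* u\|^2 + 2i\Im(z)\|w - u\|^2 .
\]
The minus sign in \eqref{eq:LagrangeStar} does flip the coefficient of $\Im(z)$ from $-2$ to $+2$, as you say; but it cannot alter the relative signs of $\|u\|^2$, $-2\Re\langle u,w\rangle$ and $\|w\|^2$, which all carry one common overall sign, so the perfect square stays $\|w-u\|^2$ and never becomes $\|w+u\|^2$. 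Your claimed mechanism (``the sign reversal \dots\ turns the subtraction $w-u$ into the addition $w+u$'') is false. In fact the identity as printed in the lemma is itself false: take $H=\C$, $A=i$ (so $\Gamma = \Gamma_* = \sqrt{2}$) and $z = 2i$; then $w = 3u$, so $\|\Gamma w\|^2 = 18|u|^2$, whereas $\|\Gamma_* u\|^2 + 2\Im(z)\|w+u\|^2 = 66|u|^2$, while $\|\Gamma_* u\|^2 + 2\Im(z)\|w-u\|^2 = 18|u|^2$. A correct execution of your plan would therefore have produced the identity with $-u$ and exposed the sign typo in the statement, rather than reproducing the $+u$.

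Two further points. First, your intermediate step ``moving $(A^*-\bar z)$ back onto $u$ via $(A^*-z)^{-1}(A^*-\bar z) = I + 2i\Im(z)(A^*-z)^{-1}$'' is not available here: only $z \in \rho(A)$ is assumed, and for the points that matter ($z \in \C^-$) one may have $z \in \sigma(A^*)$, since $\sigma(A^*) \subseteq \overline{\C^-}$. The correct dualization never uses the resolvent of $A^*$; operators are moved across the inner product only through $\langle A^*u, w\rangle = \langle u, Aw\rangle$ with $w \in D(A)$, and the only resolvent appearing is $(A-z)^{-1}$. Second, the damage is confined to the identity itself: since $2\Im(z)\|w-u\|^2 \le 0$ for $z \in \C^-$, the inequality $\|\Gamma(A-z)^{-1}(A^*-z)u\| \le \|\Gamma_* u\|$ still holds, and your argument for well-definedness of $S_*(z)$ on $\Ran\Gamma_*$, contractivity, the extension to $E_*$, and analyticity in $\C^-$ all go through unchanged.
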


\begin{proof}
This is analogous to the proof of Lemma \ref{lemma:2.1} and Corollary \ref{corollary:2.2}.
\end{proof}

We now wish to extend $S(z)$ by \eqref{eq:S} to all $z\in\rho(A^*)$ and  $S_*(z)$ by \eqref{Sstar} to all $z\in\rho(A)$.

\begin{lemma}
$S(z)$ is well-defined on $\Ran\Gamma$ for $z\in\rho(A^*)$ and $S_*(z)$ is well-defined on $\Ran\Gamma_*$ for $z\in\rho(A)$.
\end{lemma}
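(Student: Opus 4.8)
The plan is to treat both assertions by reducing them to a single algebraic observation. By linearity, $S(z)$ is well-defined on $\Ran\Gamma$ by \eqref{eq:S} exactly when the right-hand side $\Gamma_*(A^*-z)^{-1}(A-z)u$ vanishes for every $u\in D(A)$ with $\Gamma u=0$; the situation for $S_*(z)$ is analogous. So everything comes down to understanding the kernel of $\Gamma$ (respectively $\Gamma_*$). The key first step is that $\Gamma u=0$ is far more rigid than it looks: substituting $\Gamma u=0$ into the Lagrange identity \eqref{eq:Lagrange} gives $\langle Au,v\rangle_H=\langle u,Av\rangle_H$ for all $v\in D(A)$, which is precisely the statement that $u\in D(A^*)$ with $A^*u=Au$.

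With this identification the resolvent expression trivialises. Since $A^*u=Au$ we have $(A-z)u=(A^*-z)u$, hence for every $z\in\rho(A^*)$ (and $\C^+\subseteq\rho(A^*)$, as $\sigma(A^*)=\overline{\sigma(A)}\subseteq\overline{\C^-}$) we get $(A^*-z)^{-1}(A-z)u=u$. Consequently $\Gamma_*(A^*-z)^{-1}(A-z)u=\Gamma_*u$, a quantity independent of $z$ and well-defined because $u\in D(A^*)$. It only remains to see this constant is zero, and for that I would invoke the already-proven $\C^+$ case: choosing any $z_0\in\C^+$, Corollary \ref{corollary:2.2} gives $\|\Gamma_*u\|=\|\Gamma_*(A^*-z_0)^{-1}(A-z_0)u\|\le\|\Gamma u\|=0$. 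Hence $\Gamma_*(A^*-z)^{-1}(A-z)u=0$ for all $z\in\rho(A^*)$, and $S(z)$ is well-defined on $\Ran\Gamma$.

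The second assertion follows by the symmetric argument. If $u\in D(A^*)$ with $\Gamma_*u=0$, then \eqref{eq:LagrangeStar} yields $\langle A^*u,v\rangle_H=\langle u,A^*v\rangle_H$ for all $v\in D(A^*)$, so $u\in D(A^{**})$ with $A^{**}u=A^*u$; since $A$ is maximal dissipative it is closed, whence $A^{**}=A$ and we conclude $u\in D(A)$ with $Au=A^*u$. As before $(A-z)u=(A^*-z)u$ gives $(A-z)^{-1}(A^*-z)u=u$ for every $z\in\rho(A)\supseteq\C^-$, so $\Gamma(A-z)^{-1}(A^*-z)u=\Gamma u$ is $z$-independent; evaluating at any $z_0\in\C^-$ and using the contraction bound furnished by Lemma \ref{lemma:2.1b} forces $\Gamma u=0$, and thus $S_*(z)$ is well-defined on $\Ran\Gamma_*$ for all $z\in\rho(A)$.

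I expect the only real obstacle to be spotting the first step — that $\Gamma u=0$ is not merely a condition in the auxiliary space $E$ but genuinely places $u$ in $D(A^*)$ with $A$ and $A^*$ agreeing on it. Everything after that is forced. I note in particular that this algebraic route handles all of $\rho(A^*)$ (respectively $\rho(A)$) at once, so it is preferable to the alternative of analytically continuing the manifestly holomorphic map $z\mapsto\Gamma_*(A^*-z)^{-1}(A-z)u$ off $\C^+$, which would additionally require controlling the connected components of the resolvent set.
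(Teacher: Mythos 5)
Your proof is correct and takes essentially the same approach as the paper: both reduce well-definedness to showing that $\Gamma u=0$ implies $\Gamma_*(A^*-z)^{-1}(A-z)u=0$, and both extract from the Lagrange identity \eqref{eq:Lagrange} the key fact that $\Gamma u=0$ forces $u\in D(A^*)$ with $A^*u=Au$, so that the expression collapses to $\Gamma_* u$. The only difference is the closing step---the paper obtains $\norm{\Gamma_*u}=0$ directly from the two Lagrange identities via $-i\norm{\Gamma_* u}^2=\llangle A^*u,u\rrangle-\llangle u,A^*u\rrangle=\llangle Au,u\rrangle-\llangle u,Au\rrangle=i\norm{\Gamma u}^2=0$, whereas you invoke the contraction bound of Corollary \ref{corollary:2.2}---an inessential variation since that corollary rests on the same identities, while your explicit treatment of the $S_*$ case (using closedness of $A$ and $A^{**}=A$) simply fills in what the paper's ``the proof is similar'' leaves to the reader.
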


\begin{proof}
We prove the result for $S(z)$, the proof for $S_*(z)$ is similar. We  need to show that if $u\in D(A)$ with $\Gamma u=0$, then for any $z\in\rho(A^*)$ we have $\Gamma_* (A^*-z)^{-1}(A-z)u=0$. If $\Gamma u=0$ then by \eqref{eq:Lagrange}, we have that $\llangle Au,v\rrangle=\llangle u, Av\rrangle$ for any $v\in D(A)$. This implies that $u\in D(A^*)$ with $A^*u=Au$, so $\Gamma_* (A^*-z)^{-1}(A-z)u=\Gamma_* u$. Using \eqref{eq:LagrangeStar} and again \eqref{eq:Lagrange}, we get that 
$$-i\norm{\Gamma_* u}^2=\llangle A^*u,u\rrangle - \llangle u,A^*u\rrangle=\llangle Au,u\rrangle - \llangle u,Au\rrangle = i \norm{\Gamma u}^2=0,$$
as required.
\end{proof}

The following lemma gives a useful identity for the difference of $S$ at two different points. 
\begin{lemma}
For $\mu,\mut\in\rho(A^*)$, we have the following identity:
\be\label{eq:Sdiff} S(\mu)-S(\mut)= i(\mu-\mut) \left(\Gamma_* (A^*-\mu)^{-1}\right)\left(\Gamma(A-\mutb)^{-1}\right)^* \hbox{ on } \Ran\Gamma.\ee
\end{lemma}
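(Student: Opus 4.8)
The plan is to start from the defining relation \eqref{eq:S}, apply it at the two points $\mu$ and $\mut$, and reduce the whole statement to a single vector identity in $H$ that can be verified directly against the Lagrange identity \eqref{eq:Lagrange}. Writing $w_z := (A^*-z)^{-1}(A-z)u$ for $u\in D(A)$, the definition \eqref{eq:S} gives $(S(\mu)-S(\mut))\Gamma u = \Gamma_*(w_\mu - w_{\mut})$, so everything comes down to computing the difference $w_\mu - w_{\mut}$ and then applying $\Gamma_*$.

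First I would treat this difference purely algebraically. Splitting $(A-\mu)u = (A-\mut)u + (\mut-\mu)u$ and inserting the resolvent identity $(A^*-\mut)^{-1} = (A^*-\mu)^{-1} + (\mut-\mu)(A^*-\mu)^{-1}(A^*-\mut)^{-1}$, the terms containing $(A^*-\mut)^{-1}(A-\mut)u = w_{\mut}$ collect, yielding
\[ w_\mu - w_{\mut} = (\mu-\mut)(A^*-\mu)^{-1}\bigl[w_{\mut} - u\bigr]. \]
After applying $\Gamma_*$, this reduces the proposition to identifying the interior vector $w_{\mut}-u = (A^*-\mut)^{-1}(A-\mut)u - u$ with $i(\Gamma(A-\mutb)^{-1})^*\Gamma u$ as an element of $H$.

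The main step, and the only place requiring care, is this last $H$-identity, because $u\in D(A)$ need not lie in $D(A^*)$, so one cannot simply manipulate $(A^*-\mut)u$. I would therefore prove it weakly, testing both sides against vectors of the special form $\phi = (A-\mutb)\psi$ with $\psi\in D(A)$; since $\mut\in\rho(A^*)$ forces $\mutb\in\rho(A)$, the map $A-\mutb:D(A)\to H$ is a bijection and such $\phi$ exhaust $H$. On the right, moving the adjoint across and using $(A-\mutb)^{-1}\phi = \psi$ gives $i\langle\Gamma u,\Gamma\psi\rangle_E$, which by \eqref{eq:Lagrange} equals $\langle Au,\psi\rangle - \langle u,A\psi\rangle$. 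On the left, the relation $((A^*-\mut)^{-1})^* = (A-\mutb)^{-1}$ collapses the first term to $\langle (A-\mut)u,\psi\rangle$, and the two $\mut$-dependent contributions cancel, again leaving exactly $\langle Au,\psi\rangle - \langle u,A\psi\rangle$. Equality against all $\phi\in H$ then yields the desired vector identity.

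Finally I would assemble the pieces: substituting $w_{\mut}-u = i(\Gamma(A-\mutb)^{-1})^*\Gamma u$ into the algebraic formula for $w_\mu - w_{\mut}$ and applying $\Gamma_*$ produces $(S(\mu)-S(\mut))\Gamma u = i(\mu-\mut)(\Gamma_*(A^*-\mu)^{-1})(\Gamma(A-\mutb)^{-1})^*\Gamma u$, which is precisely \eqref{eq:Sdiff} on $\Ran\Gamma$. The sole genuine obstacle is the domain mismatch in the middle step; the weak-testing device circumvents it and keeps the argument entirely within the Lagrange identity, without needing the Green identities of Lemma \ref{lemma:2}.
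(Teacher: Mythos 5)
Your proof is correct and follows essentially the same route as the paper: both apply \eqref{eq:S} at the two points, use the resolvent identity to factor out $(\mu-\mut)\,\Gamma_*(A^*-\mu)^{-1}$, and thereby reduce the claim to the single vector identity $(A^*-\mut)^{-1}(A-\mut)u - u = i\left(\Gamma(A-\mutb)^{-1}\right)^*\Gamma u$ in $H$. The only difference is in how this identity is dispatched -- the paper substitutes $u=(A-\mutb)^{-1}g$ and cites the Green identity \eqref{Green} of Lemma \ref{lemma:2} (via Remark \ref{rem:gen}), while your weak-testing argument against vectors $(A-\mutb)\psi$, $\psi\in D(A)$, re-derives exactly that special case of \eqref{Green} from the Lagrange identity \eqref{eq:Lagrange}, which is precisely how Lemma \ref{lemma:2} is itself proved in the paper.
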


\begin{proof}
 Let $h\in D(A)$ and set
$$v=\left[ S(\mu)-S(\mut)- i(\mu-\mut) \left(\Gamma_* (A^*-\mu)^{-1}\right)\left(\Gamma(A-\mutb)^{-1}\right)^*\right]\Gamma h.$$
Then using \eqref{eq:S} we get
\bea
v&=&  \Gamma_* (A^*-\mu)^{-1}(A-\mu) h - \Gamma_* (A^*-\mut)^{-1}(A-\mut) h+i(\mut-\mu)\left(\Gamma_* (A^*-\mu)^{-1}\right)\left(\Gamma(A-\mutb)^{-1}\right)^*\Gamma h\\
&=& \Gamma_* (A^*-\mu)^{-1}
\left[ (A-\mu)h - (A-\mut) h - (\mut-\mu) (A^*-\mut)^{-1}(A-\mut) h
+i(\mut-\mu)\left(\Gamma(A-\mutb)^{-1}\right)^*\Gamma h\right]
\\
&=& (\mut-\mu)\Gamma_* (A^*-\mu)^{-1}
\left[  h -  (A^*-\mut)^{-1}(A-\mut) h
+i\left(\Gamma(A-\mutb)^{-1}\right)^*\Gamma h\right]
\eea
Next, set $h=(A^*-\mutb)^{-1}g$. After applying \eqref{Green} (with Remark \ref{rem:gen}), a short calculation shows that the term in the square brackets vanishes, giving $v=0$, as required.
\end{proof}

From this identity we see that although $S(\mu)$ need not be a contraction for all $\mu\in\rho(A^*)$, it remains bounded on $\Ran\Gamma$.

\begin{corollary}
For $\mu\in\rho(A^*)$, we have that $S(\mu)$ is a bounded operator on $\Ran\Gamma$. Moreover,
\be\label{eq:normestimate}
\norm{S(\mu)}\leq 1+16\sqrt{\gamma(1+\gamma)},
\ee 
where $\gamma=|\Im(\mu)|\norm{(A^*-\mu)^{-1}}$.
\end{corollary}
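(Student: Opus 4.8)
The plan is to leverage the difference identity \eqref{eq:Sdiff} together with Corollary \ref{corollary:2.2}, which already guarantees that $S(z)$ is a genuine contraction for $z\in\C^+$. The idea is to write $S(\mu)=S(z)+\bigl(S(\mu)-S(z)\bigr)$ for a conveniently chosen reference point $z\in\C^+$, so that $\norm{S(\mu)}\leq 1+\norm{S(\mu)-S(z)}$ on $\Ran\Gamma$, and then estimate the correction term using \eqref{eq:Sdiff}. First I would fix $\mu\in\rho(A^*)$ and choose the reference point $z=\mub$ (which lies in $\C^+$ when $\Im\mu<0$; the case $\Im\mu>0$ is symmetric, and if $\mu\in\C^+$ there is nothing to prove since $S(\mu)$ is already a contraction). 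With $\mut=z$ in \eqref{eq:Sdiff}, the prefactor becomes $i(\mu-\mut)$ with $|\mu-\mut|=2|\Im\mu|$, so I expect a bound of the form
\be
\norm{S(\mu)-S(z)}\leq 2|\Im\mu|\,\norm{\Gamma_*(A^*-\mu)^{-1}}\cdot\norm{\bigl(\Gamma(A-\mutb)^{-1}\bigr)^*}
\ee
on $\Ran\Gamma$.

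The key step is then to control the two factor norms $\norm{\Gamma_*(A^*-\mu)^{-1}}$ and $\norm{\Gamma(A-\mutb)^{-1}}$ in terms of the resolvent norm $\norm{(A^*-\mu)^{-1}}$ and hence the quantity $\gamma=|\Im\mu|\,\norm{(A^*-\mu)^{-1}}$. For this I would return to the Lagrange identities. Estimating $\norm{\Gamma f}^2$ for $f$ in the range of a resolvent via \eqref{eq:Lagrange}: taking $u=v=(A-w)^{-1}g$ gives $\norm{\Gamma u}^2=2\Im\langle Au,u\rangle$, and writing $Au=(A-w)u+wu$ lets me bound $\norm{\Gamma u}^2$ by a combination of $\norm{g}\,\norm{u}$ and $|\Im w|\norm{u}^2$ terms, which in turn are controlled by $\norm{(A-w)^{-1}}$. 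The analogous computation with \eqref{eq:LagrangeStar} handles the $\Gamma_*$ factor. Carrying this through should produce factors of order $\sqrt{\gamma}$ and $\sqrt{1+\gamma}$ from each resolvent block, so that the product of the two factor norms times the prefactor $2|\Im\mu|$ collapses to something of the shape $\mathrm{const}\cdot\sqrt{\gamma(1+\gamma)}$, matching the claimed constant $16$.

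The main obstacle will be the bookkeeping in these resolvent estimates: one must relate norms involving $\Gamma_*(A^*-\mu)^{-1}$ (with $\Gamma_*$ attached to the \emph{adjoint} resolvent) and $\Gamma(A-\mutb)^{-1}$ (with $\Gamma$ on $A$) to the \emph{single} scalar $\gamma$, keeping track of which half-plane each spectral parameter sits in and of the real-part contributions $\Re\langle u,(A-w)^{-1}\cdots\rangle$ that appear when $Au$ is split. I would watch carefully that the cross terms are dominated correctly and that the powers of $\gamma$ come out as $\gamma^{1/2}(1+\gamma)^{1/2}$ rather than a worse power; getting the numerical constant exactly $16$ is not essential to the argument, but the $\sqrt{\gamma(1+\gamma)}$ growth order is the real content and is what the splitting $S(\mu)=S(\mub)+(S(\mu)-S(\mub))$ together with \eqref{eq:Sdiff} is designed to deliver.
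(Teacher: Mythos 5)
Your proposal is correct, and it rests on the same two pillars as the paper's own proof: the difference identity \eqref{eq:Sdiff} with a reference point in $\C^+$, where Corollary \ref{corollary:2.2} gives contractivity, together with Lagrange-identity estimates for the factors $\norm{\Gamma_*(A^*-\mu)^{-1}}$ and $\norm{\Gamma(A-\cdot)^{-1}}$. The execution differs in a way worth recording, though. The paper keeps the reference point $\mut\in\C^+$ free, bounds $\norm{\Gamma(A-\mutb)^{-1}}\le 2/\sqrt{\Im\mut}$, bounds $\norm{\Gamma_*(A^*-\mu)^{-1}}$ by factoring through a second auxiliary point $\lambda=\Re(\mu)+i\tau$, and then minimises two expressions of the form $(a+bx)/\sqrt{x}$ over $x=\Im\mut$ and $x=\tau$; this two-parameter optimisation is what produces the constant $16$. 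Your fixed choice $z=\mub$ eliminates both free parameters: for $u=(A-\mu)^{-1}g$ with $\mu\in\C^-$, the splitting $Au=(A-\mu)u+\mu u$ in \eqref{eq:Lagrange} gives $\norm{\Gamma u}^2\le 2\norm{g}\norm{u}-2|\Im\mu|\norm{u}^2\le \norm{g}^2/(2|\Im\mu|)$, while the same splitting in \eqref{eq:LagrangeStar}, where the sign of $\Im\mu$ is now unfavourable, gives $\norm{\Gamma_*(A^*-\mu)^{-1}}^2\le 2R+2|\Im\mu|R^2=2R(1+\gamma)$ with $R=\norm{(A^*-\mu)^{-1}}$. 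Multiplying by the prefactor $|\mu-\mub|=2|\Im\mu|$ yields
\begin{equation*}
\norm{S(\mu)}\ \le\ 1+2|\Im\mu|\cdot\sqrt{2R(1+\gamma)}\cdot\frac{1}{\sqrt{2|\Im\mu|}}\ =\ 1+2\sqrt{\gamma(1+\gamma)},
\end{equation*}
so your route in fact proves the estimate with constant $2$ in place of $16$, and a fortiori gives \eqref{eq:normestimate}; what the paper's free-parameter version buys in exchange is only flexibility, not a better bound. One degenerate case needs a patch in your argument: for $\mu\in\R\cap\rho(A^*)$ the point $z=\mub=\mu$ does not lie in $\C^+$. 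There you should take $z=\mu+i\eps$ and let $\eps\to0^+$: the difference term is at most $\eps\cdot\sqrt{2R}\cdot(2\eps)^{-1/2}=\sqrt{\eps R}\to 0$, recovering $\norm{S(\mu)}\le 1$, which is exactly what \eqref{eq:normestimate} asserts when $\gamma=0$ (the paper's version covers this case automatically by letting $\Im\mut\to0$ in the optimisation).
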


\begin{proof}
Choose $\mut\in\C^+$. Then from \eqref{eq:Sdiff}, we get that
\bea
\norm{S(\mu)}&\leq& \norm{S(\mut)} + |\mu-\mut| \norm{\Gamma_* (A^*-\mu)^{-1}}\norm{\Gamma(A-\mutb)^{-1}}\\
&\leq &1 +|\mu-\mut| \norm{\Gamma_* (A^*-\mu)^{-1}}\norm{\Gamma(A-\mutb)^{-1}}.
\eea
Let $A'=A-\Re(\mut)$. Then, using the Lagrange identity \eqref{eq:Lagrange},  
\bea
\norm{\Gamma(A-\mutb)^{-1}}^2 &\leq& 2 \norm{(A-\mutb)^{-1}} \norm{A'(A-\mutb)^{-1}}\\
&\leq & \frac{2}{\Im(\mut)}\norm{I+i\Im(\mutb)(A-\mutb)^{-1}}\ \leq \ \frac{4}{\Im(\mut)}.
\eea
Next, let $\lambda=\Re(\mu)+i\tau$ form some $\tau>0$. Then using the previous estimate, we get
\bea
\norm{\Gamma_* (A^*-\mu)^{-1}} &\leq & \norm{\Gamma_* (A^*-\lambda)^{-1}\left[I+(\mu-\lambda)(A^*-\mu)^{-1}\right]}\\
&\leq & \norm{\Gamma_* (A^*-\lambda)^{-1}} \norm{I+i(\Im(\mu)-\tau)(A^*-\mu)^{-1}} \\
&\leq &  \sqrt{\frac{4}{\tau}}\left[1+(|\Im(\mu)|+\tau) \norm{(A^*-\mu)^{-1}}\right].
\eea
Combining the estimates, we get that
\beq
\norm{S(\mu)} \leq 1+\frac{4(|\Im(\mu)|+\Im(\mut))}{\sqrt{\Im{\mut}}} \cdot \frac{1+(|\Im(\mu)|+\tau) \norm{(A^*-\mu)^{-1}}}{\sqrt{\tau}}.
\eeq
Both fractions are of the form $(a+bx)/\sqrt{x}$ and are minimized for $x=a/b$ with value $2\sqrt{ab}$. Thus,
\beq
\norm{S(\mu)} \leq 1+16 \sqrt{\Im{\mu}} \cdot \sqrt{\left(1+|\Im(\mu)| \norm{(A^*-\mu)^{-1}}\right)\norm{(A^*-\mu)^{-1}} },
\eeq
as claimed.
\end{proof}

This justifies the following definition.
\begin{definition}
The operator-valued function $S(\cdot)$, defined for $z\in\rho(A^*)$  by \eqref{eq:S} on $\Ran(\Gamma)$ and extended to $E$ by continuity is called the \v{S}traus characteristic function of the operator $A$.
\end{definition}

\begin{lemma}\label{lemma:adj}
$S(z)=S_*^*(\zb)$ for $z	\in \rho(A^*)$.
\end{lemma}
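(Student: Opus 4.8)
**Proof proposal for Lemma (that $S(z) = S_*^*(\bar z)$ for $z \in \rho(A^*)$):**

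The plan is to verify the adjoint relation by testing against arbitrary range vectors $\Gamma u$ and $\Gamma_* w$, reducing everything to the two Lagrange identities. We want to show $\langle S(z)\Gamma u, \Gamma_* w\rangle_{E_*} = \langle \Gamma u, S_*(\bar z)\Gamma_* w\rangle_E$ for all $u \in D(A)$ and $w \in D(A^*)$; since $\Ran\Gamma$ and $\Ran\Gamma_*$ are dense, this determines $S(z)$ on $E$ and $S_*^*(\bar z)$ as operators $E \to E_*$, and will give the claim after the usual density/continuity argument already used to define both functions. Note $z \in \rho(A^*)$ forces $\bar z \in \rho(A)$, so $S_*(\bar z)$ is defined by \eqref{Sstar} (after the extension established above), and both sides make sense.

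First I would substitute the defining formulae. Using \eqref{eq:S}, the left-hand side becomes $\langle \Gamma_*(A^*-z)^{-1}(A-z)u, \Gamma_* w\rangle_{E_*}$. Using \eqref{Sstar} with $z$ replaced by $\bar z$, the right-hand side becomes $\langle \Gamma u, \Gamma(A-\bar z)^{-1}(A^*-\bar z)w\rangle_E$. So the target identity is
\[
\langle \Gamma_*(A^*-z)^{-1}(A-z)u, \Gamma_* w\rangle_{E_*} = \langle \Gamma u, \Gamma(A-\bar z)^{-1}(A^*-\bar z)w\rangle_E.
\]
Each inner product of $\Gamma$'s or $\Gamma_*$'s can now be converted into a difference of $H$-inner products via \eqref{eq:Lagrange} and \eqref{eq:LagrangeStar}. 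The cleanest route is to expand the right-hand side: set $w' = (A-\bar z)^{-1}(A^*-\bar z)w \in D(A)$, so that $\langle \Gamma u, \Gamma w'\rangle_E = \frac{1}{i}(\langle Au, w'\rangle - \langle u, Aw'\rangle)$ by \eqref{eq:Lagrange}. Similarly expand the left-hand side by setting $u' = (A^*-z)^{-1}(A-z)u \in D(A^*)$ and applying \eqref{eq:LagrangeStar}, giving $\langle \Gamma_* u', \Gamma_* w\rangle_{E_*} = -\frac{1}{-i}(\langle A^*u', w\rangle - \langle u', A^*w\rangle) = \frac{1}{i}(\langle A^*u', w\rangle - \langle u', A^*w\rangle)$.

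The main work, and the only real obstacle, is then a bookkeeping calculation in $H$: one inserts $A^*u' = z u' + (A-z)u$ (from the definition of $u'$) and $Aw' = \bar z\, w' + (A^*-\bar z)w$ (from the definition of $w'$), and checks that the two expanded expressions coincide. Both collapse to the same symmetric combination of the pairings $\langle (A-z)u, w\rangle$, $\langle u', (A^*-\bar z)w\rangle$, and cross terms in $z, \bar z$; the algebra is essentially the same resolvent shuffle already carried out in the proofs of Lemma \ref{lemma:2.1} and Lemma \ref{lemma:2}, so I expect no new conceptual difficulty, only care with the $z$ versus $\bar z$ placement and with which operator ($A$ or $A^*$) acts on which factor. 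I would organize it so that both sides are reduced to $\langle (A-z)u, w\rangle - \langle u, (A^*-\bar z)w\rangle$ plus matching correction terms, which cancel by inspection. An alternative, possibly slicker, route is to recognize that the identity \eqref{eq:Sdiff} (and its $S_*$-analogue) already encodes the adjoint symmetry for differences, so one only needs to check $S(z) = S_*^*(\bar z)$ at a single base point in the upper half-plane — where both are genuine contractions defined by Corollary \ref{corollary:2.2} and Lemma \ref{lemma:2.1b} — and then propagate by the difference formula; but the direct Lagrange-identity computation is self-contained and I would present that.
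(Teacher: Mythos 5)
Your strategy is the same as the paper's: the paper also proves $\llangle S(z)e,e_*\rrangle_{E_*}=\llangle e,S_*(\zb)e_*\rrangle_E$ by testing on the dense sets $\Ran\Gamma$ and $\Ran\Gamma_*$ (parametrised there as $e=\Gamma(A-\zb)^{-1}f$, $e_*=\Gamma_*(A^*-z)^{-1}g$ with $f,g\in H$, which are exactly your test vectors, since the resolvents map $H$ onto $D(A)$ and $D(A^*)$), converts both sides into $H$-inner products via the two Lagrange identities, and finishes with resolvent algebra. So the outline is sound and self-contained.

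There is, however, a sign error in your setup that would derail the bookkeeping you defer. From \eqref{eq:LagrangeStar} one gets
\[
\llangle \Gamma_* u',\Gamma_* w\rrangle_{E_*}=\tfrac{1}{-i}\bigl(\llangle A^*u',w\rrangle-\llangle u',A^*w\rrangle\bigr)=i\bigl(\llangle A^*u',w\rrangle-\llangle u',A^*w\rrangle\bigr),
\]
whereas you wrote $-\tfrac{1}{-i}=\tfrac{1}{i}$: the leading minus sign is spurious. Since \eqref{eq:Lagrange} gives $\llangle \Gamma u,\Gamma w'\rrangle_E=\tfrac{1}{i}\bigl(\llangle Au,w'\rrangle-\llangle u,Aw'\rrangle\bigr)$, the two sides carry \emph{opposite} prefactors ($+i$ and $-i$), and the identity to be verified in $H$ is
\[
\llangle A^*u',w\rrangle-\llangle u',A^*w\rrangle=-\bigl(\llangle Au,w'\rrangle-\llangle u,Aw'\rrangle\bigr),
\]
with a relative minus sign, not the same-sign version your prefactors produce. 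The distinction is not cosmetic. Substituting $A^*u'=zu'+(A-z)u$ and $Aw'=\zb\, w'+(A^*-\zb)w$, and using $\bigl((A^*-z)^{-1}\bigr)^*=(A-\zb)^{-1}$, the left-hand side collapses to $\llangle (A-z)u,w\rrangle-\llangle (A-z)u,w'\rrangle$ and the bracket on the right to $\llangle (A-z)u,w'\rrangle-\llangle u,(A^*-\zb)w\rrangle$; the \emph{sum} of these two expressions equals $\llangle Au,w\rrangle-\llangle u,A^*w\rrangle=0$ by the definition of the adjoint, so they are negatives of one another. Hence the minus-sign version is exactly what is true, while the same-sign version forced by your prefactors would make both expressions vanish identically, i.e.\ $S(z)=0$ on $\Ran\Gamma$, which is absurd. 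Correct the prefactor and your computation closes precisely as you envisage, essentially reproducing the paper's proof.
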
 

\begin{proof}
We need to show that for all $e\in E$ and $e_*\in E_*$, we have
$$\llangle S(z) e, e_*\rrangle_{E_*} =\llangle e, S_*(\zb)e_*\rrangle_E.$$
Due to the boundedness of $S(z)$ and $S_*(\zb)$, it is sufficient to show this on dense sets. Therefore, we choose $e=\Gamma(A-\zb)^{-1}f$ and $e_*=\Gamma_*(A^*-z)^{-1}g$ for $f,g\in H$.
Then
\bea
 i \llangle S(z) e, e_*\rrangle_{E_*}&=& 
 i \llangle \Gamma_*(A^*-z)^{-1}(A-z)(A-\zb)^{-1}f, \Gamma_*(A^*-z)^{-1}g\rrangle_{E_*}
\eea
and using the Lagrange identity \eqref{eq:LagrangeStar} this equals
\bea  & -\llangle (A^*-\zb)(A^*-z)^{-1}(A-z)(A-\zb)^{-1}f, (A^*-z)^{-1}g \rrangle + \llangle(A^*-z)^{-1}(A-z)(A-\zb)^{-1}f, g \rrangle
\\ &=
\llangle \left(I-(A-\zb)^{-1}(A^*-\zb)\right)(A^*-z)^{-1}(A-z)(A-\zb)^{-1}f, g \rrangle\\
&=\llangle \left((A^*-z)^{-1}- (A-\zb)^{-1}\left(I+(z-\zb)(A^*-z)^{-1}\right)\right)(A-z)(A-\zb)^{-1}f, g \rrangle\\
&=\llangle \left((A^*-z)^{-1}- (A-\zb)^{-1} -(z-\zb)(A-\zb)^{-1} (A^*-z)^{-1}\right)(A-z)(A-\zb)^{-1}f, g \rrangle
.\eea
On the other hand, using the first Lagrange identity \eqref{eq:Lagrange} we have 
\bea
i\llangle e, S_*(\zb)e_*\rrangle_E &=&
i\llangle \Gamma(A-\zb)^{-1}f, \Gamma (A-\zb)^{-1}(A^*-\zb)(A^*-z)^{-1}g\rrangle\\ 
&=& \llangle (A-z)(A-\zb)^{-1}f, (A-\zb)^{-1}(A^*-\zb)(A^*-z)^{-1}g \rrangle -\llangle (A-\zb)^{-1}f, (A^*-\zb)(A^*-z)^{-1}g\rrangle\\
&=& \llangle (A-z)(A-\zb)^{-1}f, ((A-\zb)^{-1}(A^*-\zb)-I)(A^*-z)^{-1}g \rrangle \\
&=& \llangle (A-z)(A-\zb)^{-1}f, \left((A-\zb)^{-1}(I+(z-\zb)(A^*-z)^{-1})-(A^*-z)^{-1}\right)g \rrangle\\
&=& \llangle (A-z)(A-\zb)^{-1}f, \left((A-\zb)^{-1}-(A^*-z)^{-1}+(z-\zb)(A-\zb)^{-1}(A^*-z)^{-1}\right)g \rrangle\\
&=& \llangle \left((A^*-z)^{-1}- (A-\zb)^{-1} -(z-\zb)(A-\zb)^{-1} (A^*-z)^{-1}\right)(A-z)(A-\zb)^{-1}f, g \rrangle
,
\eea
proving the desired equality.
\end{proof}

\begin{lemma}\label{lem:inverse}
$S(z)S_*(z)=I_{E_*}$ and $S_*(z)S(z)=I_E$ whenever  $z	\in \rho(A)\cap\rho(A^*)$.
\end{lemma}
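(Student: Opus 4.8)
The plan is to verify both identities first on the dense ranges $\Ran\Gamma$ and $\Ran\Gamma_*$ and then extend by boundedness, since on $z\in\rho(A)\cap\rho(A^*)$ both $S(z)$ and $S_*(z)$ are bounded by the preceding results. The essential mechanism is simply that the defining relations \eqref{eq:S} and \eqref{Sstar} can be chained, provided each intermediate vector lands in the correct operator domain.

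First I would prove $S_*(z)S(z)=I_E$. Fix $u\in D(A)$. By \eqref{eq:S} we have $S(z)\Gamma u=\Gamma_*(A^*-z)^{-1}(A-z)u$. Set $w:=(A^*-z)^{-1}(A-z)u$; since $z\in\rho(A^*)$, the resolvent $(A^*-z)^{-1}$ maps $H$ into $D(A^*)$, so $w\in D(A^*)$ and $S(z)\Gamma u=\Gamma_* w$. Applying \eqref{Sstar}, which is available because $z\in\rho(A)$, gives
\[ S_*(z)S(z)\Gamma u = S_*(z)\Gamma_* w = \Gamma(A-z)^{-1}(A^*-z)w = \Gamma(A-z)^{-1}(A-z)u = \Gamma u, \]
where the penultimate step uses $(A^*-z)w=(A-z)u$ and the final step uses the cancellation $(A-z)^{-1}(A-z)u=u$, valid for $u\in D(A)$ and $z\in\rho(A)$. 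Since $\Ran\Gamma$ is dense in $E$ and $S_*(z)S(z)$ is bounded, this yields $S_*(z)S(z)=I_E$.

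The identity $S(z)S_*(z)=I_{E_*}$ follows by the symmetric computation with the roles of $A$ and $A^*$ interchanged: for $u\in D(A^*)$ set $w:=(A-z)^{-1}(A^*-z)u$, which lies in $D(A)$ because $(A-z)^{-1}$ maps $H$ into $D(A)$, so that $S_*(z)\Gamma_* u=\Gamma w$ by \eqref{Sstar}. Then \eqref{eq:S} gives $S(z)S_*(z)\Gamma_* u=\Gamma_*(A^*-z)^{-1}(A-z)w=\Gamma_*(A^*-z)^{-1}(A^*-z)u=\Gamma_* u$, using $(A-z)w=(A^*-z)u$ and the cancellation $(A^*-z)^{-1}(A^*-z)u=u$. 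Density of $\Ran\Gamma_*$ in $E_*$ together with boundedness of $S(z)S_*(z)$ then completes the argument.

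The only point requiring care—and hence the main, rather minor, obstacle—is the domain bookkeeping: at each stage one must check that the vector to which a defining relation is applied genuinely lies in $D(A)$ or in $D(A^*)$. This is precisely where the hypothesis $z\in\rho(A)\cap\rho(A^*)$ enters, since it guarantees that both resolvents are bounded and map $H$ onto the respective domains, making all the compositions and the two resolvent cancellations legitimate. No estimate or deeper identity beyond the defining relations is needed.
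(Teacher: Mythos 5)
Your proof is correct and is essentially the paper's own argument: verify the identities on the dense ranges $\Ran\Gamma_*$ and $\Ran\Gamma$ by chaining the defining relations \eqref{eq:S} and \eqref{Sstar}, using the resolvent cancellation, and then extend by boundedness. The only cosmetic difference is that the paper writes out the computation for $S(z)S_*(z)\Gamma_*u=\Gamma_*u$ and declares the other identity ``similar,'' whereas you carry out both explicitly.
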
 

\begin{proof}
Due to the boundedness of the operators involved, it is again sufficient to show this on a dense set. Let $u\in D(A^*)$. Then
\bea
S(z)S_*(z)\Gamma_*u &=& S(z)\Gamma (A-z)^{-1}(A^*-z)u \\
&=& \Gamma_*(A^*-z)^{-1}(A-z)(A-z)^{-1}(A^*-z)u \ =\ \Gamma_* u.
\eea
The second equality can be proved similarly.
\end{proof} 
 
This immediately gives the following results.
\begin{corollary}
\begin{enumerate}
	\item $S(z)$ is unitary for $z\in\R\cap\rho(A)$.
\item If $\sigma(A)$ does not cover the whole upper half plane (or if $\rho(A)\cap\rho(A^*)\not=\emptyset$), then $\dim E=\dim E_*$.
\end{enumerate}
\end{corollary}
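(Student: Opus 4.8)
The plan is to deduce both statements directly from Lemmas~\ref{lem:inverse} and \ref{lemma:adj}, since the essential analytic content is already contained in those two identities; the only preliminary work is to locate, under each hypothesis, points lying simultaneously in $\rho(A)$ and $\rho(A^*)$ where the lemmas apply. First I would record the elementary spectral symmetry $\rho(A^*)=\{\zb:z\in\rho(A)\}$ for the closed densely defined operator $A$, and combine it with the half-plane facts from Section~2: an MDO has $\C^-\subseteq\rho(A)$, and hence its anti-dissipative adjoint satisfies $\C^+\subseteq\rho(A^*)$. These inclusions supply exactly the points I need.

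For part (1), I would fix $z\in\R\cap\rho(A)$. Because $z$ is real we have $\zb=z$, so the spectral symmetry gives $z\in\rho(A^*)$ as well, placing $z\in\rho(A)\cap\rho(A^*)$. Lemma~\ref{lem:inverse} then yields $S(z)S_*(z)=I_{E_*}$ and $S_*(z)S(z)=I_E$, i.e.\ $S_*(z)=S(z)^{-1}$. Lemma~\ref{lemma:adj} with $\zb=z$ gives $S(z)=S_*^*(z)$, and taking adjoints $S(z)^*=S_*(z)$. Combining the two, $S(z)^*=S(z)^{-1}$, so $S(z)^*S(z)=I_E$ and $S(z)S(z)^*=I_{E_*}$, which is precisely the assertion that $S(z):E\to E_*$ is unitary.

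For part (2), I would first observe that the two stated hypotheses collapse to one. If $\sigma(A)$ does not cover the whole upper half-plane, then some $z_0\in\C^+$ lies in $\rho(A)$; since $\C^+\subseteq\rho(A^*)$ holds automatically, we get $z_0\in\rho(A)\cap\rho(A^*)$, so $\rho(A)\cap\rho(A^*)\neq\emptyset$ in either case. Now for any $z\in\rho(A)\cap\rho(A^*)$, Lemma~\ref{lem:inverse} exhibits the bounded operators $S(z):E\to E_*$ and $S_*(z):E_*\to E$ as mutual inverses. Hence $S(z)$ is a bounded linear bijection with bounded inverse between the Hilbert spaces $E$ and $E_*$, i.e.\ a linear homeomorphism, which forces $\dim E=\dim E_*$.

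I do not expect a genuine obstacle here: both parts are two-line manipulations of identities already established, which is why the author writes that the corollary follows ``immediately.'' The only point demanding a little care is the spectral bookkeeping that places the relevant $z$ in $\rho(A)\cap\rho(A^*)$ — real points of $\rho(A)$ for part (1), and an upper half-plane point of $\rho(A)$ for part (2) — and this rests entirely on the symmetry $\rho(A^*)=\{\zb:z\in\rho(A)\}$ together with the fact that an MDO and its anti-dissipative adjoint have their spectra confined to $\overline{\C^+}$ and $\overline{\C^-}$ respectively.
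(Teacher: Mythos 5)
Your proof is correct and follows essentially the same route as the paper: both parts are deduced from Lemma~\ref{lemma:adj} and Lemma~\ref{lem:inverse}, with your spectral bookkeeping ($\rho(A^*)=\{\zb : z\in\rho(A)\}$ placing real points of $\rho(A)$, resp.\ upper half-plane points of $\rho(A)$, into $\rho(A)\cap\rho(A^*)$) merely filling in what the paper leaves implicit. The only cosmetic difference is in part (2), where the paper concludes via the rank inequalities $\dim E_*\leq\rank S(z)\leq\min\{\dim E,\dim E_*\}$ and $\dim E\leq\rank S_*(z)\leq\min\{\dim E,\dim E_*\}$, while you invoke the equivalent fact that mutually inverse bounded bijections between Hilbert spaces force equal Hilbert dimensions.
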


\begin{proof}
\begin{enumerate}
	\item This is immediate from Lemmas \ref{lemma:adj} and \ref{lem:inverse}.
	\item Let  $z\in\rho(A)\cap\rho(A^*)$. From the first equality in Lemma \ref{lem:inverse}, we get that 
	$$\dim E_*\leq \rank S(z)\leq\min\{\dim E,\dim E_*\},$$
	while the second equality gives
	$$\dim E\leq \rank S_*(z)\leq\min\{\dim E,\dim E_*\},$$
	proving the result.
\end{enumerate}
\end{proof}

The next lemma shows Hermitian positivity properties (see Azizov-Iokhvidov \cite{AI89} for related results).

\begin{lemma}\label{lem:SStarS}
For  $w,z	\in \C^+$, we have
$$\frac{1}{\bar{w}-z}\left(I_E-S^*(w)S(z)\right)=i \left(\Gamma(A-\bar{w})^{-1}\right) \left(\Gamma(A-\bar{z})^{-1}\right)^*$$
and for $w,z	\in \C^-$, we have
$$\frac{1}{\bar{w}-z}\left(I_{E_*}-S_*^*(w)S_*(z)\right)= - i \left(\Gamma_*(A^*-\bar{w})^{-1}\right) \left(\Gamma_*(A^*-\bar{z})^{-1}\right)^*.$$
\end{lemma}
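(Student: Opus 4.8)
The plan is to verify both identities on the dense subspaces $\Ran\Gamma\subseteq E$ and $\Ran\Gamma_*\subseteq E_*$, which is legitimate since every operator appearing is bounded. I would treat the first identity in detail; the second then follows by the symmetric argument (interchanging $A\leftrightarrow A^*$, $\Gamma\leftrightarrow\Gamma_*$, $S\leftrightarrow S_*$ and $\C^+\leftrightarrow\C^-$), exactly as Lemma~\ref{lemma:2.1b} mirrors Lemma~\ref{lemma:2.1}. Fixing $u,v\in D(A)$ and $w,z\in\C^+$, and using that $S^*(w)$ is the Hilbert-space adjoint of $S(w)$ so that $\langle S^*(w)S(z)\Gamma u,\Gamma v\rangle_E=\langle S(z)\Gamma u,S(w)\Gamma v\rangle_{E_*}$, it suffices to prove
\[
\frac{1}{\bar w-z}\Big(\langle\Gamma u,\Gamma v\rangle_E-\langle S(z)\Gamma u,S(w)\Gamma v\rangle_{E_*}\Big)
= i\,\big\langle \Gamma(A-\bar w)^{-1}\,(\Gamma(A-\bar z)^{-1})^*\,\Gamma u,\ \Gamma v\big\rangle_E.
\]

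First I would identify the right-hand side as an inner product in $H$. Writing $B_\zeta:=\Gamma(A-\bar\zeta)^{-1}$, the right-hand side equals $i\langle B_z^*\Gamma u,\ B_w^*\Gamma v\rangle_H$, so the task reduces to computing $B_z^*\Gamma u$. For $h\in H$ one has $\langle B_z^*\Gamma u,h\rangle_H=\langle\Gamma u,\Gamma(A-\bar z)^{-1}h\rangle_E$; applying the Lagrange identity \eqref{eq:Lagrange} with second argument $(A-\bar z)^{-1}h$, and using $((A-\bar z)^{-1})^*=(A^*-z)^{-1}$ together with $A(A-\bar z)^{-1}=I+\bar z(A-\bar z)^{-1}$, a short manipulation yields $B_z^*\Gamma u=-i\big[(A^*-z)^{-1}(A-z)u-u\big]$. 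Hence, setting $p:=(A^*-z)^{-1}(A-z)u$ and $q:=(A^*-w)^{-1}(A-w)v$, the right-hand side collapses to $i\langle p-u,\ q-v\rangle_H$ (since $(-i)\overline{(-i)}=1$).

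It then remains to compute the left-hand side and match. By the defining relation \eqref{eq:S}, $S(z)\Gamma u=\Gamma_* p$ and $S(w)\Gamma v=\Gamma_* q$, so I would evaluate $\langle\Gamma_* p,\Gamma_* q\rangle$ via the second Lagrange identity \eqref{eq:LagrangeStar} and $\langle\Gamma u,\Gamma v\rangle$ via \eqref{eq:Lagrange}. Using $A^*p=(A-z)u+zp$ and $A^*q=(A-w)v+wq$ (immediate from the definitions of $p,q$), and, crucially, the adjoint relations $\langle Au,q\rangle=\langle u,A^*q\rangle$ and $\langle p,Av\rangle=\langle A^*p,v\rangle$, valid precisely because $u,v\in D(A)$ and $p,q\in D(A^*)$, one finds that all terms containing $A$ and $A^*$ cancel and the left-hand side reduces to exactly $i(\bar w-z)\langle p-u,\ q-v\rangle_H$. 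Dividing by $\bar w-z$ matches the right-hand side. This computation is the two-point (sesquilinear) generalisation of Lemma~\ref{lemma:2.1}; indeed, setting $w=z$ and $u=v$ recovers that lemma, and the Hermitian symmetry $z\leftrightarrow w$ of the claimed formula is consistent with taking adjoints.

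I expect the main obstacle to be the second step, namely correctly establishing $B_z^*\Gamma u=-i[(A^*-z)^{-1}(A-z)u-u]$: this is what bridges the two superficially different sides of the identity, a kernel operator on $E$ on one side versus the $H$-valued quantity of Lemma~\ref{lemma:2.1} on the other, and it demands careful bookkeeping of the adjoint $((A-\bar z)^{-1})^*=(A^*-z)^{-1}$ and of the complex-conjugate scalar factors. Once this identification is in place, the remaining algebra is routine, the only genuine subtlety being the systematic use of $\langle Au,q\rangle=\langle u,A^*q\rangle$ to eliminate the cross terms, exactly as in the proof of Lemma~\ref{lemma:2.1}.
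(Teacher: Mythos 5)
Your proposal is correct, but it takes a genuinely different route from the paper's. The paper proves the first identity at the operator level, applied to the dense set of vectors $\Gamma(A-\bar z)^{-1}f$, $f\in H$: it first replaces $S^*(w)$ by $S_*(\bar w)$ via Lemma \ref{lemma:adj}, so that the composition $S^*(w)S(z)$ acts explicitly through the defining relations \eqref{eq:S} and \eqref{Sstar}, then performs resolvent algebra to factor out $(z-\bar w)\Gamma(A-\bar w)^{-1}$, and finishes by invoking the abstract Green identity \eqref{Green} (with $\lambda=-\bar z$, $\mu=-z$). You instead work with matrix elements $\llangle\,\cdot\,\Gamma u,\Gamma v\rrangle$, which lets you dispose of $S^*(w)$ simply by moving it to the other side of the inner product — so you never need Lemma \ref{lemma:adj}, nor the operator form of the Green identity: your key identification $\left(\Gamma(A-\bar z)^{-1}\right)^*\Gamma u=-i\left[(A^*-z)^{-1}(A-z)u-u\right]$, derived directly from \eqref{eq:Lagrange}, is precisely the weak form of \eqref{Green}, and I have checked that your cancellation of the $A$- and $A^*$-terms indeed leaves $i(\bar w-z)\llangle p-u,q-v\rrangle_H$, as claimed. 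In effect your argument is the two-point, two-vector polarization of Lemma \ref{lemma:2.1}, and consistently recovers it when $w=z$, $u=v$. What each approach buys: yours is more self-contained and elementary, needing only the two Lagrange identities and the defining relation \eqref{eq:S}, and it makes transparent why the lemma is the sesquilinear extension of the contractivity computation; the paper's proof is shorter within its own architecture because it reuses the already-established Lemmas \ref{lemma:adj} and \ref{lemma:2}, and it stays at the operator level throughout, which is the form in which the identity is subsequently applied (e.g.\ in the proof of Lemma \ref{lem:5.4}). Both treatments correctly dispatch the second identity by the mirror-image argument for the anti-dissipative operator $A^*$.
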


\begin{proof}
We check the first equality on a dense set. Let $u=\Gamma(A-\bar{z})^{-1}f$ for some $f\in H$. Then using that $S^*(w)=S_*(\bar{w})$ from Lemma \ref{lemma:adj}, we get
\bea
\left(I_E-S^*(w)S(z)\right) u 
&= & \left[\Gamma(A-\bar{z})^{-1} - S^*(w)\Gamma_*(A^*-z)^{-1}(A-z)(A-\bar{z})^{-1}\right]f\\
&=& \left[\Gamma(A-\bar{z})^{-1} - \Gamma (A-\bar{w})^{-1}(A^*-\bar{w})(A^*-z)^{-1}(A-z)(A-\bar{z})^{-1}\right]f\\
&=& \Gamma (A-\bar{w})^{-1}\left[ I + (\bar{z}-\bar{w})(A-\bar{z})^{-1} - (I +(z-\bar{w})(A^*-z)^{-1})  (I+(\bar{z}-z)(A-\bar{z})^{-1})\right]f\\
&=& (z-\bar{w})\Gamma (A-\bar{w})^{-1}\left[  (A-\bar{z})^{-1} - (A^*-z)^{-1} -(A^*-z)^{-1}  (\bar{z}-z)(A-\bar{z})^{-1}\right]f.
\eea
Using \eqref{Green} with $\lambda=-\zb$ and $\mu=-z$, we get 
\bea
\left(I_E-S^*(w)S(z)\right) u &=& (z-\bar{w})\Gamma (A-\bar{w})^{-1}(-i)(\Gamma(A-\zb)^{-1})^*(\Gamma(A-\zb)^{-1}) f\\
&=& i (\bar{w}-z) \left(\Gamma(A-\bar{w})^{-1}\right) \left(\Gamma(A-\bar{z})^{-1}\right)^* u,
\eea
proving the identity.
\end{proof}

\begin{remark}
In the case when $z=w$, the rank of the limit operator of $I_E-S^*(z)S(z)$ as $z$ tends to the real axis corresponds to the local multiplicity of the a.c.-spectrum of $A$ {(see \cite{Pav75,Tik95}).}
\end{remark}

For later calculations, we will also need the following identities:

\begin{lem}
For any $u\in H$, $\mu,z\in\C^-$ we have
\be\label{eq:Sgammastar}
\left(\Gamma_*(A^*-\bar{\mu})^{-1}\right)^*S(\bar{z}) = \left[I-(\bar{z}-\mu)(A-\mu)^{-1}\right]\left(\Gamma(A-z)^{-1}\right)^*
\ee
and 
\be\label{eq:Stildegammastar}
\left(\Gamma(A-\mu)^{-1}\right)^*S_*(z) = \left[I-(z-\bar{\mu})(A^*-\bar{\mu})^{-1}\right]\left(\Gamma_*(A^*-\bar{z})^{-1}\right)^*.
\ee
\end{lem}

\begin{proof}
From the definition of $S_*$ in \eqref{Sstar} we have that
$$S_*(z)\Gamma_* (A^*-\bar{\mu})^{-1} = \Gamma (A-z)^{-1}(A^*-z) (A^*-\bar{\mu})^{-1}.$$
Taking adjoints, using Lemma \ref{lemma:adj}, we get
\bea\left(\Gamma_* (A^*-\bar{\mu})^{-1} \right)^* S(\bar{z})&=& \left[(A^*-z) (A^*-\bar{\mu})^{-1}\right]^* \left(\Gamma (A-z)^{-1}\right)^*\\
&=&  \left[I -(z-\bar{\mu})(A^*-\bar{\mu})^{-1}\right]^* \left(\Gamma (A-z)^{-1}\right)^* \\
&=&  \left[I-(\bar{z}-\mu)(A-\mu)^{-1}\right]\left(\Gamma(A-z)^{-1}\right)^*.
\eea
This proves \eqref{eq:Sgammastar}. The proof of \eqref{eq:Stildegammastar} is similar.
\end{proof}

\begin{example}\label{ex:bcq}
We consider a Schr\"odinger operator on the half-line with a dissipative boundary condition and potential. Let 
$H=L^2(\R^+)$ and \be(Af)(x)=-f''(x)+q(x)f(x),\label{eq:Schr}\ee
	where $q$ is a measurable and bounded function on $\R^+$ with $\Im q\geq 0$ and the domain of the operator is given by
	\be\label{eq:Schrdom}D(A):=\{y\in H^2(\R^+): y'(0)=hy(0)
	\},\ee
	where $\Im h >0$. 
		Then for $u,v\in D(A)$, equation \eqref{eq:Lagrange} holds with 
	$$\Gamma u=\twovec{\sqrt{2\Im h}\ u(0)}{\sqrt{2\Im q}\ u},$$
which has dense range in $\C\oplus L^2(\{x\in \R^+: \Im q(x)>0\})$. 
Note that $A^*$ is given by  
	$$(A^*f)(x)=-f''(x)+\overline{q(x)}f(x) \quad \hbox{ with domain }\quad D(A^*):=\{y\in H^2(\R^+): y'(0)=\bar{h}y(0)\}.$$
	 $\Gamma_*$ is given by the same expression as $\Gamma$.
	
	We now calculate the characteristic function.
For $u\in D(A)$ consider $S(z)\Gamma u= \Gamma_*(A^*-z)^{-1}(A-z)u$ with $z\in\rho(A^*)$.
		We choose to $\varphi_*$ and $\psi_*$ to be the fundamental solutions of $-y''+\bar{q}y=\lambda y$ with $\varphi_*$ and $\psi_*$ normalized by
			\be\label{eq:norm} \twovec{\varphi_*(0)}{\varphi_*'(0)}=\twovec{0}{1}\quad\hbox{ and }\quad \twovec{\psi_*(0)}{\psi_*'(0)}=\twovec{1}{0}.\ee
		 Moreover, let $m_*$ denote the Weyl-Titchmarsh function associated with $-y''+\bar{q}y$, i.e.~$m_*(z)\varphi_*+\psi_*$ is the $L^2$-solution to $-y''+\bar{q}y=zy$. This solution is unique up to constants due to our assumptions on $q$.
 Choosing 
\be\label{eq:cstar} c_*(z)=\frac{ h-\bar{h} }{\bar{h}-m_*(z)}u(0),\ee
 we have that
$v=u+c_*(z)(m_*(z)\varphi_*+\psi_*)+2i(A^*-z)^{-1} (\Im q)\ u$ lies in $D(A^*)$ and
$$(A^*-z) v = -u''+(\bar{q}-z)u +2i (\Im q)\ u = (A-z)u.$$
Hence, $v=(A^*-z)^{-1}(A-z)u$ and for $z\in\rho(A^*)$ we have $S(z)\Gamma u= \Gamma_*v$, i.e.~
$$S(z)\twovec{\sqrt{2\Im h}\ u(0)}{\sqrt{2\Im q}\ u} = \twovec{\sqrt{2\Im h}\ v(0)}{\sqrt{2\Im q}\ v}=\twovec{\sqrt{2\Im h}\ \left( u(0)+c_*(z)+\left(2i(A^*-z)^{-1} (\Im q)\ u\right)(0)\right)}{\sqrt{2\Im q}\ \left(u+c_*(z)(m_*(z)\varphi_*+\psi_*)+2i(A^*-z)^{-1} (\Im q)\ u\right)}.$$
This implies that
\ben
S(z)&=&\twomat{I+\frac{h-\bar{h}}{\bar{h}-m_*(z)}}{i\sqrt{2\Im h}\ (A^*-z)^{-1} \sqrt{2\Im q}\ \cdot (0)}
{i\frac{\sqrt{2\Im h}\ \sqrt{2\Im q}\ }{\bar{h}-m_*(z)}(m_*(z)\varphi_*(x)+\psi_*(x))}
{I+i \sqrt{2\Im q}\ (A^*-z)^{-1} \sqrt{2\Im q}}\nonumber \\
&=& \twomat{\frac{h-m_*(z)}{\bar{h}-m_*(z)}}{i\sqrt{2\Im h}\ \int_0^\infty \frac{m_*(z)\varphi_*(y)+\psi_*(y)}{\bar{h}-m_*(z)}\sqrt{2\Im q(y)}\ \cdot (y) \ dy}
{i\frac{\sqrt{2\Im h}\ \sqrt{2\Im q}\ }{\bar{h}-m_*(z)}(m_*(z)\varphi_*(x)+\psi_*(x))}
{I+i \sqrt{2\Im q}\ (A^*-z)^{-1} \sqrt{2\Im q}}. \label{SchrCharFn}
\een	
Note that the top left entry is the same as  the well-known formula of Pavlov
 for the case of real $q$, where the functions $m$ and $m_*$ coincide:
\be\label{Pavlov}S(z) =\left(1+\frac{h-\bar{h}}{\bar{h}-m(z)}\right).\ee 
Pavlov deduced it from the scattering theory interpretation of the characteristic function {\cite{Pav74,Pav76}. }
The bottom right entry agrees with the Liv\v{s}ic characteristic function for the case with a selfadjoint boundary condition \cite{Liv73}. Moreover, this formula shows the connection between the Weyl $m$-function and the characteristic function for this example.
\end{example}

We conclude this section by showing that the \v{S}traus characteristic function as defined here coincides up to an isometric transformation with the Sz-Nagy-Foias characteristic function for a contraction $T$, see \cite{SFBK10}, given by
\be\label{SzN}
\Theta(\lambda)=\left[-T +\lambda D_{T^*}(I-\lambda T^*)^{-1}D_T \right]\vert_{\Ran{\overline{D_T}}}, \quad\hbox{for } |\lambda|<1,
\ee
where $D_T=\sqrt{I-T^*T}$ and $D_{T^*}=\sqrt{I-TT^*}$. Then $\Theta(\lambda)\in B(\overline{\Ran(D_T)},\overline{\Ran(D_{T^*})})$. 

\begin{proposition} Let $z\in\C^+$ and $A$ be a maximal dissipative operator with Cayley transform $T$ given by \eqref{Cayley}. Then  
there exist isometric surjective operators $U:E\to \overline{\Ran(D_T)}$ and $U_*:E_*\to\overline{\Ran(D_{T^*})}$ such that
$$\Theta(\lambda)=U_*S(z)U^*, \hbox{ where } \lambda=\frac{z-i}{z+i}.$$
\end{proposition}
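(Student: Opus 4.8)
The plan is to pass through the Cayley transform \eqref{Cayley} and to reduce everything to the canonical operators $\Gamma,\Gamma_*$ produced in the proof of Lemma \ref{lemma 3.1}. For that construction one computes $F_b=\tfrac12\Delta=\tfrac12 D_T^2$ (using \eqref{Delta}), so that $\sqrt{F_b}=\tfrac{1}{\sqrt2}D_T$, $E=\overline{\Ran(D_T)}$ and $\Gamma u=\tfrac{1}{\sqrt2}D_T(A+i)u$. Applying the same construction to the maximal dissipative operator $-A^*$, whose Cayley transform is $T^*$ and whose defect is $D_{T^*}$ (via \eqref{Deltastar}), gives $E_*=\overline{\Ran(D_{T^*})}$ and $\Gamma_* u=\tfrac{1}{\sqrt2}D_{T^*}(-A^*+i)u$. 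For a general pair satisfying the hypotheses of Lemma \ref{lemma 3.1}, Lemma \ref{lem:unique} supplies unitaries $V:\overline{\Ran(D_T)}\to E$ and $V_*:\overline{\Ran(D_{T^*})}\to E_*$ intertwining them with the canonical operators $\Gamma^{\mathrm{can}},\Gamma_*^{\mathrm{can}}$; since $\Ran(\Gamma^{\mathrm{can}})$ is dense, inserting these into \eqref{eq:S} gives $S(z)V=V_*S^{\mathrm{can}}(z)$, hence $S(z)=V_*S^{\mathrm{can}}(z)V^*$. Setting $U=V^*$ and $U_*=V_*^*$ therefore reduces the claim to proving $\Theta(\lambda)=S^{\mathrm{can}}(z)$ on $\overline{\Ran(D_T)}$.

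Next I would rewrite the resolvents in \eqref{eq:S} through $T$. From \eqref{Cayley} one has $(A+i)^{-1}=\tfrac{1}{2i}(I-T)$ and $(A^*-i)^{-1}=\tfrac{1}{2i}(T^*-I)$, and a direct manipulation with $z=i\tfrac{1+\lambda}{1-\lambda}$ (equivalently $\lambda=\tfrac{z-i}{z+i}$) yields, for $u\in D(A)$ and $f=(A+i)u$,
\[ (A-z)u=\frac{z+i}{2i}\,(T-\lambda)f,\qquad (A^*-z)^{-1}=\frac{1}{z+i}\,(I-\lambda T^*)^{-1}(T^*-I). \]
Both formulas are legitimate for $z\in\C^+$, where $z\in\rho(A^*)$ and $\|\lambda T^*\|<1$. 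Substituting into $S^{\mathrm{can}}(z)\Gamma^{\mathrm{can}}u=\Gamma_*^{\mathrm{can}}(A^*-z)^{-1}(A-z)u$ and using that all functions of $T^*$ commute (so that $(T^*-I)^{-1}(I-\lambda T^*)^{-1}(T^*-I)=(I-\lambda T^*)^{-1}$), together with $\Gamma^{\mathrm{can}}u=\tfrac{1}{\sqrt2}D_T f$, collapses the expression to
\[ S^{\mathrm{can}}(z)\,D_T f=-\,D_{T^*}(I-\lambda T^*)^{-1}(T-\lambda)f, \]
whose right-hand side lies in $\Ran(D_{T^*})\subseteq\overline{\Ran(D_{T^*})}$, consistent with the codomain of $\Theta$.

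It then remains to check that this right-hand side equals $\Theta(\lambda)D_T f$, with $\Theta$ as in \eqref{SzN}. The one ingredient needed is the standard intertwining relation $TD_T=D_{T^*}T$ (equivalently $TD_T^2=D_{T^*}^2T$, which follows from $T(I-T^*T)=(I-TT^*)T$). Writing $\Theta(\lambda)D_T f=-TD_T f+\lambda D_{T^*}(I-\lambda T^*)^{-1}D_T^2 f$, replacing $TD_T=D_{T^*}T$ and $D_T^2=I-T^*T$, and factoring $D_{T^*}(I-\lambda T^*)^{-1}$ on the left, the bracket simplifies as
\[ -(I-\lambda T^*)T+\lambda(I-T^*T)=-T+\lambda, \]
so that $\Theta(\lambda)D_T f=-D_{T^*}(I-\lambda T^*)^{-1}(T-\lambda)f$, matching the previous display. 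Since $\{D_T f:f\in H\}$ is dense in $\overline{\Ran(D_T)}$ and both operators are bounded, $S^{\mathrm{can}}(z)=\Theta(\lambda)$, which, combined with $S(z)=V_*S^{\mathrm{can}}(z)V^*$ and $U=V^*$, $U_*=V_*^*$, gives $\Theta(\lambda)=U_*S(z)U^*$ and proves the proposition.

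The main obstacle is bookkeeping rather than conceptual: correctly identifying the canonical $\Gamma,\Gamma_*$ with $\tfrac{1}{\sqrt2}D_T(A+i)$ and $\tfrac{1}{\sqrt2}D_{T^*}(-A^*+i)$ (including the sign coming from using $-A^*$, which is exactly what accounts for the sign in the Lagrange identity \eqref{eq:LagrangeStar}), verifying the two Cayley resolvent formulas on the appropriate domains, and keeping the order of the non-commuting factors $T$ and $T^*$ straight. The decisive algebraic step is the intertwining $TD_T=D_{T^*}T$, which is precisely what converts the $D_{T^*}(I-\lambda T^*)^{-1}$ form of $S^{\mathrm{can}}$ into the Sz.-Nagy--Foia\lfhook{s} form \eqref{SzN}.
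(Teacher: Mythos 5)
Your proposal is correct, and it proves the identity on the same dense set $\{D_Tf: f\in H\}$ that the paper uses, but the execution differs in two ways worth recording. First, where the paper constructs $U,U_*$ directly — it uses the Lagrange identity to show $\frac{1}{2}D_T^2=\left(\Gamma(A+i)^{-1}\right)^*\Gamma(A+i)^{-1}$ and $\frac{1}{2}D_{T^*}^2=\left(\Gamma_*(A^*-i)^{-1}\right)^*\Gamma_*(A^*-i)^{-1}$, and then takes isometries intertwining the two factorizations, fixing the sign convention $D_{T^*}=-\sqrt{2}\,U_*\Gamma_*(A^*-i)^{-1}$ — you route the same fact through the canonical construction in the proof of Lemma \ref{lemma 3.1} (identifying $F_b=\frac{1}{2}\Delta=\frac{1}{2}D_T^2$, hence $\Gamma^{\mathrm{can}}=\frac{1}{\sqrt{2}}D_T(A+i)$, and analogously for $-A^*$, whose Cayley transform is indeed $T^*$ with defect $D_{T^*}^2$) and then invoke the uniqueness Lemma \ref{lem:unique}; these are equivalent, and your sign bookkeeping via $\Gamma_*^{\mathrm{can}}=-\frac{1}{\sqrt{2}}D_{T^*}(A^*-i)$ matches the paper's convention. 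Second, and more substantially, you do the verification in the contraction picture: the Cayley formulas $(A-z)u=\frac{z+i}{2i}(T-\lambda)f$ and $(A^*-z)^{-1}=\frac{1}{z+i}(I-\lambda T^*)^{-1}(T^*-I)$ collapse \eqref{eq:S} to the closed form $S^{\mathrm{can}}(z)D_Tf=-D_{T^*}(I-\lambda T^*)^{-1}(T-\lambda)f$, after which $TD_T=D_{T^*}T$ and the one-line identity $-(I-\lambda T^*)T+\lambda(I-T^*T)=\lambda-T$ show that $\Theta(\lambda)D_Tf$ equals the same expression; the paper instead converts $\Theta(\lambda)D_T$ into resolvent expressions of $A$ and $A^*$ via \eqref{DT} and \eqref{res} and only recognizes the definition of $S(z)$ at the very end. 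Your version is shorter and exhibits the classical ``quotient'' form of the Sz.-Nagy--Foia\lfhook{s} function as an intermediate object, while the paper's version stays inside the resolvent calculus used throughout the rest of the paper and never needs the explicit canonical $\Gamma$. The one step you leave implicit is the asserted identity $F_b=\frac{1}{2}D_T^2$: it is routine, but it is exactly the computation the paper writes out (showing $i\llangle \Gamma(A+i)^{-1}f,\Gamma(A+i)^{-1}g\rrangle_E=\llangle \frac{i}{2}D_T^2 f,g\rrangle_H$), and since your whole reduction to $\Gamma^{\mathrm{can}}$ rests on it, it deserves a displayed line rather than a parenthetical remark.
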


\begin{proof}
We first determine some of the expressions arising in $\Theta(\lambda)$ in terms of $A$ and $A^*$. We have
\ben\label{DT}
D_T^2&=& I-T^*T \ =\ I-(I+2i(A^*-i)^{-1})(I-2i(A+i)^{-1})\\ \nonumber
&=& 2i\left[(A+i)^{-1}-(A^*-i)^{-1}+2i (A^*-i)^{-1}(A+i)^{-1}  \right].
\een
Similarly,
\ben\label{DTstar}
D_{T^*}^2&=& 2i\left[(A+i)^{-1}-(A^*-i)^{-1}+2i (A+i)^{-1}(A^*-i)^{-1} \right].
\een
Moreover,
\bea
I-\lambda T^* &=& I-\lambda-2i\lambda(A^*-i)^{-1} \ =\ \left((1-\lambda)(A^*-i)-2i\lambda\right) (A^*-i)^{-1}\\
&=& (1-\lambda)\left(A^*-i-\frac{2i\lambda}{1-\lambda}\right)(A^*-i)^{-1} \ =\  (1-\lambda)\left(A^*-z\right)(A^*-i)^{-1},
\eea
so 
\ben\label{res}
(I-\lambda T^*)^{-1} &=&  \frac{1}{1-\lambda}(A^*-i)\left(A^*-z\right)^{-1}.
\een
From \eqref{eq:Lagrange}, we have that for any $f,g\in H$,
\bea
 i \llangle \Gamma (A+i)^{-1}f,\Gamma (A+i)^{-1}g\rrangle_{E}&=&
\llangle A(A+i)^{-1}f,(A+i)^{-1}g\rrangle_H - \llangle (A+i)^{-1}f, A(A+i)^{-1}g\rrangle_H \\
&=& \llangle \left[(A^*-i)^{-1}A(A+i)^{-1}-A^*(A^*-i)^{-1}(A+i)^{-1}\right]f,g\rrangle_H\\
&=& \llangle \left[(A^*-i)^{-1} -(A+i)^{-1}    -2i (A^*-i)^{-1}(A+i)^{-1}\right]f,g\rrangle_H\\
&=& \llangle \frac{i}{2}D_T^2f,g\rrangle_H.
\eea
This implies that  $\frac{1}{2}D_T^2=\left(\Gamma(A+i)^{-1}\right)^* \Gamma (A+i)^{-1}.$ Similarly,
$\frac{1}{2}D_{T^*}^2=\left(\Gamma_*(A^*-i)^{-1}\right)^* \Gamma_* (A^*-i)^{-1}.$
Therefore there exist $U$ and $U_*$ with the desired properties, such that 
$$D_T=\sqrt{2}\ U\Gamma (A+i)^{-1} \quad\hbox{ and }\quad D_{T^*}=-\sqrt{2}\ U_*\Gamma_*(A^*-i)^{-1}.$$
Noting that $TD_T^2=D_{T^*}^2T$ and thus, using the functional calculus $TD_T=D_{T^*}T$, we now have with \eqref{DT} and \eqref{res} that
\bea
\Theta(\lambda)D_T&=& -D_{T^*}T+\frac{2i\lambda}{1-\lambda}D_{T^*}(A^*-i)\left(A^*-z\right)^{-1}\left[(A+i)^{-1}-(A^*-i)^{-1}+2i (A^*-i)^{-1}(A+i)^{-1}  \right]\\
&=& D_{T^*}\left(-I+2i(A+i)^{-1}+(z-i)(A^*-i)\left(A^*-z\right)^{-1}\left[(A+i)^{-1}-(A^*-i)^{-1}+2i (A^*-i)^{-1}(A+i)^{-1}  \right]\right)\\
&=& D_{T^*}(A^*-i)\left(A^*-z\right)^{-1}\\
&& \quad
\left[(I-(z-i)(A^*-i)^{-1})(-I+2i(A+i)^{-1})
+(z-i)\left[(A+i)^{-1}-(A^*-i)^{-1}+2i (A^*-i)^{-1}(A+i)^{-1}  \right]\right]\\
&=& D_{T^*}(A^*-i)\left(A^*-z\right)^{-1}\left(-I+(z+i)(A+i)^{-1}\right).
\eea
Using our expressions for $D_{T^*}$ and $D_T$, we get
\bea
\Theta(\lambda)D_T&=& \sqrt{2}\ U_*\Gamma_*\left(A^*-z\right)^{-1}\left(-I+(z+i)(A+i)^{-1}\right)\\
&=& \sqrt{2}\ U_*\Gamma_*\left(A^*-z\right)^{-1}(A-z)(A+i)^{-1}\\
&=& \sqrt{2}\ U_*S(z)\Gamma(A+i)^{-1}\ = \ U_*S(z)U^*D_T,
\eea
proving that $\Theta(\lambda)=U_*S(z)U^*.$
\end{proof}

 \begin{remark}
This shows that the two approaches differ in the choice of the root of the imaginary part of the operator. The advantage of the \v{S}traus characteristic function for us is that we can often explicitly determine $\Gamma$ and $\Gamma_*$, while it is rarely possible to find explicit expressions for $D_T$ and $D_{T^*}$. 
\end{remark}

\section{Definition of the dilation}
 
Before studying the dilation itself, we first introduce its domain and show that it has several equivalent descriptions. Here and in what follows we let $\cH=L^2(\R^-,E_*)\oplus H \oplus L^2(\R^+,E)$, where $L^2(\R^-,E_*)$ and $L^2(\R^+,E)$ are suitable channels in the sense of Lax and Phillips \cite{LP67}.

\begin{definition} Let $\mu\in\C^-$ and $\lambda\in\C^+$. We define the subset $D(\cL)$ of $\cH$ by
\begin{eqnarray}\label{def:L}
 D(\cL) & = & \left\{ U=\U \; : \; u\in H,\; v_+\in H^1(\R^+,E),\; v_-\in H^1(\R^-,E_*), \; \right. \\
 & & \;\;\; u+(\Gamma_*(A^*+\mu)^{-1})^*v_{-}(0)\in D(A),\; u+(\Gamma(A+\lambda)^{-1})^*v_{+}(0)\in D(A^*), \nonumber \\
  & & \;\;\; \mbox{(I)} \; v_+(0)=S^*(-\mu)v_-(0)+i\Gamma\left(u+(\Gamma_*(A^*+\mu)^{-1})^*v_{-}(0)\right),
\nonumber \\
  & &    \;\;\; \mbox{(II)} \; v_-(0)=S(-\bar{\lambda})v_+(0)-i\Gamma_*\left( u+(\Gamma(A+\lambda)^{-1})^*v_{+}(0)\right)\quad\quad\quad\quad\quad\Bigg\} . \nonumber 
\end{eqnarray}
\end{definition} 
We will see in Lemma \ref{lem:5.4} that the conditions $(I)$ and $(II)$ are equivalent, so it is possible to omit one of them in the definition.
The numbers $\mu$ and $\lambda$ are regularisation parameters. For special situations, such as if the imaginary part of $A$ is relatively bounded, they are not needed. For general MDOs, however, the regularisation is necessary. Besides the disadvantage of complicating the expressions in the boundary conditions $(I)$ and $(II)$, the presence of the parameters also may bring some advantages, e.g.~allowing to simplify the conditions by a particular choice of the parameters or by taking limits whenever the terms in \eqref{def:L} admit suitable asymptotics.

We now  show that $D(\cL)$ is independent of the choice of $\mu\in\C^-$ and $\lambda\in\C^+$. First, we show this for the conditions to lie in the domains of the operators, and in a second lemma we consider $(I)$ and $(II)$.

\begin{lem}
Let $u\in H$, $v_-(0)\in E_*$ and $v_+(0)\in E$.
If $u+(\Gamma_*(A^*+\mu_0)^{-1})^*v_{-}(0)\in D(A)$ for some $\mu_0\in\C^-$, then $u+(\Gamma_*(A^*+\mu)^{-1})^*v_{-}(0)\in D(A)$ for all $\mu_0\in\C^-$.
Similarly, if $u+(\Gamma(A+\lambda_0)^{-1})^*v_{+}(0)\in D(A^*)$ for some $\lambda_0\in\C^+$, then $u+(\Gamma(A+\lambda)^{-1})^*v_{+}(0)\in D(A^*)$ for all $\lambda\in\C^+$.
\end{lem}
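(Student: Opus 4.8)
The plan is to exploit the fact that $u$ occurs identically in the expressions for $\mu$ and for $\mu_0$, so the statement reduces to a claim about a single difference term. Concretely, it suffices to prove that
\[
\left(\Gamma_*(A^*+\mu)^{-1}\right)^*v_-(0)-\left(\Gamma_*(A^*+\mu_0)^{-1}\right)^*v_-(0)\in D(A),
\]
because adding this difference (which lies in $D(A)$) to $u+(\Gamma_*(A^*+\mu_0)^{-1})^*v_-(0)\in D(A)$ immediately yields $u+(\Gamma_*(A^*+\mu)^{-1})^*v_-(0)\in D(A)$. Note that all the adjoints here are genuine bounded operators from $E_*$ to $H$: since $A^*$ is closed, $(A^*+\mu)^{-1}$ maps $H$ boundedly into $D(A^*)$ equipped with the graph norm, and $\Gamma_*$ is bounded in that graph norm, so $\Gamma_*(A^*+\mu)^{-1}\in B(H,E_*)$ and its adjoint is well defined.

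The key computation is a resolvent identity followed by taking adjoints. I would start from
\[
\Gamma_*\left[(A^*+\mu)^{-1}-(A^*+\mu_0)^{-1}\right]=(\mu_0-\mu)\,\Gamma_*(A^*+\mu)^{-1}(A^*+\mu_0)^{-1},
\]
which is the standard first resolvent identity for $A^*$. Taking adjoints of both sides, and using that $\left((A^*+\mu_0)^{-1}\right)^*=(A+\overline{\mu_0})^{-1}$, gives
\[
\left(\Gamma_*(A^*+\mu)^{-1}\right)^*-\left(\Gamma_*(A^*+\mu_0)^{-1}\right)^*
=\overline{(\mu_0-\mu)}\,(A+\overline{\mu_0})^{-1}\left(\Gamma_*(A^*+\mu)^{-1}\right)^*.
\]
The crucial observation is then purely structural: the right-hand side has the form $(A+\overline{\mu_0})^{-1}$ applied to a vector of $H$, hence lies in $\Ran\big((A+\overline{\mu_0})^{-1}\big)=D(A)$. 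Applying this to $v_-(0)$ establishes the required membership and completes the first assertion.

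The second statement is proved in exactly the same way with the roles of $A$ and $A^*$ interchanged: the resolvent identity for $A$ together with $\left((A+\lambda_0)^{-1}\right)^*=(A^*+\overline{\lambda_0})^{-1}$ yields that the difference of the two $\Gamma$-terms equals $\overline{(\lambda_0-\lambda)}\,(A^*+\overline{\lambda_0})^{-1}\left(\Gamma(A+\lambda)^{-1}\right)^*v_+(0)\in D(A^*)$. The main point to check carefully—really the only obstacle—is that the resolvents appearing after conjugation actually exist, i.e.\ that the relevant spectral parameters lie in the resolvent sets. For the first part, $\mu_0\in\C^-$ gives $-\overline{\mu_0}\in\C^-\subseteq\rho(A)$ since $A$ is an MDO with $\sigma(A)\subseteq\overline{\C^+}$; for the second part, $\lambda_0\in\C^+$ gives $-\overline{\lambda_0}\in\C^+\subseteq\rho(A^*)$ because $A^*$ is anti-dissipative, so $\sigma(A^*)\subseteq\overline{\C^-}$. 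Once these inclusions are recorded, the boundedness of all operators involved makes the adjoint manipulations routine, and no further delicate estimate is needed.
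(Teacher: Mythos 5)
Your proposal is correct and follows essentially the same route as the paper's own proof: reduce to the difference term, apply the first resolvent identity for $A^*$, take adjoints to obtain $(\bar{\mu}_0-\bar{\mu})(A+\bar{\mu}_0)^{-1}\left(\Gamma_*(A^*+\mu)^{-1}\right)^*v_-(0)$, and observe that this lies in $\Ran\left((A+\bar{\mu}_0)^{-1}\right)=D(A)$. Your additional verification that $-\overline{\mu_0}\in\rho(A)$ and $-\overline{\lambda_0}\in\rho(A^*)$ is a point the paper leaves implicit, but it changes nothing in substance.
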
 

\begin{proof}
We show that $u+(\Gamma_*(A^*+\mu)^{-1})^*v_{-}(0)\in D(A)$ for all $\mu_0\in\C^-$. Clearly, it suffices to show that 
$$\left((\Gamma_*(A^*+\mu)^{-1})^*-(\Gamma_*(A^*+\mu_0)^{-1})^*\right)v_{-}(0)= \left[\Gamma_*((A^*+\mu)^{-1}-(A^*+\mu_0)^{-1})\right]^*v_{-}(0) \in D(A).$$
We have 
\bea
\left[\Gamma_*((A^*+\mu)^{-1}-(A^*+\mu_0)^{-1})\right]^*v_{-}(0)&=& \left[(\mu_0-\mu)\Gamma_*((A^*+\mu)^{-1}(A^*+\mu_0)^{-1})\right]^*v_{-}(0)\\
&=& (\bar{\mu}_0-\bar{\mu})(A+\bar{\mu}_0)^{-1}\left(\Gamma_*(A^*+\mu)^{-1}\right)^*v_-(0),
\eea
which clearly lies in $D(A)$.
\end{proof}

Now we are in a position to prove independence of the boundary conditions  $(I)$ and $(II)$  from the choice of $\mu\in\C^-$ and $\lambda\in\C^+$.
\begin{lem}\label{lemma:2.2}
The conditions (I) and (II) in \eqref{def:L}
are independent of $\mu\in\C^{-}$ and $\lambda\in\C^+$, respectively.
\end{lem}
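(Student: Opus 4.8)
The plan is to show that conditions (I) and (II) do not depend on the choice of the regularisation parameters $\mu\in\C^-$ and $\lambda\in\C^+$ by demonstrating that replacing $\mu$ (or $\lambda$) with another parameter $\mu'\in\C^-$ (respectively $\lambda'\in\C^+$) leaves each boundary equation invariant, given that $u$, $v_-(0)$, $v_+(0)$ are held fixed and the domain conditions are satisfied. By symmetry it suffices to treat condition (I) and its dependence on $\mu$; the argument for its dependence on $\lambda$ (which enters only through the vector $u+(\Gamma(A+\lambda)^{-1})^*v_+(0)$ inside the $\Gamma$-term) and the parallel treatment of (II) will be analogous.

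First I would isolate the only $\mu$-dependent piece of (I), namely the term
$$
i\,\Gamma\!\left(u+(\Gamma_*(A^*+\mu)^{-1})^*v_-(0)\right),
$$
together with the factor $S^*(-\mu)v_-(0)$. The key is to compute how the combination changes when $\mu$ is replaced by $\mu'$. Using the resolvent identity as in the previous lemma, the difference
$$
\left[(\Gamma_*(A^*+\mu)^{-1})^*-(\Gamma_*(A^*+\mu')^{-1})^*\right]v_-(0)
= (\bar\mu'-\bar\mu)(A+\bar\mu')^{-1}\left(\Gamma_*(A^*+\mu)^{-1}\right)^*v_-(0)
$$
lies in $D(A)$, so $\Gamma$ can be applied to it. The plan is then to apply the $\Gamma$-operator to this difference and rewrite $\Gamma(A+\bar\mu')^{-1}$ using the identity \eqref{eq:Sgammastar} relating $(\Gamma_*(A^*+\mu)^{-1})^*S(\bar z)$ to resolvents of $A$, or equivalently its adjoint form expressing $\Gamma(A+\cdot)^{-1}$ composed with $(\Gamma_*(A^*+\mu)^{-1})^*$. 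Simultaneously I would expand $S^*(-\mu)-S^*(-\mu')$ using the difference identity \eqref{eq:Sdiff} (applied to $S_*$ via Lemma \ref{lemma:adj}), which produces precisely a term of the form $(\text{const})\,\Gamma(A+\cdot)^{-1}(\Gamma_*(A^*+\cdot)^{-1})^*$.

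The heart of the proof is the cancellation: the change in the $\Gamma$-term and the change in the $S^*(-\mu)v_-(0)$-term must combine to zero. Concretely, I expect that after substituting \eqref{eq:Sdiff} and \eqref{eq:Sgammastar} and collecting the factors of $(\bar\mu'-\bar\mu)$, the two contributions are negatives of one another, leaving $v_+(0)$ expressed identically whether computed with $\mu$ or $\mu'$. The main obstacle will be the bookkeeping of the spectral parameters and complex conjugates: matching the arguments $-\mu$, $-\bar\lambda$, $\bar\mu'$ in $S$, $S^*$, $S_*$ and in the Green-type identities requires care, and one must verify that the scalar prefactors $(\mu'-\mu)$, $(\bar\mu'-\bar\mu)$ and the factors $i$ align exactly so the cancellation is exact rather than merely up to sign. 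Once this algebraic identity is established for (I), the independence of (II) from $\lambda$ follows by the completely symmetric computation using \eqref{GreenStar}, \eqref{eq:Stildegammastar} and the $S_*$-difference identity, and the equivalence of (I) and (II) (to be shown in Lemma \ref{lem:5.4}) then yields independence of both conditions from both parameters.
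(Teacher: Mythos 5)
Your proposal is correct and takes essentially the same route as the paper: the paper likewise fixes $v_-(0)$, reduces the claim to showing $S^*(-\mu)-S^*(-\mut)+i\Gamma\left(\Gamma_*((A^*+\mu)^{-1}-(A^*+\mut)^{-1})\right)^*=0$ on $E_*$, evaluates the resolvent difference via the Hilbert identity as $(\mutb-\mub)\left(\Gamma(A+\mutb)^{-1}\right)\left(\Gamma_*(A^*+\mu)^{-1}\right)^*$, and cancels it against the adjoint of the difference identity \eqref{eq:Sdiff} --- the signs align exactly as you anticipate. One small correction that does not affect the argument: condition (I) contains no $\lambda$ at all (and (II) no $\mu$), and in (II) the parameter $\lambda$ enters through both $S(-\bar{\lambda})$ and the $\Gamma_*$-term, so \eqref{eq:Sgammastar} is not needed; only \eqref{eq:Sdiff} and the resolvent identity are used.
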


\begin{proof}
We check this for (I). We need to show that for any $\mu,\tilde{\mu}\in\Cc^{-}$ we have
\be\label{eq:zero} S^*(-\mu)-S^*(-\mut)+i\Gamma\left(\Gamma_*((A^*+\mu)^{-1}-(A^*+\mut)^{-1})\right)^*=0 \hbox{ on } E_*.\ee
Since, 
\bea
\Gamma\left(\Gamma_*((A^*+\mu)^{-1}-(A^*+\mut)^{-1})\right)^*&=&\Gamma\left((\mut-\mu)\Gamma_*((A^*+\mu)^{-1}(A^*+\mut)^{-1})\right)^*\\
&=&(\mutb-\mub) \left(\Gamma (A+\mutb)^{-1}\right)\left(\Gamma_*(A^*+\mu)^{-1}\right)^*,
\eea
the result follows from taking adjoints in \eqref{eq:Sdiff}. 
\end{proof}

The next lemma shows that the conditions $(I)$ and $(II)$ in \eqref{def:L} are equivalent.
\begin{lemma}\label{lem:5.4}
Let $\mu\in\C^-$, $\lambda\in\C^+$, $u\in H$, $v_+(0)\in E$ and $v_-(0)\in E_*$. 
Then $u+(\Gamma_*(A^*+\mu)^{-1})^*v_{-}(0)\in D(A)$ and 
$$ \mbox{(I)} \; v_+(0)=S^*(-\mu)v_-(0)+i\Gamma\left(u+(\Gamma_*(A^*+\mu)^{-1})^*v_{-}(0)\right)$$ holds if and only if
 $u+(\Gamma(A+\lambda)^{-1})^*v_{+}(0)\in D(A^*)$ and 
$$  \mbox{(II)} \; v_-(0)=S(-\bar{\lambda})v_+(0)-i\Gamma_*\left( u+(\Gamma(A+\lambda)^{-1})^*v_{+}(0)\right).$$
\end{lemma}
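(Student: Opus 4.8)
The goal is to show the equivalence of the two boundary conditions (I) and (II), together with the corresponding domain memberships. The plan is to introduce abbreviations $w_- := u + (\Gamma_*(A^*+\mu)^{-1})^* v_-(0)$ and $w_+ := u + (\Gamma(A+\lambda)^{-1})^* v_+(0)$, so that (I) reads $v_+(0) = S^*(-\mu) v_-(0) + i\Gamma w_-$ with $w_- \in D(A)$, and (II) reads $v_-(0) = S(-\bar\lambda) v_+(0) - i\Gamma_* w_+$ with $w_+ \in D(A^*)$. By Lemma~\ref{lemma:2.2} the conditions are independent of $\mu \in \C^-$ and $\lambda \in \C^+$, so I am free to make a convenient choice; the natural one is to relate $\mu$ and $\lambda$ through $\lambda = -\bar\mu$ (equivalently $-\mu = \bar\lambda$), which makes the two Schur-complement-type factors $S^*(-\mu)$ and $S(-\bar\lambda)$ into genuine adjoints of each other via Lemma~\ref{lemma:adj}, namely $S^*(-\mu) = S_*(-\bar\mu) = S_*(\lambda)$ and $S(-\bar\lambda) = S(-\bar\lambda)$, with $S(-\bar\lambda)$ and $S^*(-\mu)$ acting between $E$ and $E_*$ in opposite directions.

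First I would assume (I) holds and express $w_+$ in terms of $w_-$. The key observation is that the two regularised vectors differ by a controlled term: writing $w_+ - w_- = (\Gamma(A+\lambda)^{-1})^* v_+(0) - (\Gamma_*(A^*+\mu)^{-1})^* v_-(0)$ and substituting the expression for $v_+(0)$ from (I), one obtains, after using identity \eqref{eq:Sgammastar} (which provides exactly $(\Gamma_*(A^*+\mu)^{-1})^* S(\cdot)$ in terms of $(\Gamma(A+z)^{-1})^*$ with a resolvent correction), a formula for $w_+$ as $w_-$ plus a resolvent of $A$ applied to $\Gamma w_-$. The point of \eqref{eq:Sgammastar} is that the operator $(\Gamma(A+\lambda)^{-1})^* \Gamma$ is, by the Green identity \eqref{Green}, expressible through resolvents of $A$ and $A^*$, which is precisely what is needed to show that $w_+$ lands in $D(A^*)$. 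I expect that the clean way to package this is to verify directly that $w_+ = w_- + (\text{something in } D(A) \cap D(A^*))$ and that membership $w_+ \in D(A^*)$ follows because $w_- \in D(A)$ with $A w_- = A^* w_-$ up to the $\Gamma$-correction, exactly as in the well-definedness lemma for $S$.

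Once the domain membership is settled, verifying (II) itself is an algebraic identity: substitute the derived expression for $w_+$ and for $v_+(0)$ into the right-hand side of (II) and check it reduces to $v_-(0)$. This computation will lean on Lemma~\ref{lem:inverse} ($S_* S = I$, $S S_* = I$ at points of $\rho(A)\cap\rho(A^*)$) to invert the $S$-factors, on Lemma~\ref{lemma:adj} to convert between $S$ and $S_*$, and on the Green identities \eqref{Green}, \eqref{GreenStar} to collapse the resolvent products. The reverse implication (II) $\Rightarrow$ (I) is entirely symmetric, obtained by swapping the roles $A \leftrightarrow A^*$, $\Gamma \leftrightarrow \Gamma_*$, $S \leftrightarrow S_*$, $\mu \leftrightarrow \bar\lambda$, and invoking the starred identities \eqref{GreenStar} and \eqref{eq:Stildegammastar} in place of their unstarred counterparts.

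The main obstacle I anticipate is bookkeeping rather than conceptual: keeping the resolvent arguments, the signs of the spectral parameters, and the domains $E$ versus $E_*$ consistent while matching the two-point $S$-difference formula \eqref{eq:Sdiff} with the factorisation coming from \eqref{eq:Sgammastar}. The genuine content lies in showing that the $\Gamma$-correction term produced by (I) is exactly cancelled, modulo the invertibility $S_* S = I$, by the corresponding term in (II); getting that cancellation to be exact (not merely up to something in the kernel of $\Gamma$) is where the interplay of Lemma~\ref{lem:inverse} and the Green identities must be used carefully. A cleaner alternative, which I would try first to avoid the heavy computation, is to recognise both (I) and (II) as two halves of a single unitary boundary relation between the data $(v_-(0), v_+(0), \Gamma w_-, \Gamma_* w_+)$ and to observe that the $2\times 2$ block operator built from $S$, $S_*$, $\Gamma$, $\Gamma_*$ is, thanks to Lemmas~\ref{lemma:adj} and~\ref{lem:inverse}, invertible with the stated inverse — so that (I) and (II) are simply two equivalent ways of writing the same linear constraint.
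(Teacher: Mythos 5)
The domain-membership half of your plan can be made to work and is essentially the paper's own first step (with the roles of (I) and (II) exchanged), but the step in which you verify the second boundary condition contains a genuine gap: you propose to close the computation ``modulo the invertibility $S_*S=I$'' (Lemma \ref{lem:inverse}), and your preferred ``cleaner alternative'' likewise rests on inverting the $2\times 2$ block built from $S$, $S_*$, $\Gamma$, $\Gamma_*$. Lemma \ref{lem:inverse} holds only at points of $\rho(A)\cap\rho(A^*)$, whereas the characteristic functions in (I) and (II) are evaluated at $-\mu$ and $-\bar\lambda$, which both lie in $\C^+$. There $S$ is merely a contraction and in general has no inverse, and $S_*$ need not even be defined (it is defined on $\rho(A)$, which for a general MDO contains no point of $\C^+$). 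Worse, $\rho(A)\cap\rho(A^*)$ can be empty, and one can even have $\dim E\neq\dim E_*$: in the paper's example with $A=i\frac{d}{dx}$, $D(A)=H^1_0(\R^+)$, one has $E=\{0\}$ and $E_*=\C$, so no identity of the form $S(z)S_*(z)=I_{E_*}$ can hold at any point, yet the lemma must be proved in this generality. What actually makes the verification close is not invertibility but the defect formula of Lemma \ref{lem:SStarS}: substituting one boundary condition into the other produces the operator $I_E-S^*(-\mu)S(-\bar{\lambda})$, which is \emph{not} zero; Lemma \ref{lem:SStarS} identifies it as $i(\bar{\lambda}-\bar{\mu})\left(\Gamma(A+\bar{\mu})^{-1}\right)\left(\Gamma(A+\lambda)^{-1}\right)^*$, and it is precisely this nonzero defect that cancels the term obtained by applying $i\Gamma$ to the resolvent correction in \eqref{eq:term2}, the remaining terms vanishing by \eqref{GreenStar}. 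Replacing this cancellation-of-defects mechanism by an inversion that is unavailable means the key step of your argument fails.

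Two further, fixable, slips. First, your normalisation $\lambda=-\bar{\mu}$ is impossible: $\mu\in\C^-$ gives $-\bar{\mu}\in\C^-$, while $\lambda\in\C^+$; the relation you presumably intend is $\mu=\bar{\lambda}$, which is what makes $S^*(-\mu)$ and $S(-\bar{\lambda})$ adjoints of one another via Lemma \ref{lemma:adj}. (In fact no alignment of the parameters is needed: the paper keeps $\mu$ and $\lambda$ independent throughout.) Second, for your chosen direction (I) $\Rightarrow$ (II) the substitution produces $\left(\Gamma(A+\lambda)^{-1}\right)^*S^*(-\mu)$, which is handled by \eqref{eq:Stildegammastar} after writing $S^*(-\mu)=S_*(-\bar{\mu})$ by Lemma \ref{lemma:adj}, not by \eqref{eq:Sgammastar}; the latter is the identity suited to the converse direction (II) $\Rightarrow$ (I), which is the one the paper writes out. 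With these repairs, exhibiting $w_+$ as a resolvent of $A^*$ applied to an explicit vector via the intertwining identity and \eqref{Green} does go through; it is only the subsequent verification of the remaining boundary condition that must be rebuilt around Lemma \ref{lem:SStarS} rather than Lemma \ref{lem:inverse}.
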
 

\begin{proof}
We will assume that $u+(\Gamma(A+\lambda)^{-1})^*v_{+}(0)\in D(A^*)$ and $(II)$ holds. The proof of the converse is similar. First, we need to show that $w:=u+(\Gamma_*(A^*+\mu)^{-1})^*v_{-}(0)\in D(A)$. Now,
$$w=u+(\Gamma_*(A^*+\mu)^{-1})^*\left[S(-\bar{\lambda})v_+(0)-i\Gamma_*\left( u+(\Gamma(A+\lambda)^{-1})^*v_{+}(0)\right)\right].$$
Using \eqref{eq:Sgammastar}, we get
$$ w= u +\left[I+(\bar{\lambda}-\bar{\mu})(A+\bar{\mu})^{-1}\right]\left(\Gamma(A+\lambda)^{-1}\right)^* v_+(0) - i (\Gamma_*(A^*+\mu)^{-1})^*\Gamma_*\left( u+(\Gamma(A+\lambda)^{-1})^*v_{+}(0)\right).$$
Clearly, $w\in D(A)$ if and only if $\tilde{w}\in D(A)$, where
\bea
\tilde{w}&:=& u +\left(\Gamma(A+\lambda)^{-1}\right)^* v_+(0) - i (\Gamma_*(A^*+\mu)^{-1})^*\Gamma_*\left( u+(\Gamma(A+\lambda)^{-1})^*v_{+}(0)\right)\\
&=& \left[I-i (\Gamma_*(A^*+\mu)^{-1})^*\Gamma_*\right]\left(u +\left(\Gamma(A+\lambda)^{-1}\right)^* v_+(0)\right)
.
\eea
Next, using that $u+(\Gamma(A+\lambda)^{-1})^*v_{+}(0)\in D(A^*)$, for $\nu\in\C^+$ there exists $f$ such that 
$$u+(\Gamma(A+\lambda)^{-1})^*v_{+}(0)=(A^*-\nu)^{-1}f.$$
Inserting this in $\tilde{w}$ and using \eqref{GreenStar}, we get
\bea
\tilde{w}&=& (A^*-\nu)^{-1}f + \left[(A+\bar{\mu})^{-1}-(A^*-\nu)^{-1}+(\bar{\mu} +\nu)(A+\bar{\mu})^{-1} (A^*-\nu)^{-1}\right] f\\
&=& \left[(A+\bar{\mu})^{-1}+(\bar{\mu} +\nu)(A+\bar{\mu})^{-1} (A^*-\nu)^{-1}\right] f \in D(A).
\eea

It remains to check $(I)$, which using Lemma \ref{lemma:adj}, is equivalent to
\ben \label{eq:I}
v_+(0)-S^*(-\mu)S(-\bar{\lambda})v_+(0) &=& -iS_*(-\bar{\mu})\Gamma_*\left( u+(\Gamma(A+\lambda)^{-1})^*v_{+}(0)\right)\\
&& \hspace{-30pt}+i\Gamma\left(u+(\Gamma_*(A^*+\mu)^{-1})^*\left(S(-\bar{\lambda})v_+(0)-i\Gamma_*\left( u+(\Gamma(A+\lambda)^{-1})^*v_{+}(0)\right)\right)\right).
\nonumber
\een
By Lemma \ref{lem:SStarS}, the left hand side of \eqref{eq:I} is given by
\be\label{eq:lhs}
i(\bar{\lambda}-\bar{\mu})\left(\Gamma(A+\bar{\mu})^{-1}\right)\left(\Gamma(A+\lambda)^{-1}\right)^* v_+(0).
\ee
We now calculate the terms on the right hand side of \eqref{eq:I}. By the definition of $S_*$, we have
\be\label{eq:term1}
S_*(-\bar{\mu})\Gamma_*\left( u+(\Gamma(A+\lambda)^{-1})^*v_{+}(0)\right) =  \Gamma (A+\bar{\mu})^{-1}(A^*+
\bar{\mu})\left( u+(\Gamma(A+\lambda)^{-1})^*v_{+}(0)\right),
\ee
by \eqref{eq:Sgammastar} we have 
\ben\label{eq:term2}
(\Gamma_*(A^*+\mu)^{-1})^*S(-\bar{\lambda})v_+(0)&=& \left[I+(\bar{\lambda}-\bar{\mu})(A+\bar{\mu})^{-1}\right]\left(\Gamma(A+\lambda)^{-1}\right)^* v_+(0)\\
&=& \nonumber\left(\Gamma(A+\lambda)^{-1}\right)^* v_+(0) + (\bar{\lambda}-\bar{\mu})(A+\bar{\mu})^{-1}\left(\Gamma(A+\lambda)^{-1}\right)^* v_+(0) .
\een
Applying $i\Gamma$ to the second term (which lies in $D(A)$) gives the same as in \eqref{eq:lhs}, so this will precisely cancel the left hand side in \eqref{eq:I}. 
The remaining terms on the right hand side of \eqref{eq:I} now equal
\bea
&&-i\Gamma (A+\bar{\mu})^{-1}(A^*+\bar{\mu})(u+(\Gamma(A+\lambda)^{-1})^*v_{+}(0)) \\
&&+i\Gamma\left(u+\left(\Gamma(A+\lambda)^{-1}\right)^* v_+(0)-i(\Gamma_*(A^*+\mu)^{-1})^*\Gamma_*(u+(\Gamma(A+\lambda)^{-1})^*v_{+}(0))\right).
\eea
Setting $u+(\Gamma(A+\lambda)^{-1})^*v_{+}(0)=(A^*+\bar{\lambda})^{-1}f$, this is 
\bea
-i\Gamma \left[ (A+\bar{\mu})^{-1}(A^*+\bar{\mu})(A^*+\bar{\lambda})^{-1}f 
-(A^*+\bar{\lambda})^{-1}f+i(\Gamma_*(A^*+\mu)^{-1})^*\left(\Gamma_*(A^*+\bar{\lambda})^{-1}\right)f\right].
\eea
Applying \eqref{GreenStar} shows that this equals zero, as required.
\end{proof}

\begin{remark}
 This shows which are the free parameters in the description of $D(\cL)$: 
\begin{itemize}
	\item The value for $v_+(0)\in E$,
	\item a vector $h\in D(A^*)$ (such that $u=h-(\Gamma(A+\lambda)^{-1})^*v_+(0)$) and
	\item two vector-valued functions $w_\pm\in H^1(\R^{\pm})$ with $w_+(0)=0=w_-(0)$.
\end{itemize}
\end{remark}

So far, we have discussed the domain of the dilation. We now present two equivalent formulae for the part of the action of the dilation on the original Hilbert space $H$.
\begin{definition}
Let $\mu\in\C^-$ and $\lambda\in\C^+$. For $U\in D(\cL)$ as given in \eqref{def:L}, define two operators $T,T_*:D(\cL)\to H$ by
\be\label{eq:T}
TU:=A^*(u+(\Gamma(A+\lambda)^{-1})^*v_{+}(0))+\bar{\lambda}(\Gamma(A+\lambda)^{-1})^*v_{+}(0)
\ee
and
\be\label{eq:Tstar}
T_*U:=A(u+(\Gamma_*(A^*+\mu)^{-1})^*v_{-}(0))+\bar{\mu}(\Gamma_*(A^*+\mu)^{-1})^*v_{-}(0).
\ee
\end{definition}
Note that we have
\be\label{eq:T2}
TU=\left(A^*+\bar{\lambda}\right)(u+(\Gamma(A+\lambda)^{-1})^*v_{+}(0))-\bar{\lambda}u
\ee
and 
\be\label{eq:Tstar2}
T_*U=\left(A+\bar{\mu}\right)(u+(\Gamma_*(A^*+\mu)^{-1})^*v_{-}(0))-\bar{\mu}u.
\ee

\begin{lemma}\label{lem:T}
The operators $T$ and $T_*$ coincide on  $D(\cL)$. 
\end{lemma}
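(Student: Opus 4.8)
The plan is to show $TU = T_*U$ by computing their difference and reducing it to an algebraic identity that follows from the boundary conditions. Using the compact forms \eqref{eq:T2} and \eqref{eq:Tstar2}, I would write
\[
TU - T_*U = \left(A^*+\bar{\lambda}\right)\bigl(u+(\Gamma(A+\lambda)^{-1})^*v_{+}(0)\bigr) - \bar{\lambda}u - \left(A+\bar{\mu}\right)\bigl(u+(\Gamma_*(A^*+\mu)^{-1})^*v_{-}(0)\bigr) + \bar{\mu}u.
\]
The key observation is that both arguments $u+(\Gamma(A+\lambda)^{-1})^*v_{+}(0)\in D(A^*)$ and $u+(\Gamma_*(A^*+\mu)^{-1})^*v_{-}(0)\in D(A)$ by membership in $D(\cL)$, so all operator applications are legitimate. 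The strategy is to introduce the single vector $h := u+(\Gamma(A+\lambda)^{-1})^*v_+(0)\in D(A^*)$ and express everything in terms of $h$ and the boundary data, so that the claimed equality becomes a relation among $v_+(0)$, $v_-(0)$ and $\Gamma, \Gamma_*$ images that we can verify using the boundary condition and the Green identities.

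First I would substitute $(A^*+\bar\lambda)(\Gamma(A+\lambda)^{-1})^*v_+(0)$ by a cleaner expression. Since for $z\in\rho(A^*)$ one has $(\Gamma(A+\lambda)^{-1})^* = (A^*+\bar\lambda)^{-1}\Gamma^*$ in the appropriate weak sense, the combination $(A^*+\bar\lambda)(\Gamma(A+\lambda)^{-1})^*$ should collapse; more robustly, I would write $(\Gamma(A+\lambda)^{-1})^*v_+(0)$ as $(A^*+\bar\lambda)^{-1}$ applied to some vector and similarly for the $\Gamma_*$ term with $(A+\bar\mu)^{-1}$, mirroring the device used at the end of the proof of Lemma \ref{lem:5.4} where the substitutions $u+(\Gamma(A+\lambda)^{-1})^*v_+(0)=(A^*+\bar\lambda)^{-1}f$ and the companion for $D(A)$ reduce the membership question to \eqref{GreenStar}. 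The aim is to get $TU-T_*U$ into a form where the difference of the two ``regularised'' reconstructions of $u$ from its boundary data appears, and then show this difference is zero.

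The heart of the matter will be to invoke the boundary conditions $(I)$ and $(II)$ from \eqref{def:L}, which tie $v_+(0)$ and $v_-(0)$ together through $S^*(-\mu)$, $S(-\bar\lambda)$, $\Gamma$ and $\Gamma_*$. I expect the cleanest route is to prove the identity on the dense set obtained by writing $h = (A^*-\nu)^{-1}f$ (or $(A^*+\bar\lambda)^{-1}f$) as in Lemma \ref{lem:5.4}, apply the abstract Green function identities \eqref{Green} and \eqref{GreenStar} together with \eqref{eq:Sgammastar}/\eqref{eq:Stildegammastar} to rewrite the $\Gamma_*(A^*+\mu)^{-1}$-type terms, and then watch the $S$-dependent contributions cancel exactly as they did in the verification of $(I)\Leftrightarrow(II)$.

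\emph{The main obstacle} will be bookkeeping: tracking the regularisation parameters $\mu$ and $\lambda$ through the two different reconstructions and ensuring that the $S^*(-\mu)v_-(0)$ and $S(-\bar\lambda)v_+(0)$ pieces, once expanded via Lemma \ref{lem:SStarS} and \eqref{eq:Sgammastar}, combine so that the genuinely operator-valued (unbounded) contributions match and only a manifestly vanishing algebraic remainder is left. Since Lemma \ref{lem:5.4} already established the equivalence of $(I)$ and $(II)$ by exactly this mechanism, I anticipate that $TU=T_*U$ reduces to the \emph{same} Green-identity cancellation, and the real work is organising the computation rather than finding a new idea; an alternative, possibly shorter, route would be to show that $TU - T_*U$, expressed via \eqref{eq:T} and \eqref{eq:Tstar}, equals $\bar\lambda$ times one reconstruction minus $\bar\mu$ times the other in a way that telescopes once $(I)$ is used to relate the two boundary values.
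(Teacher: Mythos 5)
Your plan is correct and it takes a genuinely different route from the paper's proof. The paper argues by duality: it pairs $TU$ against test vectors $g$ from the dense set $D(A)$, moves every operator onto $g$ by adjunction, applies the Lagrange identity \eqref{eq:Lagrange} to the pair $g$, $u+(\Gamma_*(A^*+\mu)^{-1})^*v_-(0)$, inserts boundary condition $(I)$, and finishes with the defining relation \eqref{eq:S} in the form $S(-\mu)\Gamma g=\Gamma_*(A^*+\mu)^{-1}(A+\mu)g$; no Green identities and no resolvent substitution appear. Your route is a direct computation in $H$, and it does close up exactly as you anticipate: writing $h:=u+(\Gamma(A+\lambda)^{-1})^*v_+(0)\in D(A^*)$ and $w:=u+(\Gamma_*(A^*+\mu)^{-1})^*v_-(0)\in D(A)$, condition $(II)$ together with \eqref{eq:Sgammastar} gives
$$ w=h+(\bar\lambda-\bar\mu)(A+\bar\mu)^{-1}\left(\Gamma(A+\lambda)^{-1}\right)^*v_+(0)-i\left(\Gamma_*(A^*+\mu)^{-1}\right)^*\Gamma_* h, $$
and after the substitution $h=(A^*+\bar\lambda)^{-1}f$, identity \eqref{GreenStar} (applied with $\bar\mu$ in the role of $\lambda$ and $\bar\lambda$ in the role of $\mu$) collapses the last term, leaving $w=(A+\bar\mu)^{-1}f+(\bar\mu-\bar\lambda)(A+\bar\mu)^{-1}u$. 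Hence $T_*U=(A+\bar\mu)w-\bar\mu u=f-\bar\lambda u=(A^*+\bar\lambda)h-\bar\lambda u=TU$. Note that only $(II)$, \eqref{eq:Sgammastar} and \eqref{GreenStar} are needed; Lemma \ref{lem:SStarS}, which you list among possible tools, never enters. Comparing the two: the paper's weak argument is shorter and all unbounded-operator manipulations are automatically legitimate because everything lands on $g\in D(A)$, whereas your computation is heavier on bookkeeping but recycles the Lemma \ref{lem:5.4} machinery and yields the explicit formula $TU=T_*U=f-\bar\lambda u$ en route, which also makes Corollary \ref{cor:T} transparent. One caution: your intermediate remark that $(\Gamma(A+\lambda)^{-1})^*=(A^*+\bar\lambda)^{-1}\Gamma^*$ ``in the appropriate weak sense'' should be dropped --- $\Gamma$ is bounded only in the graph norm of $A$, so $\Gamma^*$ does not exist as an operator from $E$ into $H$; the resolvent substitution applied to the full vector $h$, which you correctly identify as the robust alternative, is the step that actually carries the argument.
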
 

\begin{proof}
Let $U\in D(\cL)$, $\lambda\in\C^+$ and $\mu\in \C^-$. We test the equality with functions $g$ from the dense set $D(A)$:
\bea
\llangle g, TU\rrangle &=& \llangle Ag, u\rrangle +\llangle \Gamma(A+\lambda)^{-1}) A g, v_{+}(0) \rrangle +\llangle \la \Gamma(A+\lambda)^{-1}g, v_+(0)\rrangle\\
 &=& \llangle Ag, u\rrangle +\llangle \Gamma g, v_{+}(0) \rrangle \\
 &=& \llangle Ag, u+ (\Gamma_*(A^*+\mu)^{-1})^*v_{-}(0) \rrangle - \llangle Ag,  (\Gamma_*(A^*+\mu)^{-1})^*v_{-}(0) \rrangle+\llangle \Gamma g, v_{+}(0) \rrangle.
\eea
Using the Lagrange identity \eqref{eq:Lagrange} for the first term gives
\bea
\llangle g, TU\rrangle &=& \llangle g,A( u+ (\Gamma_*(A^*+\mu)^{-1})^*v_{-}(0)) \rrangle +i\llangle \Gamma g,\Gamma( u+ (\Gamma_*(A^*+\mu)^{-1})^*v_{-}(0)) \rrangle \\
&& - \llangle Ag,  (\Gamma_*(A^*+\mu)^{-1})^*v_{-}(0) \rrangle+\llangle \Gamma g, v_{+}(0) \rrangle \\
&=&\llangle g, T_*U -\bar{\mu} (\Gamma_*(A^*+\mu)^{-1})^*v_{-}(0) \rrangle +i\llangle \Gamma g,\Gamma( u+ (\Gamma_*(A^*+\mu)^{-1})^*v_{-}(0)) \rrangle \\
&& - \llangle Ag,  (\Gamma_*(A^*+\mu)^{-1})^*v_{-}(0) \rrangle+\llangle \Gamma g, v_{+}(0) \rrangle\\
&=&\llangle g, T_*U \rrangle  -\mu \llangle  g, (\Gamma_*(A^*+\mu)^{-1})^* v_{-}(0) \rrangle +\llangle \Gamma g, v_+(0) - i\Gamma( u+ (\Gamma_*(A^*+\mu)^{-1})^*v_{-}(0)) \rrangle \\
&& - \llangle Ag,  (\Gamma_*(A^*+\mu)^{-1})^*v_{-}(0) \rrangle \\
&=&\llangle g, T_*U \rrangle  - \llangle (\Gamma_*(A^*+\mu)^{-1}) (A+\mu) g,  v_{-}(0) \rrangle +\llangle \Gamma g, S^*(-\mu) v_-(0) \rrangle,
\eea 
where the last equality follows from the boundary condition $(I)$ in $D(\cL)$. Therefore, we get 
\bea
\llangle g, TU\rrangle- \llangle g, T_*U \rrangle&=&   \llangle S(-\mu)\Gamma g- (\Gamma_*(A^*+\mu)^{-1}) (A+\mu) g,  v_{-}(0) \rrangle =0,
\eea
by definition of the characteristic function. Hence $TU=T_*U$.
\end{proof}

The previous result immediately shows the following corollary, which justifies the absence of the parameters $\la$ and $\mu$ in our notation of $T$ and $T_*$.
\begin{corollary}\label{cor:T}
$T$ and $T_*$ on $D(\cL)$ are independent of $\la$ and $\mu$.
\end{corollary}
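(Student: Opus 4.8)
The plan is to read the corollary off directly from Lemma \ref{lem:T}, in which the substantive computation has already been carried out. To make the parameter dependence visible I would temporarily write $T^{(\lambda)}$ for the operator defined by \eqref{eq:T}, which involves only $\lambda\in\C^+$ (and not $\mu$), and $T_*^{(\mu)}$ for the operator defined by \eqref{eq:Tstar}, which involves only $\mu\in\C^-$ (and not $\lambda$). A preliminary point worth recording is that the set $D(\cL)$ is itself independent of $\lambda$ and $\mu$ --- this is exactly the content of Lemma \ref{lemma:2.2} together with the lemma preceding it --- and that for a given $U\in D(\cL)$ the data $u$, $v_+(0)$ and $v_-(0)$ appearing in \eqref{eq:T} and \eqref{eq:Tstar} are intrinsic to $U$, carrying no hidden parameter dependence. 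Hence it is meaningful to compare $T^{(\lambda)}U$ and $T_*^{(\mu)}U$ for one fixed $U$ across different choices of $\lambda$ and $\mu$.

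Next I would apply Lemma \ref{lem:T}, whose proof is valid for \emph{every} admissible pair $(\lambda,\mu)\in\C^+\times\C^-$, to obtain
$$T^{(\lambda)}U=T_*^{(\mu)}U\qquad\text{for all }\lambda\in\C^+,\ \mu\in\C^-,\ U\in D(\cL).$$
Fix any $\mu\in\C^-$ and let $\lambda$ range over $\C^+$: the right-hand side is then a fixed vector in $H$ that does not depend on $\lambda$, so $T^{(\lambda)}U$ is independent of $\lambda$. As $T^{(\lambda)}$ does not contain $\mu$ at all, the operator $T$ is independent of both parameters. By the symmetric argument --- fixing $\lambda\in\C^+$ and varying $\mu$ --- the quantity $T_*^{(\mu)}U=T^{(\lambda)}U$ is independent of $\mu$, and since $T_*^{(\mu)}$ does not involve $\lambda$, the operator $T_*$ is likewise independent of both parameters.

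I expect no genuine obstacle here: once Lemma \ref{lem:T} is available the statement is purely formal. The only place demanding a moment's care is the preliminary remark above, namely that the comparison is carried out on a \emph{fixed} domain $D(\cL)$ and that the components $u$, $v_\pm(0)$ of a fixed $U$ do not themselves depend on $\lambda$ or $\mu$; both facts are supplied by the earlier lemmas of this section. I would therefore present the corollary as a two-line deduction from Lemma \ref{lem:T}.
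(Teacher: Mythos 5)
Your proposal is correct and is essentially the paper's own argument: the paper derives the corollary immediately from Lemma \ref{lem:T}, exactly via the observation that $T$ involves only $\lambda$ and $T_*$ only $\mu$, so their equality for every admissible pair $(\lambda,\mu)$ forces both to be parameter-independent. Your preliminary remark that $D(\cL)$ itself does not depend on $\lambda,\mu$ (Lemma \ref{lemma:2.2} and the lemma before it) is a worthwhile point of care that the paper leaves implicit.
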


Finally, having defined the domain on the dilation $D(\cL)$ in Definition \ref{def:L}, we can now give its full action. 

\begin{definition}
We define the operator $\cL$ on $\cH$ with domain $D(\cL)$ by
$$\cL U= \cL\U=\left(\begin{array}{c} iv'_{-} \\ TU \\ iv'_{+} \end{array}\right).$$
\end{definition} 

We see that in the so-called incoming and outgoing channels (the first and last components), see, e.g.~\cite{LP67}, the operator $\cL$ is a simple first order differentiation operator, while on the part in $H$, it is given by $T$ or $T_*$, which act essentially like $A^*$ or $A$ with correction or coupling terms from the channels.

\section{Examples} 
In this section we consider some special cases for which we  determine the operator $\cL$ and its domain more explicitly.

\subsection{The case of bounded imaginary part}
We start with a very simple well-known example. 

\begin{lemma}
Assume $A=\Re A+ i \Im A$ with $\Re A$ a selfadjoint operator, $\Im A$ a bounded non-negative operator and $D(A)=D(A^*)=D(\Re A)$. We can choose $\Gamma=\Gamma_*=\sqrt{2\Im A}$ and $E=E_*=\overline{\Ran\Im A}\subseteq H$.
Then $U=\U\in D(\cL)$ if and only if $u\in D(A)$ and $v_+(0)=v_-(0)+i\Gamma u$.
\end{lemma}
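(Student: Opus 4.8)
The goal is to verify that, in the special case of bounded imaginary part with $\Gamma=\Gamma_*=\sqrt{2\Im A}$, $E=E_*=\overline{\Ran\Im A}$, the general domain $D(\cL)$ from \eqref{def:L} collapses to the simple characterisation: $u\in D(A)$ together with the single boundary relation $v_+(0)=v_-(0)+i\Gamma u$. The strategy is to show that the regularisation parameters $\mu,\lambda$ become unnecessary here, so that the abstract terms $(\Gamma_*(A^*+\mu)^{-1})^*v_-(0)$ and $(\Gamma(A+\lambda)^{-1})^*v_+(0)$ can be handled directly and the boundary conditions $(I)$, $(II)$ simplify.

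\textbf{First step: eliminate the regularisation.} Since $\Im A$ is bounded and $D(A)=D(A^*)=D(\Re A)$, the operator $\Gamma=\sqrt{2\Im A}$ is bounded on all of $H$, so $\Gamma(A+\lambda)^{-1}$ and $\Gamma_*(A^*+\mu)^{-1}$ are bounded with bounded adjoints mapping into $H$ in a controlled way. Crucially, I would argue that $\Ran\left((\Gamma(A+\lambda)^{-1})^*\right)\subseteq D(A^*)=D(A)$: indeed $(\Gamma(A+\lambda)^{-1})^*=(A^*+\bar\lambda)^{-1}\Gamma^*=(A^*+\bar\lambda)^{-1}\Gamma$ (using $\Gamma=\Gamma^*$), whose range is exactly $D(A^*)=D(A)$. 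The same holds for $\Gamma_*$. Consequently, the two domain conditions ``$u+(\Gamma_*(A^*+\mu)^{-1})^*v_-(0)\in D(A)$'' and ``$u+(\Gamma(A+\lambda)^{-1})^*v_+(0)\in D(A^*)$'' are each equivalent to the single requirement $u\in D(A)$, since the added vectors already lie in $D(A)=D(A^*)$. This is the cleanest way to see the domain condition reduce to $u\in D(A)$.

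\textbf{Second step: simplify the boundary condition.} With $u\in D(A)$ established, I would take the limit or direct simplification of condition $(I)$,
\[
v_+(0)=S^*(-\mu)v_-(0)+i\Gamma\bigl(u+(\Gamma_*(A^*+\mu)^{-1})^*v_-(0)\bigr).
\]
By Corollary \ref{cor:T} and Lemma \ref{lemma:2.2} the conditions are independent of $\mu,\lambda$, so I may compute the relevant limit as $|\mu|\to\infty$ along the imaginary axis. In that limit $(\Gamma_*(A^*+\mu)^{-1})^*v_-(0)=(A+\bar\mu)^{-1}\Gamma_* v_-(0)\to 0$ because $\|(A+\bar\mu)^{-1}\|\to 0$, and one checks via the defining relation \eqref{eq:S} that $S^*(-\mu)=S_*(-\bar\mu)\to I_{E_*}=I_E$ (here $E=E_*$). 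Indeed $S(z)\Gamma u=\Gamma_*(A^*-z)^{-1}(A-z)u\to \Gamma_* u=\Gamma u$ as $|z|\to\infty$, giving $S(z)\to I$ on the dense set $\Ran\Gamma$, hence everywhere by uniform boundedness near infinity. Feeding both limits into $(I)$ yields precisely $v_+(0)=v_-(0)+i\Gamma u$, as claimed.

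\textbf{Main obstacle.} The delicate point is justifying the two limiting statements rigorously, especially $S^*(-\mu)\to I$, since $S$ is only defined via its action on $\Ran\Gamma$ and extended by continuity; I must ensure the limit holds in operator norm (or at least strongly on all of $E_*$) and not merely on the dense range. The uniform norm bound \eqref{eq:normestimate} together with $\gamma=|\Im\mu|\,\|(A^*-\mu)^{-1}\|\to 0$ as $|\mu|\to\infty$ controls $\|S(-\mu)\|$ near $1$, which combined with strong convergence on the dense set $\Ran\Gamma$ upgrades to strong convergence on $E$. Alternatively, and perhaps more transparently, since the conditions are genuinely $\mu$-independent by Lemma \ref{lemma:2.2}, I can avoid limits entirely: fix any convenient $\mu$, substitute $(\Gamma_*(A^*+\mu)^{-1})^*=(A+\bar\mu)^{-1}\Gamma$, and use the explicit form of $S(z)$ together with the boundedness of $\Gamma$ to show algebraically that $(I)$ is equivalent to $v_+(0)=v_-(0)+i\Gamma u$. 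I expect this parameter-independence route to be the safest, sidestepping any subtlety in the limit.
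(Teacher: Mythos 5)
Your first step is exactly the paper's: boundedness of $\Gamma=\Gamma_*=\sqrt{2\Im A}$ gives
$(\Gamma(A+\lambda)^{-1})^*v_+(0)=(A^*+\bar{\lambda})^{-1}\Gamma v_+(0)\in D(A^*)$ and
$(\Gamma_*(A^*+\mu)^{-1})^*v_-(0)=(A+\bar{\mu})^{-1}\Gamma v_-(0)\in D(A)$, so both domain conditions in \eqref{def:L} collapse to $u\in D(A)$. Where you genuinely diverge is the boundary condition. The paper works at a \emph{fixed} $\mu\in\C^-$ and proves the algebraic identity
\begin{equation*}
S^*(-\mu)v_-(0)+i\Gamma\bigl(\Gamma_*(A^*+\mu)^{-1}\bigr)^*v_-(0)=v_-(0)
\end{equation*}
on the dense set $\Ran\Gamma_*$ (writing $v_-(0)=\Gamma_*f$, using $S^*(-\mu)=S_*(-\bar{\mu})$, the definition \eqref{Sstar} and the resolvent identity \eqref{GreenStar}), then extends by density; condition $(I)$ is then equivalent to $v_+(0)=v_-(0)+i\Gamma u$ in one stroke, in both directions. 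You instead combine $\mu$-independence with the asymptotics $\mu\to-i\infty$. For this to give the equivalence (and not just necessity of the simple relation) you need more than the statement of Lemma \ref{lemma:2.2} that the conditions cut out the same set: you need the operator identity \eqref{eq:zero} from its proof, which says the right-hand side of $(I)$ is literally the same vector for every $\mu$; then the limit identifies that constant vector as $v_-(0)+i\Gamma u$, and both implications follow. Your argument implicitly uses this and it is available, so the route is sound; what the paper's computation buys is a purely algebraic, limit-free proof, while your version trades the computation for soft asymptotics, in the same spirit as the paper's own treatment of the Schr\"odinger example in Section 6.2.

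Two repairs to the limiting step. First, you establish $S(z)\to I$ strongly on $\Ran\Gamma$, but what enters $(I)$ is $S^*(-\mu)$, and strong convergence does not pass to adjoints in general. Here it does: either note $\|S(-\mu)\|\le 1$ for $-\mu\in\C^+$ (Corollary \ref{corollary:2.2}), so
$\norm{S^*(-\mu)x-x}^2\le 2\norm{x}^2-2\Re\llangle x,S(-\mu)x\rrangle\to 0$;
or, more directly, use $S^*(-\mu)=S_*(-\bar{\mu})$ and compute on $\Ran\Gamma_*$:
$S_*(-\bar{\mu})\Gamma_*f=\Gamma(A+\bar{\mu})^{-1}(A^*+\bar{\mu})f=\Gamma f-2i\,\Gamma(A+\bar{\mu})^{-1}(\Im A)f\to\Gamma f$,
then use contractivity of $S_*$ in $\C^-$ to extend to $E_*$. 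Second, your claim that $\gamma=|\Im\mu|\,\norm{(A^*-\mu)^{-1}}\to 0$ as $|\mu|\to\infty$ is false in general (for $A$ selfadjoint with nonempty spectrum, $\gamma\to 1$); fortunately the bound \eqref{eq:normestimate} is not needed at all, since $-\mu$ lies in $\C^+$, where $S$ is a contraction, which is all the uniform boundedness you require.
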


\begin{proof}
It is easy to check \eqref{eq:Lagrange} and \eqref{eq:LagrangeStar} hold with the given $\Gamma,\Gamma_*$, $E$ and $E_*$.
 In particular, $\Gamma $ and $\Gamma_*$ are bounded, so for any $v_+(0), v_-(0)$ we have
$$ (\Gamma(A+\lambda)^{-1})^*v_{+}(0) = (A^*+\bar{\lambda})^{-1}\Gamma^* v_+(0)\in D(A^*)\quad \hbox{ and }\quad  (\Gamma_*(A^*+\mu)^{-1})^*v_{-}(0) = (A+\bar{\mu})^{-1}\Gamma_*^* v_-(0)\in D(A) .$$
This shows that whenever $U\in D(\cL)$ we have $u\in D(A)$ and so 
\be\label{eq:vplus}
v_+(0)=S^*(-\mu)v_-(0)+i\Gamma (\Gamma_*(A^*+\mu)^{-1})^*v_{-}(0) + i\Gamma u.
\ee
Next let $v_-(0)$ in $\Ran\Gamma_*$ be arbitrary, $f\in D(A^*)$ with $v_-(0)=\Gamma_*f$  and $g=(A^*+\mu)f$. Then using the definition of $S_*$ and  \eqref{GreenStar}, we have
\ben\label{eq:id}
&& S^*(-\mu)v_-(0)+i\Gamma (\Gamma_*(A^*+\mu)^{-1})^*v_{-}(0)\\ \nonumber &=& \left[S_*(-\bar{\mu})+i\Gamma (\Gamma_*(A^*+\mu)^{-1})^*\right]\Gamma_*(A^*+\mu)^{-1}g\\ \nonumber
&=& \Gamma\left[ (A+\bar{\mu})^{-1}(A^*+\bar{\mu})(A^*+\mu)^{-1} - (A+\bar{\mu})^{-1} + (A^*+\mu)^{-1} - (\bar{\mu}-\mu) (A+\bar{\mu})^{-1}\right]g\\ \nonumber
&=& \Gamma f \ = \ v_-(0).
\een
Since this holds for $v_-(0)$ from the dense set $\Ran\Gamma_*$,  it holds on the whole space $E_*$ and \eqref{eq:vplus} reduces to $v_+(0)=v_-(0)+i\Gamma u$.

On the other hand, if $u\in D(A)$  then clearly  the domain inclusions needed in \eqref{def:L} are satisfied and $(I)$ follows from $v_+(0)=v_-(0)+i\Gamma u$ by using \eqref{eq:id}.
\end{proof}

We note that similar considerations work for the case of relatively bounded imaginary part.

\subsection{Dissipative Schr\"{o}dinger operators on the half line}

This section considers the combination of dissipative boundary conditions and potentials for Schr\"{o}dinger operators, providing an example where the imaginary part of the operator is not bounded. It also illustrates the usefulness of being able to consider limits of the parameters $\la$ and $\mu$ in the description of the dilation.

{We now consider the  Schr\"{o}dinger operator $A$ in $L^2(\R^+)$ as discussed in Example \ref{ex:bcq}. We note that the operator $A$ is completely non-selfadjoint provided either $\Im(h)\neq 0$ or $\Im(q)$ is not identically zero. We will prove this in forthcoming work.} Our first aim is to determine the asymptotics of the characteristic function given in \eqref{SchrCharFn}. To this end, we begin with an elementary abstract lemma.

\begin{lem}\label{lem:id}
Let $S_n=\twomat{\alpha_n}{\beta_n}{\gamma_n}{\delta_n}$, $n\in\N$,  be a sequence of 2$\times$2 block operator matrices in $H_1\oplus H_2$ consisting of  bounded operators and such that $S_n$ is a contraction for all $n\in\N$. If $\alpha_n\stackrel{s}{\rightarrow} I_{H_1}$ and  $\delta_n\stackrel{s}{\rightarrow} I_{H_2}$, then  $\beta_n,\gamma_n\stackrel{s}{\rightarrow} 0$. Here  $A_n\stackrel{s}{\rightarrow} A$ denotes strong convergence. 
\end{lem}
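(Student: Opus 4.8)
The plan is to test the contraction inequality $\norm{S_n w}\leq \norm{w}$ on the two families of ``pure'' vectors $\twovec{x}{0}$ with $x\in H_1$ and $\twovec{0}{y}$ with $y\in H_2$, and to read off the off-diagonal convergence from the fact that strong convergence of the diagonal blocks to the identity forces those blocks to saturate the contraction bound, leaving no room for the off-diagonal contribution.

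First I would fix $x\in H_1$ and compute $S_n\twovec{x}{0}=\twovec{\alpha_n x}{\gamma_n x}$. Since $\norm{S_n}\leq 1$, this yields $\norm{\alpha_n x}^2+\norm{\gamma_n x}^2\leq \norm{x}^2$. The key observation is that because $\alpha_n x\to x$ strongly we have $\norm{\alpha_n x}\to \norm{x}$, hence $\norm{\alpha_n x}^2\to\norm{x}^2$. Rearranging the contraction inequality gives $0\leq \norm{\gamma_n x}^2\leq \norm{x}^2-\norm{\alpha_n x}^2\to 0$, so $\gamma_n x\to 0$. As $x\in H_1$ was arbitrary, $\gamma_n\stackrel{s}{\rightarrow}0$. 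Symmetrically, for $y\in H_2$ I would use $S_n\twovec{0}{y}=\twovec{\beta_n y}{\delta_n y}$ to obtain $\norm{\beta_n y}^2+\norm{\delta_n y}^2\leq\norm{y}^2$, and then invoke $\delta_n y\to y$ to get $\norm{\delta_n y}^2\to\norm{y}^2$ and therefore $\norm{\beta_n y}^2\to 0$, i.e.\ $\beta_n\stackrel{s}{\rightarrow}0$.

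There is no serious obstacle here: the whole argument is a squeeze applied to one scalar inequality per test vector. The only point requiring care is the elementary but essential step that strong convergence $\alpha_n x\to x$ (respectively $\delta_n y\to y$) upgrades to norm convergence $\norm{\alpha_n x}\to\norm{x}$; this is immediate from continuity of the norm because the limit is the specific vector $x$, and it is precisely what lets the diagonal term consume the full budget $\norm{x}^2$ and squeeze the off-diagonal term to zero. I would note that mere weak convergence of $\alpha_n$ to the identity would not suffice, so the hypothesis of strong convergence is used exactly at this juncture.
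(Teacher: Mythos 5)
Your proof is correct and follows essentially the same route as the paper's: both exploit the contraction inequality on column vectors, $\norm{\alpha_n x}^2+\norm{\gamma_n x}^2\leq\norm{x}^2$, and use that strong convergence $\alpha_n x\to x$ forces $\norm{\alpha_n x}\to\norm{x}$, squeezing the off-diagonal term to zero. The only difference is presentational: the paper phrases the argument at the operator level (first showing $\alpha_n^*\alpha_n\stackrel{s}{\rightarrow}I_{H_1}$ and then using $\alpha_n^*\alpha_n+\gamma_n^*\gamma_n\leq I_{H_1}$ from $S_n^*S_n\leq I$), whereas you work directly with the scalar inequality per test vector, which is slightly more streamlined but mathematically the same squeeze.
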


\begin{proof}
We prove the statement for $\gamma_n$. The proof for $\beta_n$ is similar.
 For any $x\in H_1$,
$$0\leq\norm{\alpha_n^*\alpha_nx-x}^2=\norm{\alpha_n^*\alpha_nx}^2+\norm{x}^2-2\norm{\alpha_nx}^2 \leq 2\norm{x}^2-2\norm{\alpha_nx}^2\to 0,$$ by assumption on $\alpha_n$. Thus, $\alpha_n^*\alpha_n\stackrel{s}{\rightarrow} I_{H_1}$.
However,
$S_n^*S_n$ is also a contraction, so $0\leq \alpha_n^*\alpha_n+\gamma_n^*\gamma_n\leq I_{H_1}$. This implies $\gamma_n^*\gamma_n\stackrel{s}{\rightarrow} 0$. Then for any $x\in H_1$, 
$$\norm{\gamma_n x}^2=\llangle \gamma_n^*\gamma_n x, x \rrangle \to 0,$$
proving $\gamma_n\stackrel{s}{\rightarrow} 0$.
\end{proof}

Using that $m_*(z)\sim i\sqrt{z}$ as $z\to +i\infty$ (see \cite{Eve72}) and using the resolvent estimate for the anti-dissipative operator $A^*$, we see that the two diagonal terms in the characteristic function in \eqref{SchrCharFn} converge strongly to the identity. By Lemma \ref{lem:id}, the two off diagonal terms must converge strongly to $0$. Therefore, $S(z) \stackrel{s}{\rightarrow} I$ as  $z\to +i\infty$.

To determine the conditions for lying in $D(\cL)$ more explicitly, we next determine $(\Gamma_*(A^*-z)^{-1})^*$. 
 Let $G_*^0$ denote the Green function associated with $A^*$ given by
\be\label{SchrGreen} G_*^0(x,y;z)=\frac{1}{\bar{h}-m_*(z)}
\begin{cases} (\bar{h} \varphi_*(x,z)+\psi_*(x,z)) (m_*(z) \varphi_*(y,z)+\psi_*(y,z)), & x<y, \\ (\bar{h} \varphi_*(y,z)+\psi_*(y,z)) (m_*(z) \varphi_*(x,z)+\psi_*(x,z)), & x>y, \end{cases}  \ee
with 
$\varphi_*$, $\psi_*$ the fundamental solutions from Example \ref{ex:bcq}.
 Thus
\be\label{term1}
 (\Gamma_*(A^*-z)^{-1})^*v_{-}(0) =\sqrt{2\Im h}\  \overline{G_*^0(0,\cdot,z)} v_-(0).
\ee

Let $c\in\C$, $\eta\in L^2(\{\Im q>0\})$ and  $p\in L^2(\R^+)$. Then
\bea
\llangle \left(\Gamma_*(A^*-z)^{-1}\right)^* \twovec{c}{\eta(x)}, p(x)\rrangle &=& \llangle  \twovec{c}{\eta(x)}, \Gamma_*(A^*-z)^{-1} p(x)\rrangle \\
&=& \llangle  \twovec{c}{\eta(x)},\twovec{\sqrt{2\Im h} \int_0^\infty G_*(0,y,z)p(y)\ dy}{\sqrt{2\Im q(x)} \int_0^\infty G_*(x,y,z)p(y)\ dy} \rrangle \\
&=& \llangle c \sqrt{2\Im h} \overline{G_*(0,y,z)} + (A-\bar{z})^{-1}\sqrt{2\Im q(x)}\eta(x), p(y)\rrangle,
\eea
so
$$\left(\Gamma_*(A^*-z)^{-1}\right)^* \twovec{c}{\eta}=c \sqrt{2\Im h} \overline{G_*(0,\cdot,z)} + (A-\bar{z})^{-1}\sqrt{2\Im q}\eta.$$
Therefore, the condition
$u+(\Gamma_*(A^*+\mu)^{-1})^*v_{-}(0)\in D(A)$ becomes 
$$u+ (v_-(0))_1 \sqrt{2\Im h} \overline{G_*(0,y,z)}= u+ (v_-(0))_1 \sqrt{2\Im h}\frac{\overline{m_*(z)\varphi(y)+\psi(y)}}{h-\overline{m_*(z)}} \in D(A),$$  where $(v_-(0))_1$ denotes the first component of $v_-(0)$. Evaluating the expression and its derivative at $0$ gives the condition $$u'(0)-hu(0)=(v_-(0))_1 \sqrt{2\Im h}.$$

Next, we consider the boundary condition $v_+(0)=S^*(-\mu)v_-(0)+i\Gamma\left(u+(\Gamma_*(A^*+\mu)^{-1})^*v_{-}(0)\right)$, as $\mu\to-i\infty$. By  $\hat{\Gamma}$ we denote the extension of $\Gamma_*$ from $D(A^*)$ to $H^1(\R^+)$. By the calculation above we have that 
$$\hat{\Gamma}(\Gamma_*(A^*+\mu)^{-1})^*v_{-}(0) = \hat{\Gamma}\left[ (v_-(0))_1 \sqrt{2\Im h} \overline{G_*(0,\cdot,-\mu)} + (A+\bar{\mu})^{-1}\sqrt{2\Im q}(v_-(0))_2\right].$$
Now, noting that $\overline{G_*(x,y,z)}=G(x,y,\bar{z})$, $G_*(0,0,z)=(h-m_*(z))^{-1}\to 0$, $\norm{G(0,y,-\bar{\mu})}_2\to 0 $ and 
$$\norm{G_*(0,y,z)}_2\leq \underbrace{\norm{G_*^0(0,y,z)}_2}_{\to 0}\left(1+\underbrace{\norm{qG_*(0,y,z)}_2}_{\sim(\Im z)^{-1}}\right),$$
we see that this term will vanish in the limit. Therefore, we obtain
\bea
 D(\cL) & = & \left\{ U=\U \; : \; u\in H^2(\R^+),\; v_\pm\in H^1(\R_{\pm}), \right. \\
 & & \;\;\; u'(0)-hu(0)=\sqrt{2\Im h}\ (v_-(0))_1, \; v_+(0)=v_-(0)+i\hat{\Gamma}u \Bigg\} . \nonumber 
\eea

Finally, we consider the action of $\cL$. We have
\bea
T_*\U&=&(A+\mub)(u+(\Gamma_*(A^*+\mu)^{-1})^*v_{-}(0))-\mub u \\
&=& (A+\mub)\left(u+ \frac{\sqrt{2\Im h} (v_-(0))_1}{h-\overline{m(-\mu)}} \overline{(m(-\mu) \varphi(\cdot,-\mu)+\psi(\cdot,-\mu))}  + (A+\mub)^{-1}\sqrt{2\Im q}\ (v_-(0))_2 \right) - \mub u \\
&=& -u''+qu+\sqrt{2\Im q}\ (v_-(0))_2,
\eea
which shows in particular by explicit calculation that $T_*$ here is independent of 
$\mu$. 
We now have
 $$\cL U= \cL\U=\left(\begin{array}{c} iv'_{-} \\ -u''+qu+\sqrt{2\Im q}\ (v_-(0))_2 \\ iv'_{+} \end{array}\right).$$

\begin{remark}
The dilation property of $\cL$ can easily be checked: Let $\lambda\in\C^+$ and
$$\U=(\cL-\lambda)^{-1}{\left(\begin{array}{c} 0 \\ w \\ 0 \end{array}\right)}.$$ Then $v_+(x)=v_+(0)e^{-i\lambda x}$, so $v_+(0)$ must vanish. Thus $v_-(0)=-i\hat{\Gamma}u$ and so 
\bea
T_*\U&=& -u''+qu+\sqrt{2\Im q}\ (v_-(0))_2 \ =\ -u''+qu +\sqrt{2\Im q} (-i\sqrt{2\Im q}u) \ =\ -u''+\bar{q}u,
\eea
giving $w=-u''+\bar{q}u$.
Therefore, $P_H \left(\cL-\la\right)^{-1}\vert_H= (A^*-\la)^{-1}$
Similarly, one can see that for $\lambda$ in the lower half plane one has $P_H \left(\cL-\la\right)^{-1}\vert_H= (A-\la)^{-1}$.
We will see later in Theorem \ref{Th:dil} that this is a general property of the operator $\cL$ we have constructed.
\end{remark}

\section{Properties of $\cL$}

We first calculate the resolvent of the operator $\cL$.
\begin{lemma}\label{lem:res}
For $\W\in\cH$ and $\lambda_0\in\C^+$, we have
\be\label{eq:res+}
(\cL-\lambda_0)^{-1}\W = \left(\begin{array}{c} -i\Gamma_*(A^*-\lambda_0)^{-1} w e^{-i\lambda_0x} + i S(\lambda_0)\int_0^\infty e^{i\lambda_0(t-x)}g(t)\ dt  -i\int_0^x e^{i\lambda_0(t-x)}f(t)\ dt \\ 
(A^*-\lambda_0)^{-1}w-i\left(\Gamma(A-\overline{\lambda_0})^{-1}\right)^*\int_0^\infty e^{i\lambda_0 t}g(t)\ dt \\ i\int_x^\infty e^{i\lambda_0(t-x)}g(t)\ dt \end{array}\right).
\ee
Similarly, for $\lambda_0\in\C^-$, we have
\be\label{eq:res-}
(\cL-\lambda_0)^{-1}\W = \left(\begin{array}{c} -i\int_{-\infty}^x e^{i\lambda_0(t-x)}f(t)\ dt \\ 
(A-\lambda_0)^{-1} w +i (\Gamma_*(A^*-\overline{\lambda_0})^{-1})^*\int_{-\infty}^0 e^{i\lambda_0t}f(t)\ dt \\ 
i e^{i\lambda_0 x} \Gamma (A-\lambda_0)^{-1}w  - i  S^*(\overline{\lambda_0}) \int_{-\infty}^0 e^{i\lambda_0(t-x)}f(t)\ dt
-i\int_0^x e^{i\lambda_0(t-x)}g(t)\ dt \end{array}\right).
\ee

\end{lemma}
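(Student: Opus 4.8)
The plan is to verify directly that the claimed formula for $(\cL-\lambda_0)^{-1}\W$ lies in $D(\cL)$ and that applying $(\cL-\lambda_0)$ to it returns $\W$. I would treat the two cases $\lambda_0\in\C^+$ and $\lambda_0\in\C^-$ separately, but since they are entirely symmetric (interchanging the roles of $A/A^*$, $\Gamma/\Gamma_*$, and the two channels), I would prove \eqref{eq:res+} in detail and indicate that \eqref{eq:res-} follows by the analogous computation. Denote the right-hand side of \eqref{eq:res+} by $U=(v_-,u,v_+)^{\mathsf T}$ with $w$ replaced appropriately; the three components are designed precisely so that the channel equations $iv_\pm'=\lambda_0 v_\pm + (\text{source})$ hold and so that the middle component solves the resolvent equation for the $H$-part.

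First I would check the two channel equations, which are the easy part. For the outgoing channel, $v_+(x)=i\int_x^\infty e^{i\lambda_0(t-x)}g(t)\,dt$ satisfies $iv_+'(x)=\lambda_0 v_+(x)+g(x)$ by differentiating under the integral sign (the boundary term at $t=x$ supplies the inhomogeneity), and since $\lambda_0\in\C^+$ the exponential $e^{i\lambda_0(t-x)}$ decays as $t\to\infty$, guaranteeing $v_+\in H^1(\R^+,E)$. The incoming channel $v_-$ is handled the same way, checking $iv_-'=\lambda_0 v_- + f$ and $v_-\in H^1(\R^-,E_*)$. I would also record the boundary values $v_+(0)=i\int_0^\infty e^{i\lambda_0 t}g(t)\,dt$ and $v_-(0)=-i\Gamma_*(A^*-\lambda_0)^{-1}w+iS(\lambda_0)\int_0^\infty e^{i\lambda_0 t}g(t)\,dt$, which will be needed to verify the domain conditions.

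Next I would verify the $H$-component. Writing $u=(A^*-\lambda_0)^{-1}w-i(\Gamma(A-\overline{\lambda_0})^{-1})^*\int_0^\infty e^{i\lambda_0 t}g(t)\,dt$, the claim that the middle entry of $(\cL-\lambda_0)U$ equals $w$ amounts to showing $(T-\lambda_0)U=w$, where $T$ is given by \eqref{eq:T}. Here I would use the representation \eqref{eq:T2}, namely $TU=(A^*+\bar\lambda)(u+(\Gamma(A+\lambda)^{-1})^*v_+(0))-\bar\lambda u$, choosing the regularisation parameter $\lambda$ conveniently. The natural choice is to exploit that $\lambda_0\in\C^+$ and take the free parameter so that the correction term $(\Gamma(A+\lambda)^{-1})^*v_+(0)$ matches the second summand in $u$; this is exactly why the formula for $u$ contains the factor $(\Gamma(A-\overline{\lambda_0})^{-1})^*$. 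With that choice, $u+(\Gamma(A+\lambda)^{-1})^*v_+(0)=(A^*-\lambda_0)^{-1}w\in D(A^*)$, and the verification of $(T-\lambda_0)U=w$ reduces to the resolvent identity $(A^*-\lambda_0)(A^*-\lambda_0)^{-1}w=w$ together with bookkeeping of the $\bar\lambda u$ term.

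The main obstacle, and the step requiring genuine care, is checking that $U$ satisfies the domain conditions in \eqref{def:L} — in particular the boundary condition $(I)$ relating $v_+(0)$, $v_-(0)$, and $\Gamma$ applied to the regularised middle component. This is where the \v{S}traus characteristic function $S(\lambda_0)$ in the formula does its work: substituting the computed $v_+(0)$ and $v_-(0)$ into $(I)$ and using the defining relation \eqref{eq:S} of $S$, the Green identity \eqref{Green}, and the identity \eqref{eq:Sgammastar}, I expect the condition to collapse to an algebraic identity. I would organise this by first confirming $u+(\Gamma_*(A^*+\mu)^{-1})^*v_-(0)\in D(A)$ (using the explicit form of $v_-(0)$ and \eqref{Green}), and only then plug everything into $(I)$; by Lemma \ref{lem:5.4} it then suffices to check one of $(I)$, $(II)$. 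The delicate points are the correct matching of the regularisation parameters $\lambda,\mu$ with $\lambda_0,\overline{\lambda_0}$ and the careful use of the adjoint relation $S^*(z)=S_*(\zb)$ from Lemma \ref{lemma:adj}, so that the $S(\lambda_0)$-term in $v_-(0)$ cancels against the $i\Gamma(\cdots)$ term as required.
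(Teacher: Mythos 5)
Your strategy runs in the opposite direction to the paper's: the paper takes an arbitrary $U\in D(\cL)$ with $(\cL-\lambda_0)U=W$, fixes the regularisation parameter to be $-\overline{\lambda_0}$, solves the two channel ODEs (square-integrability of $v_\pm$ forces the stated form), inverts $A-\lambda_0$ (resp.\ $A^*-\lambda_0$) in the middle equation, and reads off the remaining boundary value from condition (I); this yields the formula \emph{and} uniqueness of the preimage in one stroke. Your computations, as far as they go, are correct and use the same key parameter choice: with $\lambda=-\overline{\lambda_0}$ one gets $u+(\Gamma(A+\lambda)^{-1})^*v_+(0)=(A^*-\lambda_0)^{-1}w\in D(A^*)$, condition (II) holds by inspection of the stated boundary values $v_\pm(0)$ (no Green identity gymnastics needed if you check (II) rather than (I)), Lemma \ref{lem:5.4} then supplies the other domain condition and (I), and $(T-\lambda_0)U=w$ follows from \eqref{eq:T2}.

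There is, however, a genuine gap: what your argument establishes is only that the right-hand side of \eqref{eq:res+} is a \emph{right} inverse of $\cL-\lambda_0$, i.e.\ that every $W$ has \emph{some} preimage given by the formula. That does not justify the notation $(\cL-\lambda_0)^{-1}W$: you also need $\cL-\lambda_0$ to be injective, and a right inverse gives no information about injectivity. At this point in the paper neither selfadjointness nor even symmetry of $\cL$ has been proved, so injectivity must be argued separately; the paper's derivation gets it for free, your verification does not. The fix is short: if $(\cL-\lambda_0)U=0$ with $\lambda_0\in\C^+$, then $v_+(x)=v_+(0)e^{-i\lambda_0 x}$ grows on $\R^+$ unless $v_+(0)=0$, so $v_+\equiv 0$; the domain condition then gives $u\in D(A^*)$ and, by \eqref{eq:T2}, the middle equation becomes $(A^*-\lambda_0)u=0$, which forces $u=0$ since $A^*$ is anti-dissipative and $\Im\lambda_0>0$; finally condition (II) yields $v_-(0)=0$ and hence $v_-\equiv 0$. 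The case $\lambda_0\in\C^-$ is symmetric. (Alternatively one may invoke the symmetry of $\cL$, Lemma \ref{lem:symm}, whose proof is independent of the present lemma.) With this addition your proof is complete.
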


\begin{proof}
We prove \eqref{eq:res-}, the proof of \eqref{eq:res+} is similar. Let $\U\in D(\cL)$, $\lambda_0\in\C^-$ and 
$$(\cL-\lambda_0)\U= \left(\begin{array}{c} iv_-'-\lambda_0v_- \\ 
A(u+(\Gamma_*(A^*+\mu)^{-1})^*v_{-}(0))+\bar{\mu}(\Gamma_*(A^*+\mu)^{-1})^*v_{-}(0)-\lambda_0 u\\
iv_+'-\lambda_0v_+
 \end{array}\right)  = \W,$$
where $\mu\in\C^-$ is arbitrary. Choosing $\mu=-\overline{\lambda_0}$, this simplifies to
$$ \left(\begin{array}{c} iv_-'-\lambda_0v_- \\ 
(A-\lambda_0)(u+(\Gamma_*(A^*-\overline{\lambda_0})^{-1})^*v_{-}(0))\\
iv_+'-\lambda_0v_+
 \end{array}\right)  = \W.$$
We can easily solve the first and last equation, taking into account that $\Im\lambda_0<0$, we get that
\be\label{channelsolns}v_-(x)=-i\int_{-\infty}^x e^{i\lambda_0(t-x)}f(t)\ dt\quad\hbox{ and }\quad v_+(x)=v_+(0)e^{i\lambda_0 x}-i\int_0^x e^{i\lambda_0(t-x)}g(t)\ dt.\ee
Solving the second equation for $u$ gives that
$$u=(A-\lambda_0)^{-1} w - (\Gamma_*(A^*-\overline{\lambda_0})^{-1})^*v_{-}(0)=(A-\lambda_0)^{-1} w +i (\Gamma_*(A^*-\overline{\lambda_0})^{-1})^*\int_{-\infty}^0 e^{i\lambda_0t}f(t)\ dt .$$
It remains to determine $v_+(0)$ from
\bea
v_+(0)&=& S^*(\overline{\lambda_0})v_-(0)+i\Gamma\left(u+(\Gamma_*(A^*-\overline{\lambda_0})^{-1})^*v_{-}(0)\right)\\
&=& i\Gamma (A-\lambda_0)^{-1}w - i S^*(\overline{\lambda_0}) \int_{-\infty}^0 e^{i\lambda_0t}f(t)\ dt.
\eea
Inserting this in \eqref{channelsolns} proves the result.
\end{proof}

\begin{lemma}\label{lem:symm}
$\cL$ is symmetric.
\end{lemma}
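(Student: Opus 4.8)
The plan is to show $\langle \cL U, W\rangle_{\cH} = \langle U, \cL W\rangle_{\cH}$ for all $U,W\in D(\cL)$, writing $U=\U$ and $W=\W$ (with channel data $v_\pm$ and $f,g$ respectively). Since $\cH=L^2(\R^-,E_*)\oplus H\oplus L^2(\R^+,E)$ splits as a direct sum, I would compute the inner product componentwise. The two channel terms involve $\langle i v_-', f\rangle_{L^2(\R^-,E_*)}$ and $\langle i v_+', g\rangle_{L^2(\R^+,E)}$, which I would integrate by parts; the middle term involves $\langle TU, w\rangle_H$, which I would handle using the explicit formula for $T$ together with the Lagrange identity \eqref{eq:Lagrange}.

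First I would deal with the channels. Integration by parts gives, for the outgoing channel on $\R^+$,
\[
\langle i v_+', g\rangle - \langle v_+, i g'\rangle = \frac{1}{i}\Bigl[\langle v_+(x), g(x)\rangle_E\Bigr]_{0}^{\infty} = i\,\langle v_+(0), g(0)\rangle_E,
\]
using that $H^1$ functions decay at $\pm\infty$, and similarly the incoming channel on $\R^-$ contributes $-i\,\langle v_-(0), f(0)\rangle_{E_*}$ (the sign flipping because the boundary term is evaluated at the right endpoint $0$ of $\R^-$). So the skew-symmetric part of $\cL$ coming from the channels produces boundary terms at $0$ of the form $i\langle v_+(0),g(0)\rangle_E - i\langle v_-(0),f(0)\rangle_{E_*}$ that must be cancelled by the middle component.

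For the middle term I would use the representation \eqref{eq:T2}, $TU = (A^*+\bar\lambda)(u+(\Gamma(A+\lambda)^{-1})^* v_+(0)) - \bar\lambda u$, choosing the same $\lambda\in\C^+$ for both $U$ and $W$, and the parallel representation of $TW$. Writing $\hat u = u + (\Gamma(A+\lambda)^{-1})^* v_+(0)\in D(A^*)$ and $\hat w = w + (\Gamma(A+\lambda)^{-1})^* g(0)\in D(A^*)$, the quantity $\langle TU, w\rangle - \langle u, TW\rangle$ should reduce, after the $\bar\lambda u$ correction terms combine, to an expression involving $\langle A^*\hat u, \hat w\rangle - \langle \hat u, A^*\hat w\rangle$, which by \eqref{eq:LagrangeStar} equals $-i\langle \Gamma_*\hat u,\Gamma_*\hat w\rangle_{E_*}$, plus cross terms pairing $\Gamma$-data against $v_+(0)$ and $g(0)$. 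The key is that these residual cross terms, together with the identities \eqref{eq:Sgammastar}, Lemma \ref{lem:SStarS}, and the adjoint relation $S^*=S_*(\bar\cdot)$ from Lemma \ref{lemma:adj}, reproduce exactly the boundary terms $i\langle v_+(0),g(0)\rangle_E - i\langle v_-(0),f(0)\rangle_{E_*}$ with the opposite sign, so that everything cancels.

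The main obstacle will be the bookkeeping in this cancellation: the boundary conditions $(I)$ and $(II)$ in \eqref{def:L} must be invoked to rewrite $v_-(0)$ in terms of $v_+(0)$ (and the corresponding $f(0),g(0)$), and one must verify that the operator identities for $S$, $S_*$, $\Gamma$ and $\Gamma_*$ assemble so that the channel boundary contributions precisely match those generated by the Lagrange defect in $H$. I would organize the computation by testing against the dense set $D(A)$ (or $D(A^*)$) as in the proof of Lemma \ref{lem:T}, so that $A^*\hat u$ can be moved onto a smooth test vector and the Lagrange identity applied cleanly; the symmetry $\langle \cL U,W\rangle = \langle U,\cL W\rangle$ then follows by collecting the vanishing of all boundary terms. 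I expect no analytic difficulties beyond the integration by parts; the entire content is the algebraic verification that the characteristic-function identities are exactly what is needed for the channel and $H$ contributions to annihilate each other.
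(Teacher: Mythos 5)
Your proposal is correct in outline and can be completed; the global structure (integrate by parts in the two channels, then cancel the resulting boundary terms at $0$ against the defect of the middle component) is exactly that of the paper, and your ingredient list does close the computation. Where you genuinely differ is in how the middle term is handled. You use the $A^*$-based representation \eqref{eq:T2} for \emph{both} vectors, so the defect $\llangle TU,w\rrangle - \llangle u,TW\rrangle$ produces a Lagrange term $-i\llangle \Gamma_*\hat u,\Gamma_*\hat w\rrangle_{E_*}$ via \eqref{eq:LagrangeStar}; after substituting condition $(II)$ for both vectors this generates, besides the wanted boundary terms, a term in $I_E-S^*(-\bar\lambda)S(-\bar\lambda)$ which must be converted back into resolvent/$\Gamma$ data by Lemma \ref{lem:SStarS} (together with Lemma \ref{lemma:adj} and the definition \eqref{Sstar} of $S_*$, which identifies $\Gamma(A+\lambda)^{-1}(A^*+\lambda)\hat u = S_*(-\lambda)\Gamma_*\hat u$) before it cancels the leftover $(\bar\lambda-\lambda)$-terms. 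The paper avoids all of this machinery with one trick: since $TU=T_*U$ on $D(\cL)$ (Lemma \ref{lem:T}), it writes the action on $U$ through $T_*$ (i.e.\ through $A$, with parameter $\mu=\bar\lambda$) and the action on $\widetilde U$ through $T$ (through $A^*$, with parameter $\lambda$). Then every pairing is between a vector of $D(A)$ and a vector of $D(A^*)$, so the mere definition of the adjoint, $\llangle (A+\lambda)x,y\rrangle = \llangle x,(A^*+\bar\lambda)y\rrangle$, replaces the Lagrange identity, and only conditions $(I)$, $(II)$ and the tautology $\llangle v,Sw\rrangle=\llangle S^*v,w\rrangle$ are needed at the end. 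Your route has the merit of making visible how the characteristic-function identities of Lemma \ref{lem:SStarS} encode the cancellation, at the price of much heavier bookkeeping; the paper's asymmetric choice of representations makes the computation nearly tautological.

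Two small cautions. First, your channel boundary terms carry the opposite sign to the paper's: with the paper's convention (inner products linear in the first argument, conjugate-linear in the second), the outgoing channel contributes $-i\llangle v_+(0),g(0)\rrangle_E$ and the incoming one $+i\llangle v_-(0),f(0)\rrangle_{E_*}$; your signs correspond to the other convention, so if you import the paper's identities (all stated in its convention) you must adjust consistently, or the cancellation will appear to fail by a global conjugation. Second, on your route only condition $(II)$ (applied to both $U$ and $W$) is actually used; condition $(I)$ and identity \eqref{eq:Sgammastar} are not needed.
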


\begin{proof}
Let $U=\U, \widetilde{U}=\Ut\in D(\cL)$. Then
\be\label{eq:symm}
\llangle \cL U, \widetilde{U} \rrangle - \llangle  U, \cL\widetilde{U} \rrangle = i \left( \llangle v_-(0), \widetilde{v_-}(0) \rrangle -\llangle v_+(0), \widetilde{v_+}(0) \rrangle  \right) +\llangle T_*U,\widetilde{u}\rrangle - \llangle u, T\widetilde{U}\rrangle.
\ee
Then, for $\lambda\in\C^+$ we have
\bea
\llangle T_*U,\widetilde{u}\rrangle - \llangle u, T\widetilde{U}\rrangle &=& 
\llangle (A+\lambda) (u+(\Gamma_*(A^*+\overline{\lambda})^{-1})^*v_{-}(0))-\lambda u, \widetilde{u}\rrangle - 
\llangle u, (A^*+\bar{\lambda}) (\widetilde{u}+(\Gamma(A+\lambda)^{-1})^*\widetilde{v_{+}}(0))
-\bar{\lambda}\widetilde{u}\rrangle\\
&=& 
\llangle (A+\lambda) (u+(\Gamma_*(A^*+\overline{\lambda})^{-1})^*v_{-}(0)), \widetilde{u}\rrangle - 
\llangle u, (A^*+\bar{\lambda}) (\widetilde{u}+(\Gamma(A+\lambda)^{-1})^*\widetilde{v_{+}}(0))\rrangle\\
&=& 
\llangle (A+\lambda) (u+(\Gamma_*(A^*+\overline{\lambda})^{-1})^*v_{-}(0)), \widetilde{u}\rrangle \\
&&
+ \llangle (\Gamma_*(A^*+\overline{\lambda})^{-1})^*v_{-}(0), (A^*+\bar{\lambda}) (\widetilde{u}+(\Gamma(A+\lambda)^{-1})^*\widetilde{v_{+}}(0))\rrangle\\
&& - 
\llangle u+(\Gamma_*(A^*+\overline{\lambda})^{-1})^*v_{-}(0), (A^*+\bar{\lambda}) (\widetilde{u}+(\Gamma(A+\lambda)^{-1})^*\widetilde{v_{+}}(0))\rrangle\\
&=& 
 \llangle (\Gamma_*(A^*+\overline{\lambda})^{-1})^*v_{-}(0), (A^*+\bar{\lambda}) (\widetilde{u}+(\Gamma(A+\lambda)^{-1})^*\widetilde{v_{+}}(0))\rrangle\\
&& - 
\llangle (A+{\lambda}) (u+(\Gamma_*(A^*+\overline{\lambda})^{-1})^*v_{-}(0)),  (\Gamma(A+\lambda)^{-1})^*\widetilde{v_{+}}(0)\rrangle\\
&=& 
 \llangle v_{-}(0), \Gamma_* (\widetilde{u}+(\Gamma(A+\lambda)^{-1})^*\widetilde{v_{+}}(0))\rrangle - 
\llangle \Gamma (u+(\Gamma_*(A^*+\overline{\lambda})^{-1})^*v_{-}(0)),  \widetilde{v_{+}}(0)\rrangle.
\eea 
Using the conditions $(I)$ and $(II)$ in \eqref{def:L}, we get 
\bea
\llangle T_*U,\widetilde{u}\rrangle - \llangle u, T\widetilde{U}\rrangle &=& 
 \llangle v_{-}(0), i\widetilde{v_-}(0)-iS(-\bar{\lambda})\widetilde{v_{+}}(0)\rrangle - 
\llangle -iv_+(0)+iS^*(-\bar{\lambda})v_-(0),  \widetilde{v_{+}}(0)\rrangle\\
 &=& 
 \llangle v_{-}(0), i\widetilde{v_-}(0)\rrangle - 
\llangle -iv_+(0),  \widetilde{v_{+}}(0)\rrangle.
\eea 
Together with \eqref{eq:symm}, this proves symmetry of $\cL$.
\end{proof}

Combining the two previous results immediately gives:

\begin{corollary}
$\cL$ is selfadjoint.
\end{corollary}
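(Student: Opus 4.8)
The plan is to combine the two lemmas that have just been established in the two preceding results. The final corollary asserts that $\cL$ is selfadjoint, and the two ingredients are precisely Lemma \ref{lem:res} (computation of the resolvent) and Lemma \ref{lem:symm} (symmetry of $\cL$). Thus the proof is essentially an application of the standard basic criterion for selfadjointness of a symmetric operator: a symmetric operator is selfadjoint if and only if $\Ran(\cL-\lambda_0)=\cH$ for some (equivalently, one) $\lambda_0\in\C^+$ and some $\lambda_0\in\C^-$, or equivalently if $(\cL\pm i)$ are both surjective.

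Concretely, I would argue as follows. By Lemma \ref{lem:symm} the operator $\cL$ is symmetric, hence closable, and its deficiency spaces $\ker(\cL^*-\lambda_0)$ are well defined for $\lambda_0\in\C\setminus\R$. By Lemma \ref{lem:res}, for every $\lambda_0\in\C^+$ and every $\W\in\cH$ the explicit formula \eqref{eq:res+} produces an element $U\in D(\cL)$ with $(\cL-\lambda_0)U=\W$; likewise \eqref{eq:res-} does this for $\lambda_0\in\C^-$. (Strictly, one should note that Lemma \ref{lem:res} computes a candidate for the resolvent, so I would first record that the formula indeed lands in $D(\cL)$ and inverts $\cL-\lambda_0$ — this is implicit in the derivation there, where the expression is obtained by solving $(\cL-\lambda_0)U=\W$ for $U\in D(\cL)$.) Hence $\Ran(\cL-\lambda_0)=\cH$ for all $\lambda_0\in\C^+\cup\C^-$. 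In particular both $\C^+$ and $\C^-$ contain points of the regular field with full range.

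The surjectivity of $\cL-\lambda_0$ forces the corresponding deficiency index to vanish: if $\Ran(\cL-\lambda_0)=\cH$ then $\ker(\cL^*-\overline{\lambda_0})=\Ran(\cL-\lambda_0)^\perp=\{0\}$. Taking one $\lambda_0\in\C^+$ and one in $\C^-$ shows that both deficiency indices $n_\pm=\dim\ker(\cL^*\mp i)$ are zero. By the von Neumann criterion, a symmetric operator with $n_+=n_-=0$ is selfadjoint, so $\cL=\cL^*$. Equivalently, and perhaps more cleanly for the write-up, a symmetric operator whose range is all of $\cH$ for some non-real spectral parameter in each half-plane is automatically selfadjoint; combined with the resolvent bound coming from symmetry (for a symmetric operator $\norm{(\cL-\lambda_0)U}\ge|\Im\lambda_0|\,\norm{U}$), this shows $\lambda_0\in\rho(\cL)$ and the formula of Lemma \ref{lem:res} really is the resolvent.

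I do not anticipate a genuine obstacle here, since all the analytic work has already been done in Lemmas \ref{lem:res} and \ref{lem:symm}; the corollary is a one-line deduction. The only point requiring a little care is the logical status of Lemma \ref{lem:res}: one must be sure that the explicit formula genuinely delivers a preimage in $D(\cL)$ (not merely a formal inverse), so that surjectivity of $\cL-\lambda_0$ is legitimately established. Once that is granted, symmetry plus surjectivity in both half-planes yields selfadjointness immediately, which is exactly what the corollary records.
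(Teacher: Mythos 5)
Your proof is correct and is essentially identical to the paper's: symmetry from Lemma \ref{lem:symm} together with surjectivity of $\cL-\lambda$ for all non-real $\lambda$ from Lemma \ref{lem:res} gives deficiency indices $(0,0)$, hence selfadjointness. The extra care you take about Lemma \ref{lem:res} genuinely producing a preimage in $D(\cL)$ is a reasonable remark but not a departure from the paper's argument.
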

\begin{proof}
By Lemma \ref{lem:symm}, $\cL$ is a symmetric operator, while by Lemma \ref{lem:res} we have that $\Ran (\cL-\lambda)=\cH$ for all non-real $\lambda$, i.e.~$\cL$ is a symmetric operator with deficiency indices $(0,0)$. Hence it is selfadjoint.
\end{proof}

\begin{theorem}\label{Th:dil}
$\cL$ is a minimal selfadjoint dilation of $A$. In particular, letting $P_H:\Hc\to H$ be the projection onto the second component, we have
\be\label{eq:dil} P_H \left(\cL-\la\right)^{-1}\vert_H=\begin{cases} (A-\la)^{-1} & \la\in\C^-, \\ (A^*-\la)^{-1} & \la\in\C^+.\end{cases}\ee
\end{theorem}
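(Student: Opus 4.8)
The plan is to establish the two assertions of Theorem~\ref{Th:dil} separately: first the dilation property~\eqref{eq:dil}, and then minimality. The dilation property is essentially already contained in the resolvent formula of Lemma~\ref{lem:res}. Indeed, applying $P_H$ to~\eqref{eq:res-} for $\lambda_0\in\C^-$ picks out the second component, which reads
$$P_H(\cL-\lambda_0)^{-1}\W = (A-\lambda_0)^{-1}w + i\,(\Gamma_*(A^*-\overline{\lambda_0})^{-1})^*\int_{-\infty}^0 e^{i\lambda_0 t}f(t)\,dt.$$
To verify~\eqref{eq:dil} I would restrict to $\W\in H$, i.e.~set $f=0=g$, so that only $w$ survives; then the integral term vanishes and the second component collapses to $(A-\lambda_0)^{-1}w$, giving $P_H(\cL-\lambda_0)^{-1}\vert_H=(A-\lambda_0)^{-1}$ for $\lambda_0\in\C^-$. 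The same restriction applied to~\eqref{eq:res+} yields $(A^*-\lambda_0)^{-1}$ for $\lambda_0\in\C^+$. This confirms~\eqref{eq:dil}, and since $\cL$ is selfadjoint by the preceding corollary, Proposition~\ref{dilation} identifies $\cL$ as a selfadjoint dilation of $A$.

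\textbf{Minimality.} By the definition following Proposition~\ref{dilation}, I must show that $\cH$ contains no proper reducing subspace of $\cL$ that still contains $H$ and supports a dilation of $A$; equivalently, that the smallest $\cL$-reducing subspace containing $H$ is all of $\cH$. The standard route is to show
$$\cH = \clos\,\Span\left\{(\cL-\lambda)^{-1}h \;:\; h\in H,\ \lambda\in\C\setminus\R\right\}.$$
I would take a vector $W=(f,w,g)^\top\in\cH$ orthogonal to every $(\cL-\lambda)^{-1}h$ for $h\in H$ and both $\lambda\in\C^+$ and $\lambda\in\C^-$, and deduce $W=0$. Using the adjoint of the resolvent (equivalently, testing $\langle(\cL-\overline\lambda)^{-1}W,\,h\rangle=0$ with the second components of~\eqref{eq:res+} and~\eqref{eq:res-}), orthogonality against all $h\in H$ for $\lambda\in\C^+$ forces
$$(A-\overline\lambda)^{-1}w - i\,(\Gamma_*(A^*-\lambda)^{-1})^*\!\int_{-\infty}^0 e^{-i\overline\lambda t}f(t)\,dt = 0$$
for all $\lambda\in\C^+$, and symmetrically for $\lambda\in\C^-$ using the channel-components that involve $\Gamma(A-\lambda)^{-1}w$ and $S(\lambda)\int_0^\infty e^{i\lambda(t-x)}g(t)\,dt$.

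\textbf{The main obstacle} is precisely this orthogonality argument: one must peel apart the three components and exploit the density of $\Ran(\Gamma)$ in $E$ and $\Ran(\Gamma_*)$ in $E_*$ (the defining property from Lemma~\ref{lemma 3.1}) to conclude that $f$, $w$, and $g$ each vanish. The plan is to proceed in stages. First, letting $\lambda\to+i\infty$ in the $H$-component equation and using the resolvent decay $\|(A-\overline\lambda)^{-1}\|\le|\Im\lambda|^{-1}$ together with the behaviour of $S(\lambda)\to I$ (as exploited in Section~6), I expect to isolate and kill the integral transforms of $f$ and $g$ against the exponentials $e^{i\lambda t}$; since these are Laplace-type transforms, their vanishing on a half-plane forces $f\equiv0$ on $\R^-$ and $g\equiv0$ on $\R^+$. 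Second, with $f=g=0$ the remaining condition reduces to $(A-\overline\lambda)^{-1}w=0$ for all $\lambda$, whence $w=0$. The delicate point is justifying that the $\Gamma_*$-term genuinely decouples from the $(A-\overline\lambda)^{-1}w$ term, which is where the density of the ranges of $\Gamma,\Gamma_*$ and the intertwining identities~\eqref{eq:Sgammastar}--\eqref{eq:Stildegammastar} for $S,S_*$ must be brought to bear; handling the channels at $\lambda\in\C^+$ versus $\lambda\in\C^-$ symmetrically, and verifying that no residual subspace survives, is the part requiring the most care.
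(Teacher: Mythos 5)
Your treatment of the dilation property \eqref{eq:dil} is exactly the paper's: set $f=g=0$ in \eqref{eq:res+} and \eqref{eq:res-} and read off the second component. Your minimality setup is also the paper's (show $\cH$ equals the closed span of $(\cL-\lambda)^{-1}(0,h,0)^{\top}$, $h\in H$, $\lambda\notin\R$, via an orthogonality argument). The gap lies in how you propose to process the orthogonality relations. Orthogonality of $W=(f,w,g)^{\top}$ to all $(\cL-\lambda)^{-1}(0,h,0)^{\top}$ says precisely that $P_H(\cL-\lambda_0)^{-1}W=0$ for all $\lambda_0\notin\R$, i.e.\ (for $\lambda_0\in\C^-$, by \eqref{eq:res-}, and modulo sign/conjugation slips in your displayed formula)
\[ (A-\lambda_0)^{-1}w+i\left(\Gamma_*(A^*-\overline{\lambda_0})^{-1}\right)^*\int_{-\infty}^0 e^{i\lambda_0 t}f(t)\,dt=0, \]
with the analogous relation involving $g$ for $\lambda_0\in\C^+$. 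Your plan is to let $\lambda_0\to\infty$ and thereby first ``isolate and kill'' the Laplace transforms of $f,g$, and only afterwards conclude $w=0$. This order cannot be carried out as described: both terms above are $O(|\lambda_0|^{-1})$ (one has $\|(\Gamma_*(A^*-\overline{\lambda_0})^{-1})^*\|\le 2|\Im\lambda_0|^{-1/2}$ by the estimate in the proof of \eqref{eq:normestimate}, and the integral is $O(|\Im\lambda_0|^{-1/2})$ by Cauchy--Schwarz), so the naive limit yields $0=0$, and after multiplying by $\lambda_0$ the resolvent term and the transform term are of the same order and cannot be separated. Moreover, the fact $S(\lambda)\to I$ that you invoke is established in Section 6 only for the Schr\"odinger example, not in the abstract setting, and the identities \eqref{eq:Sgammastar}--\eqref{eq:Stildegammastar} do not supply the decoupling either. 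So the ``delicate point'' you flag is not resolved by the tools you name.

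The paper closes exactly this gap by reversing the order and arguing at the level of the full vector rather than term by term in the equation. Since $\cL$ is selfadjoint, $-\lambda_0(\cL-\lambda_0)^{-1}W\to W$ strongly as $\lambda_0\to\pm i\infty$; applying the bounded projection $P_H$ to the identity $-\lambda_0 P_H(\cL-\lambda_0)^{-1}W=0$ therefore gives $w=0$ at once, with no need to decouple anything. Once $w=0$, the displayed relation collapses to $\left(\Gamma_*(A^*-\overline{\lambda_0})^{-1}\right)^*\hat f(\lambda_0)=0$; since $\Ran\left(\Gamma_*(A^*-\overline{\lambda_0})^{-1}\right)=\Ran\Gamma_*$ is dense in $E_*$, this adjoint is injective, so $\hat f$ vanishes on a half-plane and $f\equiv 0$; symmetrically $g\equiv 0$. (The paper phrases this middle step through the explicit formulae for $T,T_*$ applied to $(\cL-\lambda)^{-1}U$ and the first-order ODE in the channels, but the mechanism is the same: density of $\Ran\Gamma,\Ran\Gamma_*$ plus uniqueness of Laplace transforms.) In short: your skeleton and your first half are correct, but the key step -- killing the $H$-component first via strong resolvent convergence -- is missing, and the order of elimination you propose is the one direction in which the argument does not go through.
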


\begin{proof}
The formula \eqref{eq:dil} follows from \eqref{eq:res+} and  \eqref{eq:res-} by setting $f=g=0$ and considering the second component.

It remains to show minimality of the dilation. We need to show that 
$$\cH=\bigvee_{\lambda\not\in\R} (\cL-\lambda)^{-1} \left(\begin{array}{c} 0 \\ H \\ 0 \end{array}\right),$$
where $\bigvee$ denotes the closed linear span.
Let $$\U\perp\bigvee_{\lambda\not\in\R} (\cL-\lambda)^{-1} \left(\begin{array}{c} 0 \\ H \\ 0 \end{array}\right).$$
As $\cL$ is selfadjoint, $-\lambda (\cL-\lambda)^{-1}$ converges strongly to the identity and so $u=0$. Thus we get that 
$$\Hzero:=  (\cL-\lambda)^{-1} \Uzero \perp \left(\begin{array}{c} 0 \\ H \\ 0 \end{array}\right).$$
Then 
$$\Uzero = (\cL-\lambda) \Hzero,\quad \hbox{in particular}\quad T\Hzero=0=T_*\Hzero,$$
which implies from the definition of $T$ and $T_*$ that
\be\label{eq:star}0 = (A^*+\overline{\lambda_0}) (\Gamma(A+\lambda_0)^{-1})^*h_{+}(0), \quad\hbox{so}\quad (\Gamma(A+\lambda_0)^{-1})^*h_{+}(0)=0 \quad \hbox{for any}\quad \lambda_0\in\C^+
\ee
and
\be\label{eq:starstar}
0=(A+\mub)(\Gamma_*(A^*+\mu)^{-1})^*h_{-}(0),\quad\hbox{so}\quad (\Gamma_*(A^*+\mu)^{-1})^*h_{-}(0)=0 \quad \hbox{for any}\quad \mu\in\C^-.
\ee
From the density of the ranges of $\Gamma:D(A)\to E$ and $\Gamma_*:D(A^*)\to E_*$, this implies $h_+(0)=h_-(0)=0$.

On the other hand, $ih_+'-\lambda h_+=v_+$ implies that 
$$h_+(t)=e^{-i\lambda t}h_+(0)-ie^{-i\lambda t}\int_0^t e^{i\lambda x}v_+(x)\ dx.$$
Since for $\lambda\in\C^+$, the function $e^{-i\lambda t}$ is growing, we get  for these $\lambda$ that $h_+(0)=i\int_0^\infty e^{i\lambda x}v_+(x)\ dx=i\hat{v}_+(\lambda)$. 
Similarly for $\lambda\in\C^-$, we get that $h_-(0)=-i\hat{v}_-(\lambda)$.

Thus the Fourier transforms of $v_+$ and $v_-$ vanish in $\C^+$ and $\C^-$, respectively, showing that $v_+=0=v_-$, as desired.
\end{proof}

We complete this section with a discussion of complete non-selfadjointness. We start with  a lemma showing independence from parameters of developing certain ranges by the resolvent.

\begin{lemma}\label{lemma:cns}
For any $\lambda',\lambda''\in\C^+$ we have that
\ben\nonumber
&&\clos\left[\Span_{\mu\in\C^-}(A-\mu)^{-1} \left(\Ran\left(\left(\Gamma(A+i)^{-1}\right)^*\right)\cup \Ran\left(\left(\Gamma_*(A^*-i)^{-1}\right)^*\right)\right)\right.\\ 
\nonumber
&& \left. \quad +\
\Span_{\lambda\in\C^+}(A^*-\lambda)^{-1} \left(\Ran\left(\left(\Gamma(A+i)^{-1}\right)^*\right)\cup \Ran\left(\left(\Gamma_*(A^*-i)^{-1}\right)^*\right)\right)\right] \\ 
\label{eq:cns}
& = & \clos\left[\Span_{\mu\in\C^-}(A-\mu)^{-1} \left(\Ran\left(\left(\Gamma(A+\lambda')^{-1}\right)^*\right)\cup \Ran\left(\left(\Gamma_*(A^*-\lambda'')^{-1}\right)^*\right)\right)\right.\\ \nonumber
&&\left. \quad +\
\Span_{\lambda\in\C^+}(A^*-\lambda)^{-1} \left(\Ran\left(\left(\Gamma(A+\lambda')^{-1}\right)^*\right)\cup \Ran\left(\left(\Gamma_*(A^*-\lambda'')^{-1}\right)^*\right)\right)\right].
\een
\end{lemma}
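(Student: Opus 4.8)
The plan is to show that the closed linear span on the right-hand side of \eqref{eq:cns}, which I denote $M(\lambda',\lambda'')$, is in fact independent of the choice of $\lambda',\lambda''\in\C^+$; the left-hand side is then simply the special case $\lambda'=\lambda''=i$. Because the two pairs of parameters $(i,i)$ and $(\lambda',\lambda'')$ enter the construction in exactly the same way, it suffices to prove one inclusion, say that every generator of $M(\lambda',\lambda'')$ belongs to $M(i,i)$, and then to interchange the roles of $(i,i)$ and $(\lambda',\lambda'')$ to obtain the reverse inclusion and hence equality.

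The first ingredient is a pair of parameter-change formulae coming from the resolvent identity. From $(A+i)(A+\lambda')^{-1}=I+(i-\lambda')(A+\lambda')^{-1}$, taking adjoints gives
$$\left(\Gamma(A+\lambda')^{-1}\right)^* = \left(\Gamma(A+i)^{-1}\right)^* + \overline{(i-\lambda')}\,(A^*+\overline{\lambda'})^{-1}\left(\Gamma(A+i)^{-1}\right)^*,$$
and likewise, from $(A^*-i)(A^*-\lambda'')^{-1}=I+(\lambda''-i)(A^*-\lambda'')^{-1}$,
$$\left(\Gamma_*(A^*-\lambda'')^{-1}\right)^* = \left(\Gamma_*(A^*-i)^{-1}\right)^* + \overline{(\lambda''-i)}\,(A-\overline{\lambda''})^{-1}\left(\Gamma_*(A^*-i)^{-1}\right)^*.$$
Since $-\overline{\lambda'}\in\C^+$ and $\overline{\lambda''}\in\C^-$, each formula expresses a range vector at a general parameter as a range vector at $i$ plus a single resolvent, based at a point in the correct half-plane, applied to a range vector at $i$. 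Combining this with the strong convergence $-\mu(A-\mu)^{-1}\to I$ as $\mu\to-i\infty$ (and the analogous limit for $A^*$ along $\C^+$), I would first record that every undeveloped range $\Ran\left(\left(\Gamma(A+\alpha)^{-1}\right)^*\right)$ and $\Ran\left(\left(\Gamma_*(A^*-\beta)^{-1}\right)^*\right)$, for arbitrary $\alpha,\beta\in\C^+$, already lies inside $M(i,i)$.

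Next I would substitute these formulae into a generator of $M(\lambda',\lambda'')$, which is always a development resolvent applied to a single range vector. The substitution produces two kinds of terms: a development resolvent applied to a range vector at $i$, which is by definition a generator of $M(i,i)$; and a product of two resolvents applied to a range vector at $i$. When the two resolvents are of the same type, the ordinary resolvent identity rewrites the product as a linear combination of single resolvents at points still lying in $\C^-$ (resp.\ $\C^+$), each applied to a range vector at $i$, hence in $M(i,i)$. When they are of mixed type I would invoke the Green identities \eqref{Green} and \eqref{GreenStar}, extended by Remark \ref{rem:gen}. For example $(A-\mu)^{-1}(A^*-\nu)^{-1}$ with $\mu\in\C^-$, $\nu\in\C^+$ becomes, via \eqref{GreenStar},
$$\frac{1}{\nu-\mu}\left[(A^*-\nu)^{-1}-(A-\mu)^{-1}-i\left(\Gamma_*(A^*-\overline{\mu})^{-1}\right)^*\left(\Gamma_*(A^*-\nu)^{-1}\right)\right],$$
where the first two summands are single resolvents at admissible points, and the last summand, applied to a range vector, lands in $\Ran\left(\left(\Gamma_*(A^*-\overline{\mu})^{-1}\right)^*\right)$ with $\overline{\mu}\in\C^+$, i.e.\ in the \emph{other} range family, which by the previous step sits inside $M(i,i)$. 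The symmetric mixed product arising in the $\Gamma_*$-generators is handled identically with \eqref{Green}, the cross-term now falling into a $\Gamma$-range at an admissible parameter.

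Collecting these contributions shows that every generator of $M(\lambda',\lambda'')$ lies in $M(i,i)$, giving $M(\lambda',\lambda'')\subseteq M(i,i)$, and interchanging the parameters yields equality. I expect the main obstacle to be the bookkeeping of half-planes: one must verify at each application of the resolvent and Green identities that the spectral points produced stay in $\C^-$ for the $(A-\cdot)^{-1}$ factors and in $\C^+$ for the $(A^*-\cdot)^{-1}$ factors, and that every cross-term generated by the Green identities falls into a range already known to lie in $M(i,i)$. The degenerate cases in which two spectral points coincide (vanishing denominators in the resolvent identity) are harmless, since the spans are taken over all admissible parameters: the identities may be used off a discrete exceptional set, and the missing vectors are recovered by continuity in the closure.
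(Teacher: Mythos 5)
Your proposal is correct and follows essentially the same route as the paper's own proof: both express the change of parameter via the Hilbert (resolvent) identity as a resolvent product acting on a range vector at $i$, resolve the mixed products with the Green identities \eqref{Green} and \eqref{GreenStar}, and absorb the resulting cross-terms by using the strong limit $-\mu(A-\mu)^{-1}\to I$ as $\mu\to-i\infty$ to place the undeveloped ranges inside the closed span. The only difference is organizational: you work through all four generator types, the same-type resolvent products and the degenerate-denominator case explicitly, which the paper disposes of with ``can be checked similarly.''
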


\begin{proof}
We will show that the set on the right hand side of \eqref{eq:cns} is contained in the set on the left hand side by considering the difference between two typical terms. 
Consider 
\bea
(A-\mu)^{-1} \left[\left(\Gamma(A+i)^{-1}\right)^*- \left(\Gamma(A+\lambda')^{-1}\right)^*\right]&=& 
(A-\mu)^{-1} \left[\Gamma\left((A+i)^{-1}-(A+\lambda')^{-1}\right)\right]^* \\
& = & (\overline{\lambda'}+i) (A-\mu)^{-1} \left[\Gamma\left((A+i)^{-1}(A+\lambda')^{-1}\right)\right]^*\\
& = & (\overline{\lambda'}+i) (A-\mu)^{-1} (A^*+\overline{\lambda'})^{-1} \left[\Gamma(A+i)^{-1}\right]^*,
\eea
where we have used the Hilbert identity. Next, we use \eqref{GreenStar} to obtain 
\ben \nonumber &&
(A-\mu)^{-1} \left[\left(\Gamma(A+i)^{-1}\right)^*- \left(\Gamma(A+\lambda')^{-1}\right)^*\right]\\ \label{eq:cns2}&&= 
 \frac{\overline{\lambda'}+i}{\overline{\lambda'}+\mu} \left[(A-\mu)^{-1}-(A^*+\overline{\lambda'})^{-1}+i\left(\Gamma_*(A^*-\mub)^{-1}\right)^*\left(\Gamma_*(A^*+\overline{\lambda'})^{-1}\right)   \right]  \left[\Gamma(A+i)^{-1}\right]^*.
\een
Clearly, the first two terms on the right lie in the desired set. For the last term, we note the following two facts: Since $-\mu(A-\mu)^{-1}\to I$ as $\mu\to -i\infty$, the set on the left hand side of \eqref{eq:cns} contains  $\Ran\left(\left(\Gamma_*(A^*-i)^{-1}\right)^*\right)$ and by the Hilbert identity we have
$$(A-\mu)^{-1} \left(\Gamma_*(A^*-i)^{-1}\right)^* = \left(\Gamma_*(A^*-i)^{-1}(A^*-\mub)^{-1}\right)^* = \frac{1}{i-\mub} \left[ \left(\Gamma_*(A^*-i)^{-1}\right)^* - \left(\Gamma_*(A^*-\mub)^{-1}\right)^* \right].$$
Thus $\Ran \left(\Gamma_*(A^*-\mub)^{-1}\right)^*\subseteq \bigvee\left(\Ran\left( (A-\mu)^{-1} \left(\Gamma_*(A^*-i)^{-1}\right)^*\right) \cup \Ran\left(\left(\Gamma_*(A^*-i)^{-1}\right)^*\right)\right)$, showing that all terms on the right hand side of \eqref{eq:cns2} lie in the set on the left hand side of \eqref{eq:cns}. 

All other inclusions of terms on the right of  \eqref{eq:cns} in the set on the left can be checked similarly. The reverse inclusion follows in a similar manner.
\end{proof}

In the following we present a construction of the Langer decomposition from Proposition \ref{prop:Langer} and show its relation to the dilation.

\begin{Theorem}\label{thm:cns}
Let $A$ be a maximal dissipative operator and denote 
{
\ben
H_{cns}&=& \clos\left[\Span_{ \mu\in\C^-}\left\{(A-\mu)^{-1} \left(\Ran\left(\left(\Gamma(A+i)^{-1}\right)^*\right)\bigcup \Ran\left(\left(\Gamma_*(A^*-i)^{-1}\right)^*\right)\right)\right\}\right.\nonumber\\
&&\left. \quad +\  \Span_{\lambda\in\C^+}\left\{ (A^*-\lambda)^{-1} \left(\Ran\left(\left(\Gamma(A+i)^{-1}\right)^*\right)\bigcup \Ran\left(\left(\Gamma_*(A^*-i)^{-1}\right)^*\right)\right)\right\}\right].
\een
}
Let $H_{sa}=H\ominus H_{cns}$.
Then
\begin{enumerate}
	\item\label{i} $H_{cns}$ is a reducing subspace for $A$. 
	\item\label{ii}  $A_{sa}:=A\vert_{H_{sa}}$ is selfadjoint.
	\item\label{iii} The operator $A_{cns}:=A\vert_{H_{cns}}$ is completely non-selfadjoint.
	{	\item\label{iv} The subspace $ \left(\begin{array}{c} 0 \\ H_{sa} \\ 0 \end{array}\right)$ is reducing for the dilation $\cL$ with $\cL$ restricted to $ \left(\begin{array}{c} 0 \\ H_{sa} \\ 0 \end{array}\right)$ given by $A_{sa}$.
	\item\label{v} We have
	$$\bigvee_{\lambda\not\in\R} (\cL-\lambda)^{-1} \left(\begin{array}{c} L^2(\R^-,E_*) \\ 0 \\ L^2(\R^+,E) \end{array}\right)= \cH\ominus \left(\begin{array}{c} 0 \\ H_{sa} \\ 0 \end{array}\right)=\left(\begin{array}{c} L^2(\R^-,E_*) \\ H_{cns} \\ L^2(\R^+,E) \end{array}\right).$$}
	\item\label{vi}  $\cL$ restricted to $\left(\begin{array}{c} L^2(\R^-,E_*) \\ H_{cns} \\ L^2(\R^+,E) \end{array}\right)$ is a minimal selfadjoint dilation of $A_{cns}$.
\end{enumerate}
\end{Theorem}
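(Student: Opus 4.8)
The plan is to reduce parts \ref{i}--\ref{iii} to the Langer decomposition of Proposition \ref{prop:Langer} by identifying the subspace $H_{cns}$ defined in the statement with the completely non-selfadjoint part $H_2$ appearing there. The link is the identity $\frac12 D_T^2 = \left(\Gamma(A+i)^{-1}\right)^*\Gamma(A+i)^{-1}$ (and its $*$-analogue) established in the proof relating $S$ to the Sz.-Nagy--Foia\lfhook{s} function. Writing $B=\Gamma(A+i)^{-1}:H\to E$, the standard fact $\overline{\Ran(B^*B)}=\overline{\Ran B^*}$ gives $\overline{\Ran\Delta}=\overline{\Ran\left(\left(\Gamma(A+i)^{-1}\right)^*\right)}$ and $\overline{\Ran\Delta_*}=\overline{\Ran\left(\left(\Gamma_*(A^*-i)^{-1}\right)^*\right)}$, so that $\mathcal M$ and the union of ranges appearing in $H_{cns}$ span the same closed subspace. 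Developing this common subspace by the resolvents $(A-\mu)^{-1}$, $\mu\in\C^-$, and $(A^*-\lambda)^{-1}$, $\lambda\in\C^+$, and taking the closure then yields $H_{cns}=H_2$. Parts \ref{i}, \ref{ii}, \ref{iii} are then exactly the content of Proposition \ref{prop:Langer}.

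A fact I will use repeatedly is that $\Gamma$ vanishes on $H_{sa}\cap D(A)$ and $\Gamma_*$ on $H_{sa}\cap D(A^*)$. Indeed, for $u\in H_{sa}\cap D(A)$ and any $v\in D(A)$, splitting $v$ along the reducing decomposition $H=H_{sa}\oplus H_{cns}$ and using selfadjointness of $A_{sa}$ shows $\langle Au,v\rangle_H=\langle u,Av\rangle_H$; the Lagrange identity \eqref{eq:Lagrange} then gives $\langle\Gamma u,\Gamma v\rangle_E=0$, and density of $\Ran\Gamma$ forces $\Gamma u=0$ (symmetrically for $\Gamma_*$). For part \ref{iv} I apply Lemma \ref{lem:res} with $f=g=0$ to a vector $(0,w,0)^\top$, $w\in H_{sa}$: the middle entry is $(A-\lambda_0)^{-1}w$ or $(A^*-\lambda_0)^{-1}w$, both equal to $(A_{sa}-\lambda_0)^{-1}w\in H_{sa}$ since $H_{sa}$ reduces $A$, while the channel entries are $\Gamma(A-\lambda_0)^{-1}w$, resp.\ $\Gamma_*(A^*-\lambda_0)^{-1}w$, which vanish by the above. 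Hence $(0,H_{sa},0)^\top$ is invariant under every $(\cL-\lambda_0)^{-1}$, so it reduces the selfadjoint $\cL$, and the restricted resolvent is $(A_{sa}-\lambda_0)^{-1}$, identifying the restriction with $A_{sa}$.

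The heart of the argument is part \ref{v}; its second equality is immediate from $H=H_{sa}\oplus H_{cns}$, so everything rests on computing the orthogonal complement of $N:=\bigvee_{\lambda\notin\R}(\cL-\lambda)^{-1}(L^2(\R^-,E_*)\oplus 0\oplus L^2(\R^+,E))$. A vector $U=(v_-,u,v_+)^\top$ lies in $N^\perp$ precisely when the two channel components of $(\cL-\lambda_0)^{-1}U$ vanish for every $\lambda_0\notin\R$. Feeding $U$ into \eqref{eq:res-} (for $\lambda_0\in\C^-$) and \eqref{eq:res+} (for $\lambda_0\in\C^+$) and reading off the channel entries forces, successively, $v_-=0$, then, evaluating the surviving outgoing entry at $x=0$, the conditions \textbf{(a)} $\Gamma(A-\lambda_0)^{-1}u=0$ for all $\lambda_0\in\C^-$ and $v_+=0$, and finally \textbf{(b)} $\Gamma_*(A^*-\lambda_0)^{-1}u=0$ for all $\lambda_0\in\C^+$. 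It remains to prove that \textbf{(a)} and \textbf{(b)} are equivalent to $u\perp H_{cns}$. The forward implication is the easy half: if $u\in H_{sa}$ then $(A-\lambda_0)^{-1}u\in H_{sa}\cap D(A)$ and $\Gamma$ kills it, giving \textbf{(a)}, and symmetrically \textbf{(b)}. The reverse implication is the genuine obstacle. Testing $u$ against the four families of generators of $H_{cns}$ and moving resolvents onto $u$ produces terms of two shapes. The \emph{like} terms $\Gamma(A+i)^{-1}(A-\bar\lambda)^{-1}u$ and $\Gamma_*(A^*-i)^{-1}(A^*-\bar\mu)^{-1}u$ collapse immediately: one resolvent identity writes each as a multiple of a difference of two quantities of the form \textbf{(a)}, resp.\ \textbf{(b)}, each vanishing. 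The delicate \emph{mixed} terms are $\Gamma(A+i)^{-1}(A^*-\nu)^{-1}u$ with $\nu\in\C^+$ and its mirror image; here I set $y=(A^*-\nu)^{-1}u$, and condition \textbf{(b)} gives $\Gamma_* y=0$, which by the lemma showing $S(z)$ is well-defined on $\Ran\Gamma$ forces $y\in D(A)\cap D(A^*)$ with $Ay=A^*y$ and $\Gamma y=0$. Writing $u=(A-\nu)y$ and $x:=(A+i)^{-1}u$, a short resolvent computation gives $(A+i)^{-1}y=\frac{1}{i+\nu}(y-x)$, and since $\Gamma y=0$ and $\Gamma x=0$ (the latter by \textbf{(a)} with $\lambda_0=-i$), the mixed term vanishes. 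The point worth stressing is that \emph{both} families are needed here: \textbf{(b)} puts $y$ into the symmetric core $\{Ay=A^*y\}$ where $\Gamma$ and $\Gamma_*$ vanish, while \textbf{(a)} controls the extra resolvent separating $(A+i)^{-1}y$ from $y$. With all generators handled, $u\in H_{sa}$; and $(0,H_{sa},0)^\top\subseteq N^\perp$ by part \ref{iv} (the channels lie in the reducing complement of $(0,H_{sa},0)^\top$), so $N^\perp=(0,H_{sa},0)^\top$ and \ref{v} follows.

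For part \ref{vi} I note that $K:=(L^2(\R^-,E_*)\oplus H_{cns}\oplus L^2(\R^+,E))^\top=\cH\ominus(0,H_{sa},0)^\top$ is reducing for $\cL$ by part \ref{iv}, so $\cL|_K$ is selfadjoint. The dilation identity follows by restricting Theorem \ref{Th:dil}: for $w\in H_{cns}$ the middle component of $(\cL-\lambda)^{-1}(0,w,0)^\top$ is $(A-\lambda)^{-1}w=(A_{cns}-\lambda)^{-1}w\in H_{cns}$ for $\lambda\in\C^-$, and $(A_{cns}^*-\lambda)^{-1}w$ for $\lambda\in\C^+$. Minimality is already contained in the proof of \ref{v}: linearity of the resolvent together with Theorem \ref{Th:dil} and part \ref{iv} gives $\cH=(0,H_{sa},0)^\top\vee\bigvee_{\lambda}(\cL-\lambda)^{-1}(0,H_{cns},0)^\top$, and since the second span lies in the reducing subspace $K\perp(0,H_{sa},0)^\top$ it must equal $K$; restricted to $K$ this reads $\bigvee_{\lambda}(\cL|_K-\lambda)^{-1}(0,H_{cns},0)^\top=K$, which is minimality of $\cL|_K$ as a dilation of $A_{cns}$. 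I expect the single hard point of the theorem to be the reverse implication in \ref{v}, and specifically the mixed resolvent terms, whose vanishing is exactly why both families \textbf{(a)} and \textbf{(b)} must be extracted from the orthogonality condition.
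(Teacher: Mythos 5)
Your proposal is correct, but it takes a genuinely different route from the paper at the two substantive points, so a comparison is worthwhile. For parts (1)--(3) the paper is self-contained: it proves invariance of $H_{cns}$ under both families of resolvents directly from the Hilbert identity and \eqref{GreenStar}, obtains selfadjointness of $A_{sa}$ by showing its Cayley transform is unitary (restricting \eqref{Green} and \eqref{GreenStar} to $H_{sa}$), and proves complete non-selfadjointness by showing any reducing $W\subseteq H_{cns}$ on which $A$ is selfadjoint must be orthogonal to $H_{cns}$. You instead identify $H_{cns}$ with the subspace $H_2$ of Proposition \ref{prop:Langer}, using $\Delta=D_T^2=2\left(\Gamma(A+i)^{-1}\right)^*\Gamma(A+i)^{-1}$ (and its $*$-analogue) together with $\overline{\Ran(B^*B)}=\overline{\Ran B^*}$; this identification is sound (both identities are established in the paper's comparison with the Sz.-Nagy--Foias function, and the resolvent development only sees closures), and it is much shorter, but it outsources the substance to the cited classical result, whereas the paper explicitly advertises Theorem \ref{thm:cns} as ``a construction of the Langer decomposition'', i.e.\ an independent derivation that your reduction by design does not supply. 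In part (5) the two proofs agree up to the extraction of the conditions \textbf{(a)} $\Gamma(A-\lambda_0)^{-1}u=0$ for $\lambda_0\in\C^-$ and \textbf{(b)} $\Gamma_*(A^*-\lambda_0)^{-1}u=0$ for $\lambda_0\in\C^+$, and they diverge exactly where you predicted, at the mixed generators: the paper stays at the level of $\cL$, using that the span $N$ is reducing for the selfadjoint $\cL$ to apply iterated resolvents $(\cL-\mu)^{-1}(\cL-\lambda)^{-1}$ to vectors supported in the middle component and reading off the channel entries once more; you stay entirely inside $H$, invoking the $\Gamma_*$ half of the well-definedness lemma (that $\Gamma_*y=0$ forces $y\in D(A)\cap D(A^*)$, $Ay=A^*y$, $\Gamma y=0$ --- which uses that an MDO is closed so $(A^*)^*=A$, a case the paper only sketches as ``similar'') together with the resolvent identity $(A+i)^{-1}y=\frac{1}{i+\nu}(y-x)$ and condition \textbf{(a)} at $\lambda_0=-i$. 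Both are valid; your version is more elementary in that it never re-enters the dilation, while the paper's exploits the structure it has already built and avoids the adjoint-kernel lemma. Your parts (4) and (6) essentially coincide with the paper's arguments, with the minor refinement that your preliminary fact ($\Gamma$ annihilates $H_{sa}\cap D(A)$, deduced from selfadjointness of $A_{sa}$, the Lagrange identity and density of $\Ran\Gamma$) gives a cleaner source for the vanishing of the channel entries than the paper's appeal to $w\perp H_{cns}$ via the Hilbert identity.
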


\begin{remark}
For the case of bounded imaginary part, this result is known and can be found in \cite{Nab81}. 
\end{remark}

\begin{proof}
\begin{enumerate}
	\item We show that $(A-\mu_0)^{-1}H_{cns}\subseteq H_{cns}$ for all $\mu_0\in \C^-$. Similarly, one can show that $(A^*-\lambda_0)^{-1}H_{cns}\subseteq H_{cns}$ for all $\lambda_0\in \C^+$. Together, this shows that $H_{cns}$ is reducing for the resolvent of $A$, which implies it is reducing for $A$.
	
	We consider $w\in H_{cns}$ of the form
	\bea w&=& (A-\mu_1)^{-1} \left(\Gamma(A+i)^{-1}\right)^* h_1 + (A-\mu_2)^{-1}\left(\Gamma_*(A^*-i)^{-1}\right)^*h_2^*\\
	&& +
(A^*-\lambda_2)^{-1} \left(\Gamma(A+i)^{-1}\right)^*h_2 + (A^*-\lambda_1)^{-1} \left(\Gamma_*(A^*-i)^{-1}\right)^*h_1^*=:w_1+w_2+w_3+w_4,
\eea
	where $\mu_1,\mu_2\in\C^-$, $\lambda_1,\lambda_2\in \C^+$, $h_1,h_2\in E$ and $h_1^*,h_2^*\in E_*$. It is sufficient to show $(A-\mu_0)^{-1}w_i\in H_{cns}$ for $i=1,...,4$,  since linear combinations of vectors of this form are dense and the resolvent bounded.
	
	It is immediately clear from the Hilbert identity that $(A-\mu_0)^{-1}w_1, (A-\mu_0)^{-1}w_2\in H_{cns}$. For $(A-\mu_0)^{-1}w_3, (A-\mu_0)^{-1}w_4$, we use \eqref{GreenStar} and  the fact that $-\mu(A-\mu)^{-1}\to I$ as $\mu\to -i\infty$ to prove the inclusion.
	\item Let $u\in H_{sa}$. 
	By Lemma \ref{lemma:cns} and again using that $-\mu(A-\mu)^{-1}\to I$ as $\mu\to -i\infty$, this implies that for any $\mu\in\C^-$ we have
	$$ u \perp \Ran\left(\left(\Gamma_*(A^*+\mu)^{-1}\right)^*\right), \hbox{ or equivalently, } \Gamma_*(A^*+\mu)^{-1}u=0. $$
	By a similar argument, $\Gamma(A+\lambda)^{-1}u=0 $ for any $\lambda\in\C^+$.
	Using \eqref{Green} and \eqref{GreenStar}, respectively, we get that
	\beq
	\left[(A+\lambda)^{-1}-(A^*+\mu)^{-1} +(\lambda-\mu)(A^*+\mu)^{-1}(A+\lambda)^{-1}\right]u =  0
	\eeq
	and
	\beq
	\left[(A+\lambda)^{-1}-(A^*+\mu)^{-1} +(\lambda-\mu)(A+\lambda)^{-1} (A^*+\mu)^{-1}\right] u=0.
	\eeq
	Choosing $\lambda=\mub$, we get 
	\be\label{res1}
	\left[(A+\mub)^{-1}-(A^*+\mu)^{-1} +(\mub-\mu)(A^*+\mu)^{-1}(A+\mub)^{-1}\right]\vert_{H_{sa}} =  0
	\ee
	and
	\be\label{res2}
	\left[(A+\mub)^{-1}-(A^*+\mu)^{-1} +(\mub-\mu)(A+\mub)^{-1} (A^*+\mu)^{-1}\right]\vert_{H_{sa}}=0.
	\ee
	Next, let $T$ be the Cayley transform of  $A_{sa}$ at $\mu$, i.e.~$T=(A_{sa}+\mu)(A_{sa}+\mub)^{-1}= I -(\mub-\mu)(A_{sa}+\mub)^{-1}.$
	Then $(A_{sa}+\mub)^{-1} = (\mu-\mub)^{-1}(T-I)$, $(A_{sa}^*+\mu)^{-1}=(\mub-\mu)^{-1}(T^*-I)$  and \eqref{res1} is equivalent to
	\bea
	0&=& T-I +T^*-I + (T^*-I) (T-I)= T+T^*-2I + T*T -T^*-T+I= T^*T-I,
	\eea
	so $T^*T=I$. Similarly, \eqref{res2} is equivalent to $TT^*=I$, so $T$ is unitary and its inverse Cayley transform $A_{sa}$ is selfadjoint.
	\item Assume $W\subseteq H_{cns}$ is a reducing subspace such that $A\vert_{W}$ is selfadjoint. From \eqref{Green} and \eqref{GreenStar}, we get that
	\be\label{eq:g1}
	\left[(A+i)^{-1}-(A^*-i)^{-1} +2i(A^*-i)^{-1}(A+i)^{-1}\right]\vert_{W} =  -i(\Gamma(A+i)^{-1})^*(\Gamma(A+i)^{-1})\vert_{W}
	\ee
	and
	\be\label{eq:g2}
	\left[(A+i)^{-1}-(A^*-i)^{-1} +2i(A+i)^{-1} (A^*-i)^{-1}\right]\vert_{W} = -i(\Gamma_*(A^*-i)^{-1})^*(\Gamma_*(A^*-i)^{-1})\vert_{W}.
	\ee
	As we are assuming that $A\vert_{W}$ is selfadjoint, the left hand sides of \eqref{eq:g1} and \eqref{eq:g2} vanish. Due to the density of the ranges of $\Gamma$ and $\Gamma_*$, this implies that $\Gamma(A+i)^{-1}\vert_{W}=\Gamma_*(A^*-i)^{-1}\vert_{W}=0$. By the same reasoning, we get that $\Gamma(A+\lambda)^{-1}\vert_{W}=0$ for any $\lambda\in\C^+$ and $\Gamma_*(A^*+\mu)^{-1}\vert_{W}=0$ for any $\mu\in\C^-$. Thus $u\in W$ implies that
	$$u\perp\Ran\left(\Gamma(A+\lambda)^{-1}\right)^*\quad \hbox{ and }\quad u\perp \Ran\left(\Gamma_*(A^*+\mu)^{-1}\right)^*.$$ As $W$ is reducing, this also implies that for any $\lambda'\in\C^+$ 
 $$(A+\lambda')^{-1}u\perp\Ran\left(\Gamma(A+\lambda)^{-1}\right)^*\quad \hbox{ and }\quad (A+\lambda')^{-1} u\perp \Ran\left(\Gamma_*(A^*+\mu)^{-1}\right)^*,$$
 which shows that 
 $$u\perp (A^*+\overline{\lambda'})^{-1} \left(\Ran\left(\Gamma(A+\lambda)^{-1}\right)^* \bigcup  \Ran\left(\Gamma_*(A^*+\mu)^{-1}\right)^*\right).$$
 By the same reasoning for any $\mu'\in\C^-$ we have
 $$u\perp (A+\overline{\mu'})^{-1} \left(\Ran\left(\Gamma(A+\lambda)^{-1}\right)^* \bigcup  \Ran\left(\Gamma_*(A^*+\mu)^{-1}\right)^*\right),$$
 so $u\perp H_{cns}$ and $u=0$.

\item {We show that the subspace is reducing for the resolvent of $\cL$.  Let $w\in H_{sa}$. Since $w\perp H_{cns}$, we know from the Hilbert identity that for any $\lambda\in\C^-$ we have
$\Gamma(A-\lambda)^{-1}w=0=\Gamma_* (A^*-\overline{\lambda})^{-1}w$. Thus from \eqref{eq:res+} and \eqref{eq:res-}, we get 
$$(\cL-\lambda)^{-1} \left(\begin{array}{c} 0 \\ w \\ 0 \end{array}\right) = \left(\begin{array}{c} 0 \\ (A^*-{\lambda})^{-1} w \\ 0 \end{array}\right)\quad\hbox{
and }\quad
(\cL-\bar{\lambda})^{-1} \left(\begin{array}{c} 0 \\ w \\ 0 \end{array}\right) = \left(\begin{array}{c} 0 \\ (A-\overline{\lambda})^{-1} w \\ 0 \end{array}\right).$$
The claim now follows immediately from part \eqref{i}.}

 \item {Suppose 
$$ \left(\begin{array}{c} f \\ w \\ g \end{array}\right) \perp (\cL-\lambda)^{-1} \left(\begin{array}{c} L^2(\R^-,E_*) \\ 0 \\ L^2(\R^+,E) \end{array}\right) \hbox{ for all } \lambda\not\in\R.$$
Since $\cL$ is symmetric, this means that 
$$ (\cL-\lambda)^{-1}  \left(\begin{array}{c} f \\ w \\ g \end{array}\right) \perp \left(\begin{array}{c} L^2(\R^-,E_*) \\ 0 \\ L^2(\R^+,E) \end{array}\right) \hbox{ for all } \lambda\not\in\R.$$
First, let $\lambda\in\C^+$. Then by \eqref{eq:res+}, we see that 
$$ 0 = \int_x^\infty e^{i\lambda (t-x)}g(t)\ dt  = \int_0^\infty e^{i\lambda t}g(t)\ dt,$$
which implies $g\equiv0$.
Similarly, choosing   $\lambda\in\C^-$,  by \eqref{eq:res-}, we see that $f\equiv0$.}

{From the first component in \eqref{eq:res+}, we now see that $\Gamma_*(A^*-\lambda)^{-1} w=0$ and from the third component of \eqref{eq:res-}, we have $\Gamma(A-\overline{\lambda})^{-1} w=0$ for all $\lambda\in\C^+$. Thus 
$$w\perp \Ran\left(\left(\Gamma(A-\overline{\lambda})^{-1}\right)^*\right) \cup \Ran\left(\left(\Gamma_*(A^*-\lambda)^{-1}\right)^*\right).$$}

{Since $\cL$ is selfadjoint, it follows immediately from the Hilbert identity that 
$$\bigvee_{\lambda\not\in\R} (\cL-\lambda)^{-1} \left(\begin{array}{c} L^2(\R^-,E_*) \\ 0 \\ L^2(\R^+,E) \end{array}\right) $$ is a reducing subspace and therefore, for any $\mu,\lambda\not\in\R$ also
$$(\cL-\mu)^{-1}(\cL-\lambda)^{-1} \left(\begin{array}{c} 0 \\ w \\ 0 \end{array}\right)\perp  \left(\begin{array}{c} L^2(\R^-,E_*) \\ 0 \\ L^2(\R^+,E) \end{array}\right) .$$
Now, choosing $\mu,\lambda \in \C^+$, from  \eqref{eq:res-}, there exists $\tilde{g}$ such that 
$$(\cL-\mu)^{-1}(\cL-\lambda)^{-1} \left(\begin{array}{c} 0 \\ w \\ 0 \end{array}\right) = \left(\begin{array}{c} 0 \\ (A-\overline{\lambda})^{-1} w \\ \tilde{g} \end{array}\right)$$
and by repeating the arguments above, we see that $\Gamma_*(A^*-\mu)^{-1} (A-\overline{\lambda})^{-1} w=0$, giving that 
$$w\perp (A^*-\lambda)^{-1} \Ran\left(\left(\Gamma_*(A^*-\mu)^{-1}\right)^*\right).$$
Similarly, choosing $\mu$ and $\lambda$ from appropriate half-planes we see that $w\perp H_{cns}$.
Therefore, we have shown that 
\be\label{eq:Hsaperp}\left(\bigvee_{\lambda\not\in\R} (\cL-\lambda)^{-1} \left(\begin{array}{c} L^2(\R^-,E_*) \\ 0 \\ L^2(\R^+,E) \end{array}\right)\right)^\perp\subseteq \left(\begin{array}{c} 0 \\ H_{sa} \\ 0 \end{array}\right).\ee}

{On the other hand, using part \eqref{iv}, we know that 
$$ (\cL-\lambda)^{-1} \left(\begin{array}{c} 0 \\ H_{sa} \\ 0 \end{array}\right) \subseteq \left(\begin{array}{c} 0 \\ H_{sa} \\ 0 \end{array}\right). $$
Taking orthogonal complements, this gives that
$$\left(\begin{array}{c} L^2(\R^-,E_*) \\ 0 \\ L^2(\R^+,E) \end{array}\right)\subseteq \left(\begin{array}{c} L^2(\R^-,E_*) \\ H_{cns} \\ L^2(\R^+,E) \end{array}\right) \subseteq \left[(\cL-\lambda)^{-1} \left(\begin{array}{c} 0 \\ H_{sa} \\ 0 \end{array}\right)\right]^\perp$$
and so 
$$(\cL-\lambda)^{-1} \left(\begin{array}{c} L^2(\R^-,E_*) \\ 0 \\ L^2(\R^+,E) \end{array}\right) \subseteq \left[\left(\begin{array}{c} 0 \\ H_{sa} \\ 0 \end{array}\right)\right]^\perp.$$
Taking the linear span, this together with \eqref{eq:Hsaperp} gives \eqref{v}.}

\item {Since we have shown \eqref{iv} and $\cL$ is selfadjoint, it is clear that $\cL$ restricted to
$\left(\begin{array}{c} L^2(\R^-,E_*) \\ H_{cns} \\ L^2(\R^+,E) \end{array}\right)$ is a selfadjoint dilation of $A_{cns}$. Since $\cL$ is a minimal dilation of $A$ by Theorem \ref{Th:dil}, we get
\ben
\cH &=&\bigvee_{\lambda\not\in\R} (\cL-\lambda)^{-1}\left[ \left(\begin{array}{c} 0 \\ H_{cns} \\ 0 \end{array}\right)+ \left(\begin{array}{c} 0 \\ H_{sa} \\ 0 \end{array}\right)  \right]\\
&\subseteq & \bigvee_{\lambda\not\in\R} (\cL-\lambda)^{-1} \left(\begin{array}{c} 0 \\ H_{cns} \\ 0 \end{array}\right)+ \left(\begin{array}{c} 0 \\ H_{sa} \\ 0 \end{array}\right), 
\een
using part \eqref{iv}.
This proves that
$$
 \bigvee_{\lambda\not\in\R} (\cL-\lambda)^{-1} \left(\begin{array}{c} 0 \\ H_{cns} \\ 0 \end{array}\right) = \left(\begin{array}{c} L^2(\R^-,E_*) \\ H_{cns} \\ L^2(\R^+,E) \end{array}\right) 
$$
and hence the required minimality.}
\end{enumerate}
\end{proof}

The next result gives several descriptions of a core for $\cL$.

\begin{Theorem}
\begin{enumerate}
	\item The set 
$$\cC:=\left\{ \U\in D(\cL): v_+(0)\in \Ran\Gamma \right\} $$
is a core for $\cL$, i.e.~it 
is dense in $D(\cL)$ in the graph norm.
\item We have the following equivalent descriptions of $\cC$:
\ben\label{cC1}
\cC&=& \left\{ \U\in D(\cL): v_-(0)\in \Ran\Gamma_* \right\}\\
\label{cC2}
&=&  \left\{ \U\in \cH: v_\pm\in H^1, \exists h\in D(A). h-iu\in D(A^*), v_+(0)=\Gamma h, v_-(0)=\Gamma_* (h-iu) \right\}\\
\label{cC3}
&=&  \left\{ \U\in \cH: v_\pm\in H^1, \exists h_*\in D(A^*). h_*+iu\in D(A), v_-(0)=\Gamma_* h_*, v_+(0)=\Gamma (h_*+iu) \right\}
\een
{\item For $\U\in\cC$ we have 
\ben\label{cCL}
\cL\U&=& \left(\begin{array}{c} iv_-' \\ i\left[A^*(h-iu)-Ah\right] \\ iv_+' \end{array}\right),
\een
where $h\in D(A)$ is as in \eqref{cC2}.}
\end{enumerate}
\end{Theorem}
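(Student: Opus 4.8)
The plan rests on one algebraic identity, which I would extract first. Applying the Green identity \eqref{Green} with the special choice $\mu=\bar\lambda$ (admissible since $\lambda\in\C^{+}$ gives $\bar\lambda\in\C^{-}$), writing $h=(A+\lambda)^{-1}f$ and simplifying $f-(\lambda-\bar\lambda)h=(A+\bar\lambda)h$, one obtains
\[
\left(\Gamma(A+\lambda)^{-1}\right)^{*}\Gamma h = ih - i(A^{*}+\bar\lambda)^{-1}(A+\bar\lambda)h,\qquad h\in D(A),\ \lambda\in\C^{+},
\]
with an analogous $\Gamma_*$-version from \eqref{GreenStar}. Already this shows $-ih+\left(\Gamma(A+\lambda)^{-1}\right)^{*}\Gamma h=-i(A^{*}+\bar\lambda)^{-1}(A+\bar\lambda)h\in D(A^{*})$, so the domain inclusions appearing below will be automatic.

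For the equivalent descriptions I would begin with $\Ran\Gamma\leftrightarrow\Ran\Gamma_*$. Condition (II) of \eqref{def:L} reads $v_-(0)=S(-\bar\lambda)v_+(0)-i\Gamma_*w_*$ with $w_*:=u+\left(\Gamma(A+\lambda)^{-1}\right)^{*}v_+(0)\in D(A^{*})$; since $S(-\bar\lambda)$ sends $\Ran\Gamma$ into $\Ran\Gamma_*$ by \eqref{eq:S} and $\Gamma_*w_*\in\Ran\Gamma_*$, we get $v_+(0)\in\Ran\Gamma\Rightarrow v_-(0)\in\Ran\Gamma_*$, and the converse follows symmetrically from condition (I), from $S^{*}(-\mu)=S_*(-\bar\mu)$ (Lemma \ref{lemma:adj}) and \eqref{Sstar}. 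To identify $\cC$ with the set \eqref{cC2}, take $U\in\cC$ and pick any $h\in D(A)$ with $\Gamma h=v_+(0)$ (possible since $v_+(0)\in\Ran\Gamma$); the key identity rewrites $w_*=u+ih-i(A^{*}+\bar\lambda)^{-1}(A+\bar\lambda)h$, whence $h-iu=-iw_*+(A^{*}+\bar\lambda)^{-1}(A+\bar\lambda)h\in D(A^{*})$, and applying $\Gamma_*$ together with \eqref{eq:S} and condition (II) gives $\Gamma_*(h-iu)=S(-\bar\lambda)v_+(0)-i\Gamma_*w_*=v_-(0)$. Conversely, starting from data $h,\ h_*:=h-iu$ the same identity verifies the inclusions and condition (II) of \eqref{def:L}, so (using Lemma \ref{lem:5.4}) the set \eqref{cC2} lies in $\cC$; finally \eqref{cC3} is \eqref{cC2} under the substitution $h_*=h-iu$, $h=h_*+iu$.

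For the action formula, the middle component of $\cL U$ is $TU=A^{*}w_*+\bar\lambda\left(\Gamma(A+\lambda)^{-1}\right)^{*}v_+(0)$ by \eqref{eq:T}. Substituting $h-iu=-iw_*+(A^{*}+\bar\lambda)^{-1}(A+\bar\lambda)h$ and using $A^{*}(A^{*}+\bar\lambda)^{-1}=I-\bar\lambda(A^{*}+\bar\lambda)^{-1}$ gives $i\left[A^{*}(h-iu)-Ah\right]=A^{*}w_*+i\bar\lambda\left[h-(A^{*}+\bar\lambda)^{-1}(A+\bar\lambda)h\right]$; by the key identity the bracket equals $-i\left(\Gamma(A+\lambda)^{-1}\right)^{*}v_+(0)$, so the right-hand side collapses to exactly $TU$. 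As the channel components are $iv_\pm'$ by the definition of $\cL$, this is precisely \eqref{cCL}.

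Finally, for the core property, since $\cL$ is selfadjoint it suffices to approximate each $U\in D(\cL)$ in the graph norm by elements of $\cC$ (a linear subspace of $D(\cL)$). The decisive point is to freeze $w_*=u+\left(\Gamma(A+\lambda)^{-1}\right)^{*}v_+(0)\in D(A^{*})$ and perturb only $v_+(0)$: choosing $c_n\in\Ran\Gamma$ with $c_n\to v_+(0)$ (dense range), setting $u_n=w_*-\left(\Gamma(A+\lambda)^{-1}\right)^{*}c_n$ and replacing $v_\pm$ by $v_\pm$ plus fixed $H^1$ cut-offs carrying the updated boundary values $c_n$ and $S(-\bar\lambda)c_n-i\Gamma_*w_*$, one checks through Lemma \ref{lem:5.4} that $U_n\in D(\cL)$ with $v_+^n(0)=c_n\in\Ran\Gamma$, hence $U_n\in\cC$. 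Boundedness of $\left(\Gamma(A+\lambda)^{-1}\right)^{*}$ and $S(-\bar\lambda)$ gives $U_n\to U$ in $\cH$, and since $TU_n=A^{*}w_*+\bar\lambda\left(\Gamma(A+\lambda)^{-1}\right)^{*}c_n$ has the constant leading term $A^{*}w_*$, also $\cL U_n\to\cL U$. The main obstacle is really the single identity above, together with the observation that freezing $w_*$ (equivalently $h$) is what makes both the reverse inclusion into \eqref{cC2} and the graph-norm convergence succeed; granting it, everything else is bookkeeping with bounded operators.
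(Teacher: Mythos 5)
Your proposal is correct, and parts (2) and (3) follow essentially the paper's own route: your ``key identity'' $(\Gamma(A+\lambda)^{-1})^*\Gamma h = ih - i(A^*+\bar\lambda)^{-1}(A+\bar\lambda)h$ is precisely the specialization of \eqref{Green} at $\mu=\bar\lambda$ that the paper exploits in \eqref{eq:domain} (with $w_+=(A+\lambda)h$), and your verifications of \eqref{cC1}--\eqref{cC3} and of the action formula \eqref{cCL} are the same computations, merely run in the reverse direction for part (3). Where you genuinely diverge is part (1), the core property. The paper proves density by showing that the graph-orthogonal complement $\cC^{\perp_G}$ inside $D(\cL)$ is trivial: it tests against several families of elements of $\cC$, derives the identities \eqref{eq:star1}--\eqref{eq:star3}, and crucially invokes the decomposition of Theorem \ref{thm:cns} to conclude $w\in H_{sa}\cap H_{cns}=\{0\}$, before using density of $\Ran\Gamma$ to kill the channel data. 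You instead give a direct constructive approximation: freeze $w_*=u+(\Gamma(A+\lambda)^{-1})^*v_+(0)\in D(A^*)$, replace $v_+(0)$ by $c_n\in\Ran\Gamma$ with $c_n\to v_+(0)$, adjust $u_n=w_*-(\Gamma(A+\lambda)^{-1})^*c_n$ and the channel functions by $H^1$ cut-offs carrying the boundary values $c_n$ and $S(-\bar\lambda)c_n-i\Gamma_*w_*$, and observe that Lemma \ref{lem:5.4} puts $U_n$ in $\cC$ while $TU_n=A^*w_*+\bar\lambda(\Gamma(A+\lambda)^{-1})^*c_n$ has constant unbounded part, so graph-norm convergence reduces to boundedness of $(\Gamma(A+\lambda)^{-1})^*$ and $S(-\bar\lambda)$. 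I checked this construction and it is sound. Your argument is more elementary and self-contained --- it needs only the density of $\Ran\Gamma$ in $E$, Lemma \ref{lem:5.4}, and boundedness of the operators already established, and in particular it removes the logical dependence of this theorem on Theorem \ref{thm:cns}; what the paper's longer computation buys in exchange is an explicit description of how graph-orthogonality interacts with the subspaces $H_{sa}$ and $H_{cns}$, tying the core statement to the completely non-selfadjoint decomposition.
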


\begin{proof}
(1) Let $\perp_G$ denote orthogonality in the graph norm and $W=\W\in\cC^{\perp_G}\subseteq D(\cL)$. Choosing $U=\U\in D(\cL)$ with $v_-=0$, $u=0$ and $v_+\in H^1_0(\R^+,E)$ arbitrary, the orthogonality condition gives
$$0=\llangle g,v_+\rrangle +\llangle g',v'_+\rrangle.$$
This implies that $g'\in H^1(\R^+,E)$ and that $g-g''=0$ in the sense of distributions, so $g(x)=g(0)e^{-x}$. Similarly, choosing $v_+=0$, $u=0$ and $v_-\in H^1_0(\R^-,E_*)$ arbitrary shows that $f(x)=f(0)e^{x}$.

Next, choose $v_-=0$. Then by $(I)$ in \eqref{def:L} we have $v_+(0)=i\Gamma u$ and the orthogonality condition yields
\bea
0&=& \llangle w,u\rrangle+\llangle TW,T_*U\rrangle+\llangle g,v_+\rrangle +\llangle g',v'_+\rrangle \\
&=& \llangle w,u\rrangle+\llangle (A^*+\bar{\lambda})(w+(\Gamma(A+\lambda)^{-1})^*g(0))-\bar{\lambda}w,Au\rrangle+\llangle g(0),i\Gamma u\rrangle.
\eea
Since $u\in D(A)$ we may set $u=(A+i)^{-1}b$ for some $b\in H$. Then 
\bea
0&=&\llangle (A^*-i)^{-1}w+(A(A+i)^{-1})^*\left[(A^*+\bar{\lambda})(w+(\Gamma(A+\lambda)^{-1})^*g(0))-\bar{\lambda}w\right]-i\left(\Gamma(A+i)^{-1}\right)^*g(0),b\rrangle.
\eea
Since $b\in H$ is arbitrary and choosing $\la=i$, we get 
$$0=(A^*-i)^{-1}w+(A(A+i)^{-1})^*\left[(A^*-i)(w+(\Gamma(A+i)^{-1})^*g(0))+iw\right]-i\left(\Gamma(A+i)^{-1}\right)^*g(0).$$
Next use that 
$$(A(A+i)^{-1})^*=(I-i(A+i)^{-1})^*=I+i(A^*-i)^{-1},$$
to get 
\bea
0&=&(A^*-i)^{-1}w+(A^*-i)(w+(\Gamma(A+i)^{-1})^*g(0))+iw\\
&&+i(A^*-i)^{-1}\left[(A^*-i)(w+(\Gamma(A+i)^{-1})^*g(0))+iw\right]
-i\left(\Gamma(A+i)^{-1}\right)^*g(0)\\
&=& (A^*-i)(w+(\Gamma(A+i)^{-1})^*g(0))+2iw.
\eea
Therefore, applying $(A^*-i)^{-1}$, we get
\be\label{eq:star1}
(I+2i(A^*-i)^{-1})w=-(\Gamma(A+i)^{-1})^*g(0).
\ee

We now choose $U$ with $v_+=0$, $u=(A^*-i)^{-1}c$ and $v_-(0)=-i\Gamma_*u$. Then
\bea
0&=& \llangle w,u\rrangle+\llangle T_*W,TU\rrangle+\llangle f,v_-\rrangle +\llangle f',v'_-\rrangle \\
&=& \llangle w,u\rrangle+\llangle (A+i)(w+(\Gamma_*(A^*-i)^{-1})^*f(0))-iw,A^*u\rrangle+\llangle f(0),-i\Gamma_* u\rrangle.
\eea
A similar calculation to above shows that 
\be\label{eq:star2}
(I-2i(A+i)^{-1})w=-(\Gamma_*(A^*-i)^{-1})^*f(0).
\ee

We now go back to the case when $v_-=0$ and $u=(A+i)^{-1}b$ and write the orthogonality relation using the expression for $T_*$ rather than $T$ for $W$, i.e.~
\bea
0&=& \llangle w,u\rrangle+\llangle T_*W,T_*U\rrangle+\llangle g,v_+\rrangle +\llangle g',v'_+\rrangle \\
&=& \llangle w,u\rrangle+\llangle (A+i)(w+(\Gamma_*(A^*-i)^{-1})^*f(0))-iw,Au\rrangle+\llangle g(0),i\Gamma u\rrangle.
\eea
Using \eqref{eq:star2} and \eqref{eq:star1}, this gives
\bea 0
&=& \llangle w,u\rrangle + \llangle iw, Au\rrangle + \llangle g(0),i\Gamma u\rrangle\\ 
&=& \llangle (A^*-i)^{-1}w +i(I+i(A^*-i)^{-1})w +i(I+2i(A^*-i)^{-1})w , b\rrangle\\
&=& \llangle 2iw-2(A^*-i)^{-1}w,b\rrangle.
\eea
Thus $\left(I+i(A^*-i)^{-1}\right)w=0$, implying $w\in D(A^*)$. Applying $A^*-i$, we then find $w\in\ker(A^*)$. This implies $w\in H_{sa}$ (should we give more detail here relating it to $H_{sa}$ from previous theorem?).

On the other hand, from \eqref{eq:star1}, we now get 
$$-w=(I+2i(A^*-i)^{-1})w=-(\Gamma(A+i)^{-1})^*g(0)\in H_{cns}.$$
Hence, $w\in H_{sa}\cap H_{cns}=\{0\}$.

Equations \eqref{eq:star1} and \eqref{eq:star2} now give
\be\label{eq:star3}
(\Gamma_*(A^*-i)^{-1})^*f(0)=0=(\Gamma(A+i)^{-1})^*g(0).
\ee
Then (I) in \eqref{def:L} with $\mu=-i$ and using \eqref{eq:star3} simply becomes  $g(0)= S^*(i)f(0)$ and for any $h\in D(A)$ we have
\bea
\llangle g(0),\Gamma h\rrangle &=& \llangle f(0), S(i)\Gamma h\rrangle \ =\  \llangle f(0), \Gamma_*(A^*-i)^{-1}(A-i) h\rrangle \\
&=& \llangle (\Gamma_*(A^*-i)^{-1})^* f(0), (A-i) h\rrangle =0,
\eea
where we have again used \eqref{eq:star3}. Hence, $g(0)$ is orthogonal to $\Ran\Gamma$, which is dense in $E$, so $g(0)=0$. From $(II)$ in \eqref{def:L} we get $f(0)=0$, which completes the proof of the core property.

(2) If $v_+(0)\in\Ran\Gamma$, then from $(II)$ in \eqref{def:L}, we see that  $v_-(0)\in\Ran\Gamma_*$. The converse follows from condition $(I)$ in $D(\cL)$. Thus \eqref{cC1} holds.

To show \eqref{cC2}, first let $U=\U\in \cC$ and let 
 $w_+\in H$ be such that $v_+(0)=\Gamma (A+\la)^{-1} w_+$ for some $\la\in\C^+$.
Using \eqref{Green}, we have
\ben \label{eq:domain}
u+(\Gamma(A+\lambda)^{-1})^*v_{+}(0)&=& u+(\Gamma(A+\lambda)^{-1})^*\Gamma (A+\la)^{-1} w_+\\ \nonumber
&=& u+i\left((A+\la)^{-1}w_+-(A^*+\bar{\la})^{-1}w_++(\la-\bar{\la})(A^*+\bar{\la})^{-1}(A+\la)^{-1}w_+\right),
\een  
Thus, from the domain condition in \eqref{def:L}, we get that $u+i(A+\la)^{-1} w_+\in D(A^*)$. Set $h=(A+\la)^{-1} w_+\in D(A)$. Clearly, $h-iu\in D(A^*)$ and $v_+(0)=\Gamma h$. It remains to show that $v_-(0)=\Gamma_* (h-iu)$.
By the previous calculation, condition $(II)$ in \eqref{def:L} now becomes
\ben\label{eq:II}
v_-(0)&=& S(-\bar{\lambda})\Gamma h-i\Gamma_*\left( u+(\Gamma(A+\lambda)^{-1})^*\Gamma (A+\la)^{-1} w_+\right)\\ \nonumber
&=&S(-\bar{\lambda})\Gamma h +\Gamma_* \left[ -iu+ h-(A^*+\bar{\la})^{-1}w_++(\la-\bar{\la})(A^*+\bar{\la})^{-1}(A+\la)^{-1}w_+\right]\\ \nonumber
&=& \Gamma_* (h-iu),
\een
using the definition of $S$, \eqref{eq:S}.

On the other hand, if $U=\U$ lies in the set on the r.h.s.~of \eqref{cC2}, then clearly $v_-(0)\in \Ran\Gamma_*$ and it remains to check that $u+(\Gamma(A+\lambda)^{-1})^*v_{+}(0)\in D(A^*)$ and $(II)$ holds. Setting $v_+(0)=\Gamma h = \Gamma (A+\la)^{-1} w_+$ this follows by the same calculations as in \eqref{eq:domain} and \eqref{eq:II}. This proves \eqref{cC2}.

Finally, \eqref{cC3} follows by setting  $h_*=h-iu$.

{
(3) Let $\U\in\cC$. Then $v_+(0)=\Gamma h = \Gamma (A+\lambda)^{-1}v$ with $h$ as in \eqref{cC2} and some $v\in H$. Therefore, using \eqref{eq:T2} we have
\bea
T\U &=& \left(A^*+\mu\right)(u+(\Gamma(A+\bar{\mu})^{-1})^*v_{+}(0))-\mu u \\
&=& \left(A^*+\mu\right)(u+(\Gamma(A+\bar{\mu})^{-1})^*\Gamma (A+\lambda)^{-1}v)-\mu u.
\eea
Using \eqref{Green}, this gives
\bea
T\U &=& \left(A^*+\mu\right)(u+i ((A+\lambda)^{-1}-(A^*+\mu)^{-1} +(\lambda-\mu)(A^*+\mu)^{-1}(A+\lambda)^{-1}     )v)-\mu u\\
    &=& \left(A^*+\mu\right)(u+ih)  -iv  + i (\lambda-\mu)(A+\lambda)^{-1} v)-\mu u\\
		&=& \left(A^*+\mu\right)(u+ih)  -i(A+\lambda) h   + i (\lambda-\mu)h)-\mu u\\
		&=& i A^*(h-iu) - i Ah.
\eea
The statement now follows from the definition on $\cL$.
}
\end{proof}

\section{Discussion}

\subsection{Advantages of our Construction}
{
We compare the construction of the operators $\Gamma, \Gamma_*$ in our model to having to determine the square root of operators in other models. We consider the case when the imaginary part has finite rank: $A=\Re A +iV$ with $V$ of finite rank. Then we need to determine $\Gamma$ so that 
$$2\llangle Vu,v\rrangle=\llangle\Gamma u,\Gamma v\rrangle.$$
$V$ can be represented by a positive Hermitian matrix. Using the Cholesky decomposition, we can write $2V=\Gamma^*\Gamma$ for an upper triangular matrix $\Gamma$ with non-negative diagonal entries. Therefore, our method requires calculating the Cholesky decomposition of the matrix rather than its square root.}

\subsection{Comparison to the Kudryashov/Ryzhov model}
{Based on the work of Kudryashov, in \cite{Ryz08}, Ryzhov discusses two selfadjoint dilations (which are then shown to coincide) of a dissipative operator $A$. These are constructed using the Sz.-Nagy-Foias functional model involving square roots, as discussed at the end of Section \ref{Straus}.}

{We show that for the special choice of $\lambda=i$, our model can be recovered from the results in \cite{Ryz08}. However, the method will not reproduce our explicit formulae, as transformations that use square roots of operators are involved.}

{Let $A$ be a maximal dissipative operator, $T$ its Cayley transform \eqref{Cayley} and define $D_T$ and $D_{T^*}$ as in \eqref{DT} and \eqref{DTstar}. Set {$\widetilde{E}=\overline{\Ran D_T}\subseteq H$, $\widetilde{E}_*=\overline{\Ran D_{T^*}}\subseteq H$,} $Q=D_T/\sqrt{2}$, $Q_*=D_{T^*}/\sqrt{2}$, let $\Gammat=Q(A+i):D(A)\to H$, $\Gammat_*=Q_*(A^*-i):D(A^*)\to H$. The selfadjoint dilation $\cA$ of $A$ in \cite{Ryz08} is then given by
\begin{eqnarray}\label{def:A}
 D(\cA) & = & \left\{ \Hvec \; : \; h_0\in H,\; h_+\in H^1(\R^+,\widetilde{E}),\; h_-\in H^1(\R^-,\widetilde{E}_*), \; \right. \\
 & &\left. \;\;\; h_0+Q_*h_{-}(0)\in D(A), 
   \;\;\; h_+(0)=T^*h_-(0)+i\Gammat\left(h_0+Q_*h_{-}(0)\right) \nonumber  \right\}
\end{eqnarray}
acting in the space $L^2(\R^-,\widetilde{E}_*)\oplus H\oplus L^2(\R^+,\widetilde{E})$ with 
$$\cA\Hvec= \left(\begin{array}{c} ih'_{-} \\ -ih_0+(A+i) (h_0+Q_*h_{-}(0))\\ ih'_{+} \end{array}\right) \quad \hbox{for} \quad \Hvec\in D(\cA).$$}

{By Lemma \ref{lem:unique}, there exist unitary operators $U:\widetilde{E}\to E$ and $U_*:\widetilde{E}_*\to E_*$  such that $\Gamma=U\Gammat$ and $\Gamma_*=U_*\Gammat_*$. Here, $E,E_*,\Gamma$ and $\Gamma_*$ are as in Lemma \ref{lemma 3.1}. }

{
\begin{lem}
We have that 
$$ \threemat{U_*}{0}{0}{0}{I}{0}{0}{0}{U} \cA =\cL \threemat{U_*}{0}{0}{0}{I}{0}{0}{0}{U} .$$
\end{lem}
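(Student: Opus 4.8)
The plan is to exhibit the block-diagonal unitary $\Phi:=\threemat{U_*}{0}{0}{0}{I}{0}{0}{0}{U}$ (where $U_*$ and $U$ act pointwise on the channel functions, and map the space $L^2(\R^-,\widetilde E_*)\oplus H\oplus L^2(\R^+,\widetilde E)$ of $\cA$ onto $\cH$) and to show that it maps $D(\cA)$ bijectively onto $D(\cL)$ and intertwines the two actions. Throughout I would exploit the freedom, granted by Lemma \ref{lemma:2.2} and Corollary \ref{cor:T}, to describe $D(\cL)$ and $T=T_*$ using the regularisation parameters $\mu=-i$ and $\lambda=i$; moreover, by Lemma \ref{lem:5.4} it is enough to impose on $D(\cL)$ only the single domain condition $u+(\Gamma_*(A^*-i)^{-1})^*v_-(0)\in D(A)$ together with condition $(I)$, since these force the remaining conditions. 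The first bookkeeping step is to record, from $\Gamma=U\Gammat=UQ(A+i)$ and $\Gamma_*=U_*\Gammat_*=U_*Q_*(A^*-i)$, the identities $\Gamma(A+i)^{-1}=UQ$, $\Gamma_*(A^*-i)^{-1}=U_*Q_*$ and hence $(\Gamma_*(A^*-i)^{-1})^*=Q_*U_*^*$, where $Q,Q_*$ are read as the non-negative self-adjoint operators $D_T/\sqrt2,\ D_{T^*}/\sqrt2$ on $H$.

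With these in hand, the correspondence of domains and of the $H$-component of the action is essentially a substitution. Setting $u=h_0$ and $v_-(0)=U_*h_-(0)$, the relation $(\Gamma_*(A^*-i)^{-1})^*v_-(0)=Q_*U_*^*U_*h_-(0)=Q_*h_-(0)$ shows that the $\cL$-domain condition $u+(\Gamma_*(A^*-i)^{-1})^*v_-(0)\in D(A)$ is literally Ryzhov's condition $h_0+Q_*h_-(0)\in D(A)$. For the action I would use the form \eqref{eq:Tstar2}, which with $\mu=-i$ gives $T_*U=(A+i)(u+(\Gamma_*(A^*-i)^{-1})^*v_-(0))-iu=(A+i)(h_0+Q_*h_-(0))-ih_0$, exactly the middle component of $\cA$. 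The two channel components transform trivially, since $U_*$ and $U$ are constant in $x$ and therefore commute with $i\,d/dx$; thus $\Phi\cA$ and $\cL\Phi$ agree on the first and third components and on $H$, conditional on the boundary conditions matching.

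The crux, and the step I expect to be the main obstacle, is the compatibility of the boundary conditions, which reduces to the operator identity
\[ U\,T^*\big|_{\widetilde E_*}=S^*(i)\,U_* \quad\text{on } \widetilde E_*. \]
Granting this, applying $U$ to Ryzhov's boundary condition $h_+(0)=T^*h_-(0)+i\Gammat(h_0+Q_*h_-(0))$ turns it, via $U\Gammat=\Gamma$, $v_+(0)=Uh_+(0)$ and the substitutions above, into precisely condition $(I)$, namely $v_+(0)=S^*(i)v_-(0)+i\Gamma(u+(\Gamma_*(A^*-i)^{-1})^*v_-(0))$. To prove the identity I would argue on the dense set $\Ran Q_*\subseteq\widetilde E_*$: for $y=Q_*g$ one has, using $D_TT^*=T^*D_{T^*}$ (the adjoint of $TD_T=D_{T^*}T$) in the form $T^*Q_*=QT^*$, that $UT^*y=UQT^*g=\Gamma(A+i)^{-1}T^*g$. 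On the other hand $S^*(i)=S_*(-i)$ by Lemma \ref{lemma:adj}, so setting $w=(A^*-i)^{-1}g$, the definition \eqref{Sstar} of $S_*$ together with $T^*=(A^*+i)(A^*-i)^{-1}$ gives $S^*(i)U_*y=S_*(-i)\Gamma_*(A^*-i)^{-1}g=\Gamma(A+i)^{-1}(A^*+i)(A^*-i)^{-1}g=\Gamma(A+i)^{-1}T^*g$, so the two sides coincide.

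Finally, since every step above is an equivalence and $\Phi$ is unitary, I would conclude that $\Phi$ restricts to a bijection $D(\cA)\to D(\cL)$ and that $\Phi\cA=\cL\Phi$ on that domain. The only genuinely non-routine ingredient is the identity $U\,T^*|_{\widetilde E_*}=S^*(i)\,U_*$, which ties the compression $T^*$ of the Cayley transform to the characteristic function at the distinguished point $z=i$; the rest is substitution and the parameter-independence already established in Section \ref{Straus} and Section 5.
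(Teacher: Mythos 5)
Your proposal is correct and follows essentially the same route as the paper: identify $(\Gamma_*(A^*-i)^{-1})^*=Q_*U_*^*$ to match the domain conditions, use \eqref{eq:Tstar2} with $\mu=-i$ (justified by Lemma \ref{lem:T} and Corollary \ref{cor:T}) to match the middle components, and reduce the boundary conditions to the intertwining of the Cayley transform with the characteristic function at $z=i$. The only cosmetic difference is that you verify the adjoint form $UT^*=S^*(i)U_*$ on the dense set $\Ran Q_*\subseteq\widetilde{E}_*$ using the definition \eqref{Sstar} of $S_*$, Lemma \ref{lemma:adj} and $T^*Q_*=QT^*$, whereas the paper checks the equivalent identity $S(i)U=U_*T$ on $\Ran\Gammat$ directly from \eqref{eq:S} and $Q_*T=TQ$; the two computations are adjoints of one another.
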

}
\begin{proof}{
We first show that $h_0+Q_*h_{-}(0)\in D(A)$ if and only if $u+(\Gamma_*(A^*-i)^{-1})^*v_{-}(0)\in D(A)$, where $u=h_0$ and $v_-(0)=U_*h_-(0)$. Recalling that $Q_*$ is selfadjoint, we get
\ben\label{eq:Kudryashov}
u+(\Gamma_*(A^*+\mu)^{-1})^*v_{-}(0) &=& h_0+ (U_*\Gammat_*(A^*-i)^{-1})^* U_*h_-(0) \nonumber\\
&=& h_0+ (U_*Q_*)^* U_*h_-(0)\ = \ h_0+Q_*h_{-}(0).
\een
Next we show that $h_+(0)=T^*h_-(0)+i\Gammat\left(h_0+Q_*h_{-}(0)\right) $ is equivalent to condition (I) in \eqref{def:L}.By \eqref{eq:Kudryashov}, condition (I) is equivalent to
$$Uh_+(0) = S^*(i)U_*h_-(0)+iU\Gammat(h_0+Q_*h_{-}(0)).$$
We therefore need to show that $U^*S^*(i)U_*=T^*$ or, equivalently, $S(i)=U_*TU^*$.
By \eqref{eq:S}, on $D(A)$ we have 
\bea
S(i)U\Gammat &=& S(i)\Gamma \ =\ \Gamma_*(A^*-i)^{-1}(A-i)\ =\ U_*\Gammat_*(A^*-i)^{-1}(A-i)\ =\ U_*Q_*(A-i)\\
&=& U_*Q_*(A-i)(A+i)^{-1}(A+i)\ =\ U_*Q_*T(A+i)\ =\ U_* TQ(A+i)\ =\  U_*T\Gammat,
\eea
where we have used that $Q_*T=TQ$. Thus on the dense set $\Ran\Gammat$, we have $S(i)U=U_*T$, as required.}

{
It remains to show that the action of the operators coincide. Clearly, the action on the incoming and outgoing channels coincide. That the action on the middle component coincides follows from \eqref{eq:Kudryashov} together with \eqref{eq:Tstar2}, Lemma \ref{lem:T} and Corollary \ref{cor:T}.}
\end{proof}

\subsection{{Connection of the $M$-function to characteristic function in a the case of a symmetric minimal operator} }

{In \cite{Ryz07}, Ryzhov develops a functional model for certain non-selfadjoint extensions of a symmetric operator with equal deficiency indices by using the classical boundary triple framework. We now compare the $M$-function $M(\lambda)$ arising in the boundary triple framework to our characteristic function $S(\lambda)$ in the case of an underlying symmetric operator. Related results and connections to scattering theory can be found in \cite{BMN10,BMN17}. We stress that in our construction neither symmetry of the underlying operator nor equal deficiency indices are required.
}

{Let $L$ be a symmetric operator. We construct an associated boundary triple using the von Neumann formula: $D(L^*)=D(L)\dotplus N_+\dotplus N_- $, where $N_\pm=\ker(L^*\mp i)$. Let $f=f_0+f_i+f_{-i}$ and $g=g_0+g_i+g_{-i}$ lie in $D(L^*)$ and be decomposed according to the von Neumann formula. Then 
$$\llangle L^*f,g\rrangle -\llangle f, L^* g\rrangle = 2i\left(\llangle f_i,g_i\rrangle-\llangle f_{-i},g_{-i}\rrangle\right)$$
and we get a boundary triple by choosing $\Gamma_1 f=\sqrt{2}f_i, \Gamma_0 f=\sqrt{2}f_{-i}, \Gammat_0 g=-i\sqrt{2}g_i$ and $\Gammat_1 g=-i\sqrt{2}g_{-i}$. In particular, we have
\be
\llangle L^*f,g\rrangle -\llangle f, L^* g\rrangle= i\llangle \Gamma_1 f,\Gamma_1 g\rrangle_{N_+} - i \llangle\Gamma_0 f, \Gamma_0 g\rrangle_{N_-}.
\ee
Moreover, for $B:N_-\to N_+$, let $L_B=L^*\vert_{D(L_B)}$, where $D(L_B)=\ker(\Gamma_1-B\Gamma_0)$. Then for $f,g\in D(L_B)$,
\be
\llangle L_Bf,g\rrangle -\llangle f, L_B g\rrangle= i\llangle B\Gamma_0 f,B\Gamma_0 g\rrangle_{N_+} - i \llangle\Gamma_0 f, \Gamma_0 g\rrangle_{N_-},
\ee
so $L_B$ is dissipative if and only if $B^*B\geq I_{N_-}$. For $f,g\in D(L_B^*)=\ker(\Gammat_1-B^*\Gammat_0)$, we have 
\be
\llangle L_B^*f,g\rrangle -\llangle f, L_B^* g\rrangle= i\llangle \Gammat_0 f,\Gammat_0 g\rrangle_{N_+} - i \llangle B^*\Gammat_0 f, B^*\Gammat_0 g\rrangle_{N_-},
\ee
so $L_B^*$ is anti-dissipative if and only if $BB^*\geq I_{N_+}$.
Assuming $L_B$ is maximally dissipative, we obtain the Lagrange identities \eqref{eq:Lagrange} and \eqref{eq:LagrangeStar} by choosing $\Gamma=(B^*B- I_{N_-})^{1/2}\Gamma_0$, $\Gamma_*=(BB^*- I_{N_+})^{1/2}\Gamma_1$, $E=\overline{\Ran\Gamma}$ and $E_*=\overline{\Ran\Gamma_*}$.
}

{Now, for $f=f_0+Bf_{-i}+f_{-i}\in D(L_B)$, we have
\bea
S(\lambda)(B^*B- I_{N_-})^{1/2}\Gamma_0 f  &=& S(\lambda)\Gamma f \nonumber\\
&=& \Gamma_* (L_B^*-\lambda)^{-1}(L_B-\lambda) (f_0+Bf_{-i}+f_{-i}) \nonumber\\
&=& (BB^*- I_{N_+})^{1/2}\Gamma_1 (f_0 + (L_B^*-\lambda)^{-1}((i-\lambda) Bf_{-i}- (i+\lambda)f_{-i}) \nonumber \\
&=& \frac{1}{\sqrt{2}}(BB^*- I_{N_+})^{1/2}\Gamma_1  (L_B^*-\lambda)^{-1}((i-\lambda) B\Gamma_0 f- (i+\lambda)\Gamma_0 f).
\eea
Since $\Gamma_0 u\in N_-$ is arbitrary, we get for any $f_{-i}\in N_-$ that
\be\label{eq:SRyz}
S(\lambda)(B^*B- I_{N_-})^{1/2} f_{-i} = \frac{1}{\sqrt{2}}(BB^*- I_{N_+})^{1/2}\Gamma_1 ( (L_B^*-\lambda)^{-1}((i-\lambda) Bf_{-i}- (i+\lambda)f_{-i}).
\ee
}

{
From now on, assume $f\in\ker(L^*-\lambda)$. Then 
$$(L-\lambda)f_0+(i-\lambda)f_{i}-(i+\lambda)f_{-i}=0$$
and so 
$$\Gamma_1  (L_B^*-\lambda)^{-1}((i-\lambda) f_{i}- (i+\lambda)f_{-i})=0.$$
This allows us to rewrite \eqref{eq:SRyz} as
\be\label{eq:SRyz2}
S(\lambda)(B^*B- I_{N_-})^{1/2} f_{-i} = \frac{i-\lambda}{\sqrt{2}}(BB^*- I_{N_+})^{1/2}\Gamma_1  (L_B^*-\lambda)^{-1}( Bf_{-i}- f_{i}).
\ee
Now let $\widetilde{M}(\lambda)$ be the $M$-function such that 
$$\widetilde{M}(\lambda)(\Gammat_1-B^*\Gammat_0)f=\Gammat_0 f\quad\hbox{ for }\quad f\in\ker(L^*-\lambda).$$
Then for such $f\in\ker(L^*-\lambda)$, we have
\be\label{eq:Mtilde}
\widetilde{M}(\lambda)(f_{-i}-B^*f_{i})=f_i.
\ee
Setting $w_+=Bf_{-i}- f_{i}\in N_+$ and $h=(L_B^*-\lambda)^{-1}w_+$, we get 
$$(L^*-\lambda)\left(h-\frac{w_+}{i-\lambda}\right)=0.$$
This implies 
$$\widetilde{M}(\lambda)\left(h_{-i}-B^*(h_{i}-\frac{w_+}{i-\lambda})\right)=h_i-\frac{w_+}{i-\lambda}.$$
Since $h\in D(L_B^*)$, we have $h_{-i}=B^*h_i$, and therefore we get 
$$h_i=(I+\widetilde{M}(\lambda)B^*)\frac{w_+}{i-\lambda}.$$
Inserting this in \eqref{eq:SRyz2}, we get
\be\label{eq:SRyz3}
S(\lambda)(B^*B- I_{N_-})^{1/2} f_{-i} =(BB^*- I_{N_+})^{1/2} (I+\widetilde{M}(\lambda)B^*)(Bf_{-i}- f_{i}).
\ee
From \eqref{eq:Mtilde}, we have that $(I+\widetilde{M}(\lambda)B^*)f_i=\widetilde{M}(\lambda)f_{-i}$, so
\bea
S(\lambda)(B^*B- I_{N_-})^{1/2} f_{-i} &=&(BB^*- I_{N_+})^{1/2} (B+\widetilde{M}(\lambda)B^*B-\widetilde{M}(\lambda))f_{-i}\\
&=&\left[B+(BB^*- I_{N_+})^{1/2}\widetilde{M}(\lambda)(B^*B- I_{N_-})^{1/2}\right] (B^*B- I_{N_-})^{1/2}f_{-i}.
\eea
Therefore, on $N_-$, we have
\be
S(\lambda) = B+(BB^*- I_{N_+})^{1/2}\widetilde{M}(\lambda)(B^*B- I_{N_-})^{1/2}.
\ee
\begin{remark}
\begin{enumerate}
	\item This gives a generalisation of Ryzhov's formula for the characteristic function to the case of different deficiency indices.
	\item If $L$ is not symmetric, the general formula will be much more complicated, as seen e.g.~in \eqref{SchrCharFn}.
	\item Rewriting the expressions in terms of the $M$-function $\widetilde{M}_\infty(\lambda)$ gives an easier expression. Note that $\widetilde{M}_\infty(\lambda)$ is a contraction. 
	\item Our choice of boundary triple here is not the `expected' one for an underlying symmetric operator, therefore our $M$-function is not Nevanlinna. Instead it is contractive and $L_B$ is dissipative iff $B$ is contractive.
	\item {If the deficiency indices coincide we are able to choose the $\Gamma$-operators to get Ryzhov's representation and have an $M$-function that is Nevanlinna. }
\end{enumerate}
\end{remark}}

\end{document}